\newtheorem{thm}{Theorem}[section]
\newtheorem{lem}[thm]{Lemma}
\newtheorem{prop}[thm]{Proposition}
\newtheorem{cor}[thm]{Corollary}
\newtheorem{oprob}[thm]{Open Problem}
\theoremstyle{definition}
\theoremstyle{definition}
\newtheorem{defn}[thm]{Definition}
\theoremstyle{remark}
\newtheorem{rem}[thm]{Remark}
\numberwithin{equation}{section}
\newcommand{\rmnum}[1]{\romannumeral #1}
\newcommand{\Rmnum}[1]{\expandafter\@slowromancap\romannumeral #1@}
\begin{document}

\title{Complete reducibility of subgroups of reductive algebraic groups over nonperfect fields \Rmnum{2}}
\author{Tomohiro Uchiyama\\
National Center for Theoretical Sciences, Mathematics Division\\
No.~1, Sec.~4, Roosevelt Rd., National Taiwan University, Taipei, Taiwan\\
\texttt{email:t.uchiyama2170@gmail.com}}
\date{}
\maketitle 

\begin{abstract}
Let $k$ be a separably closed field. Let $G$ be a reductive algebraic $k$-group. We study Serre's notion of complete reducibility of subgroups of $G$ over $k$. In particular, using the recently proved center conjecture of Tits, we show that the centralizer of a $k$-subgroup $H$ of $G$ is $G$-completely reducible over $k$ if it is reductive and $H$ is $G$-completely reducible over $k$. We show that a regular reductive $k$-subgroup of $G$ is $G$-completely reducible over $k$. We present examples where the number of overgroups of irreducible subgroups and the number of $G(k)$-conjugacy classes of $k$-anisotropic unipotent elements are infinite.
\end{abstract}

\noindent \textbf{Keywords:} algebraic groups, complete reducibility, pseudo-reductivity, spherical buildings
\section{Introduction} 
Let $k$ be an arbitrary field. Let $\overline k$ be an algebraic closure of $k$. Let $H$ be a (possibly non-connected) affine algebraic $k$-group, that is a (possibly non-connected) $\overline k$-defined affine algebraic group with a $k$-structure in the sense of Borel~\cite[AG.12.1]{Borel-AG-book}. We write $R_{u,k}(H)$ for the unique maximal smooth connected unipotent normal $k$-subgroup of $H$. An affine algebraic $k$-group $H$ is \emph{pseudo-reductive} if $R_{u,k}(H)=1$~\cite[Def.~1.1.1]{Conrad-pred-book}, and \emph{reductive} if the unipotent radical $R_u(H)=1$. 
Throughout, we write $G$ for a (possibly non-connected) reductive algebraic $k$-group.  Following Serre~\cite[Sec.~3]{Serre-building}, define
\begin{defn}\label{G-cr}
A (possibly non-$k$-defined) closed subgroup $H<G$ is \emph{$G$-completely reducible over $k$} ($G$-cr over $k$ for short) if whenever $H$ is contained in a $k$-defined $R$-parabolic subgroup $P$ of $G$, it is contained in some $k$-defined $R$-Levi subgroup of $P$. In particular, if $H$ is not contained in any $k$-defined proper $R$-parabolic subgroup, $H$ is \emph{$G$-irreducible} over $k$ ($G$-ir over $k$ for short).
\end{defn}
For the definition of $R$-parabolic subgroups and $R$-Levi subgroups, see Definition~\ref{non-connected-G-cr}. 
If $G$ is connected, $R$-parabolic subgroups and $R$-Levi subgroups are parabolic subgroups and Levi subgroups in the usual sense. Definition~\ref{G-cr} extends usual Serre's definition in the following sense: 1.~$H<G$ is not necessarily $k$-defined, 2.~$G$ is not necessarily connected. Definition~\ref{G-cr} was used in~\cite{Bate-cocharacter-Arx} and~\cite{Uchiyama-Nonperfect-pre}. By a subgroup of $G$, we always mean a closed subgroup of $G$. 

The notion of complete reducibility generalizes that of complete reducibility in representation theory, and it has been much studied. However most studies assume $k=\overline k$ and $G$ is connected; see~\cite{Bate-geometric-Inventione},\cite{Liebeck-Seitz-memoir},\cite{Stewart-nonGcr} for example. We say that $H<G$ is $G$-cr when it is $G$-cr over $\overline k$. Not much is known about complete reducibility over an arbitrary $k$ except a few general results and important examples in~\cite{Bate-cocharacter-Arx},~\cite{Bate-separable-Paris},~\cite[Sec.~7]{Bate-separability-TransAMS},~\cite{Bate-uniform-TransAMS},~\cite{Uchiyama-Nonperfect-pre},~\cite[Thm.~1.8]{Uchiyama-Classification-pre},~\cite[Sec.~4]{Uchiyama-Separability-JAlgebra}. 

Let $k_s$ be a separable closure of $k$. Recall that if $k$ is perfect, we have $k_s=\overline k$. The following result~\cite[Thm.~1.1]{Bate-separable-Paris} shows that if $k$ is perfect and $G$ is connected, most results in this paper just reduce to the algebraically closed case. 
\begin{prop}\label{separable}
Let $k$ be a field. Let $G$ be connected. Then a $k$-subgroup $H$ of $G$ is $G$-cr over $k$ if and only if $H$ is $G$-cr over $k_s$.
\end{prop}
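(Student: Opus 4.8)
The plan is to prove the two implications separately, with Galois descent as the common engine. Write $\Gamma=\mathrm{Gal}(k_s/k)$; it acts on $G(k_s)$ and hence on the $k_s$-defined $R$-parabolic and $R$-Levi subgroups of $G$. Since $H$ is a $k$-subgroup, $H(k_s)$ is $\Gamma$-stable, so $\Gamma$ permutes the $k_s$-parabolics containing $H$, and a $k_s$-defined subgroup is $k$-defined exactly when it is $\Gamma$-stable. I expect the implication ``$G$-cr over $k_s$ $\Rightarrow$ $G$-cr over $k$'' to be the routine one, and the reverse to carry the real content, because a $k_s$-parabolic containing $H$ need not be defined over $k$, so one cannot feed it directly into the hypothesis of $k$-complete reducibility.

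For the direction ``$G$-cr over $k_s$ $\Rightarrow$ $G$-cr over $k$'', let $P$ be a $k$-defined parabolic with $H\le P$; being $k$-defined it is $k_s$-defined, so the hypothesis yields a $k_s$-Levi $L$ of $P$ with $H\le L$, and the task is to descend $L$ to a $k$-Levi still containing $H$. Recall that $R_u(P)$ acts simply transitively by conjugation on the Levi subgroups of $P$, so these form a torsor under $R_u(P)$; relative to the basepoint $L$, a short computation with the quotient map $\pi\colon P\to P/R_u(P)$ shows that $uLu^{-1}\supseteq H$ if and only if $u\in C_{R_u(P)}(H)$. Hence the set $\mathcal S$ of Levis of $P$ containing $H$ is a nonempty, $\Gamma$-stable torsor under $C_{R_u(P)}(H)$, and a $k$-point of $\mathcal S$ is exactly a $k$-Levi of $P$ containing $H$. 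The obstruction to such a point lies in $H^1(\Gamma,C_{R_u(P)}(H))$, and I would kill it using that $R_u(P)$ is $k$-split unipotent, whose Galois cohomology vanishes by additive Hilbert~90; the delicate point to verify is that this vanishing persists for the centralizer $C_{R_u(P)}(H)$, where any non-smoothness coming from the imperfection of $k$ must be controlled.

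For the reverse implication I would induct on $\dim G$. Suppose $H$ is $G$-cr over $k$. If $H$ lies in some proper $k$-parabolic $P$, then $k$-complete reducibility puts $H$ in a proper $k$-Levi $L$ of $P$; by the rational analogue, due to Bate--Martin--R\"ohrle~\cite{Bate-cocharacter-Arx}, of the principle that $H$ is $G$-cr if and only if it is $L$-cr for a Levi $L\ni H$, we get that $H$ is $L$-cr over $k$, so by the inductive hypothesis ($\dim L<\dim G$) $H$ is $L$-cr over $k_s$, and the same principle over $k_s$ (as $L$ is a $k_s$-Levi of $G$) returns that $H$ is $G$-cr over $k_s$. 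This reduces everything to the crux case in which $H$ is $G$-irreducible over $k$, where no proper $k$-parabolic is available to lower the dimension.

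In the crux case I would argue geometrically via Serre's dictionary~\cite{Serre-building} between complete reducibility and the spherical building: $H$ is $G$-cr over a field $K$ exactly when its fixed-point subcomplex $\Delta_K^H$ of the $K$-building is a completely reducible (opposite-complete) convex subcomplex, and $H$ is $G$-ir over $k$ exactly when $\Delta_k^H$ is empty. Assuming, for contradiction, that $H$ is not $G$-cr over $k_s$, the subcomplex $\Delta_{k_s}^H$ is convex but not opposite-complete, so by the center conjecture of Tits it has a center, a simplex fixed by every building automorphism stabilizing it. Since $\Gamma$ stabilizes $\Delta_{k_s}^H$ (because $H$ is a $k$-subgroup), this center is $\Gamma$-fixed, and a $\Gamma$-fixed simplex corresponds by descent to a proper $k$-parabolic containing $H$, contradicting $G$-irreducibility over $k$. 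The main obstacle is precisely this crux: one must make the reduction to $G$-irreducibility clean and then apply the center conjecture in its automorphism-equivariant form, checking that the automorphisms of $\Delta_{k_s}$ induced by $\Gamma$ genuinely stabilize $\Delta_{k_s}^H$ and that a $\Gamma$-fixed simplex descends to a $k$-rational parabolic.
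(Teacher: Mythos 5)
The paper does not actually prove Proposition~\ref{separable}: it is imported from \cite[Thm.~1.1]{Bate-separable-Paris}, so your proposal has to be measured against that proof. The half you identify as the crux is handled correctly, and by essentially the published route: induct on $\dim G$, use the rational Levi-reduction lemma to dispose of the case where $H$ lies in a proper $k$-parabolic, and in the $G$-irreducible case apply the centre conjecture to the convex contractible complex $\Delta(G/k_s)^H$. Since $H$ is $k$-defined, $\Gamma=\mathrm{Gal}(k_s/k)$ stabilizes this complex (density of $H(k_s)$ in $H$, \cite[AG.13.3]{Borel-AG-book}), so the centre simplex is $\Gamma$-fixed and the corresponding proper $k_s$-parabolic is $\Gamma$-stable, hence $k$-defined by \cite[Prop.~14.2]{Borel-AG-book}, contradicting $G$-irreducibility over $k$; this is the same mechanism that the present paper packages as Theorem~\ref{TCCgroup} and Proposition~\ref{handy}. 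Your torsor computation in the other half is also correct: by \cite[Lem.~2.5]{Bate-uniform-TransAMS} together with your projection argument, the $k_s$-defined Levi subgroups of $P$ containing $H$ form a $\Gamma$-stable torsor under $C_{R_u(P)(k_s)}(H)$.

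The genuine gap is the step you yourself flag, and it sits in the implication you predicted would be routine. Triviality of the class in $H^1\bigl(\Gamma,C_{R_u(P)(k_s)}(H)\bigr)$ is not a ``delicate point to verify''; it is the entire content of that direction, and it cannot be obtained from vanishing of Galois cohomology of $k$-split unipotent groups, because in characteristic $p$ splitness is not inherited by subgroups. Over nonperfect $k$ the centralizer $C_{R_u(P)}(H)$ need not be $k$-defined (so your $H^1$ is not even the cohomology of an algebraic $k$-group), need not be smooth or connected, and smooth connected unipotent subgroups of split groups can be $k$-wound, with nonvanishing $H^1$. Indeed the general principle your argument needs --- for a $k$-split unipotent $U$ acting $k$-morphically on an affine $k$-variety, two $k$-points in one $U(k_s)$-orbit lie in one $U(k)$-orbit --- is false: take $k=\mathbb{F}_p(t)$ and let $U=\mathbb{G}_a$ act on $\mathbb{A}^1$ by $u\cdot x=x+u^p-tu$. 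Then $0$ and $t$ lie in one $U(k_s)$-orbit, because $u^p-tu-t$ is separable in $u$; but they lie in different $U(k)$-orbits, since clearing denominators in $u^p-tu=t$ forces $u\in\mathbb{F}_p[t]$ and then $u^p=t(u+1)$ fails by comparing degrees (and coefficients when $p=2$). The obstruction is a nontrivial $\Gamma$-torsor under the $k_s$-points of the order-$p$ \'etale subgroup $\{u\in\mathbb{G}_a\mid u^p=tu\}$ of the split group $U$ --- exactly the shape of object you would need to show is always trivial. (In characteristic zero your argument does close, since there $C_{R_u(P)}(H)$ is automatically smooth, connected and split; but positive characteristic, in particular nonperfect $k$, is what this paper is about.) So any correct proof of this direction must exploit structure specific to Levi subgroups and centralizers rather than general properties of split unipotent groups; that is precisely where the technical work of \cite{Bate-separable-Paris}, and of the later Bate--Herpel--Martin--R\"ohrle papers on cocharacter-closed orbits, is invested.
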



We write $G(k)$ for the set of $k$-points of $G$. For $H<G$, we write $H(k):=G(k)\cap H$. By $\overline H$, we mean the Zariski closure of $H$. We write $C_G(H)$ for the set-theoretic centralizer of $H$ in $G$. Centralizers of subgroups of $G$ are important to understand the subgroup structure of $G$~\cite{Bala-classes-Cam},~\cite{Bala-classes-Cam2}~\cite{Liebeck-Seitz-unipotent},~\cite{SpringerSteinberg-book}. 
Recall the following~\cite[Prop.~3.12, Cor.~3.17]{Bate-geometric-Inventione}: 
\begin{prop}\label{algclocentralizer}
Let $k=\overline{k}$. Suppose that a subgroup $H$ of $G$ is $G$-cr. Then $C_G(H)$ is reductive, and moreover it is $G$-cr. 
\end{prop}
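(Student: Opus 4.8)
The plan is to work throughout with the cocharacter description of $R$-parabolic and $R$-Levi subgroups: for a cocharacter $\lambda$ of $G$ we have $P_\lambda$, its unipotent radical $R_u(P_\lambda)$, the $R$-Levi subgroup $L_\lambda=C_G(\lambda(\mathbb{G}_m))$, and the limit map $c_\lambda\colon P_\lambda\to L_\lambda$, $g\mapsto \lim_{t\to 0}\lambda(t)g\lambda(t)^{-1}$. Since $k=\overline{k}$ here, I will use the standard instability and orbit machinery freely, and prove the two assertions in turn: first that $C_G(H)$ is reductive, and then, using this, that it is $G$-cr.

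For reductivity, write $M=C_G(H)$ and note that $H$ centralizes $M$, since $H\subseteq C_G(C_G(H))$; in particular $H$ centralizes the characteristic subgroup $U:=R_u(M)$. The heart of the argument is the $G$-irreducible case: if $H$ is $G$-ir and $U\neq 1$, then applying the Borel--Tits construction to the nontrivial connected unipotent group $U$ produces a proper $R$-parabolic $P$ of $G$ with $U\subseteq R_u(P)$ and $N_G(U)\subseteq P$; since $H\subseteq C_G(U)\subseteq N_G(U)\subseteq P$, this contradicts $G$-irreducibility, forcing $U=1$. To reduce the general $G$-cr case to this one, I would pass to the $R$-Levi subgroup $L=C_G(S)$ for $S$ a maximal torus of $M$: then $H\subseteq L$, $H$ is $L$-ir (using that $G$-cr implies $L$-cr and that $S$ is maximal in $C_G(H)$), and $S=Z(L)^\circ$. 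The delicate point is that the irreducible case only controls $C_L(H)=C_M(S)$, not $M$ itself. I expect this to be the main obstacle. I would circumvent it by realizing $M$ as the stabilizer of a point lying on a closed $G$-orbit---choosing topological generators of $H$ and using the Kempf--Hesselink--Rousseau translation of $G$-complete reducibility into closedness of the simultaneous-conjugation orbit---and then invoking Richardson's theorem that the stabilizer of a point on a closed orbit of a reductive group is reductive. In positive characteristic, where Matsushima's criterion is unavailable, this is precisely the step demanding the most care.

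For the second assertion, suppose $M=C_G(H)$, now known to be reductive, is contained in an $R$-parabolic $P_\lambda$; I must exhibit an $R$-Levi subgroup of $P_\lambda$ containing $M$, equivalently find $u\in R_u(P_\lambda)$ with $c_\lambda|_M=\mathrm{Int}(u)|_M$. Reductivity of $M$ forces $M\cap R_u(P_\lambda)$ to be finite, so $c_\lambda\colon M\to L_\lambda$ is an isogeny onto its image, and the remaining task is to realize this map by a single conjugation. Here I would exploit the commuting relation $[H,M]=1$ together with the $G$-complete reducibility of $H$: the closed-orbit description of $M$ from the first part shows that its own conjugation orbit is closed, and the reciprocity between $H$ and $C_G(H)$ then matches the limit $c_\lambda(M)$ with a genuine $R_u(P_\lambda)$-conjugate of $M$. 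Packaging this reciprocity cleanly, and verifying the separability needed for the cocharacter criterion to apply, is the technical crux of this half; as in the first part, the positive-characteristic case is where I anticipate the real work.
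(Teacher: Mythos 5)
Your first half (reductivity) is essentially right, and it is in fact the route taken by the source this paper quotes for Proposition~\ref{algclocentralizer} (Bate--Martin--R\"ohrle, who the paper cites rather than reproving): $H$ being $G$-cr is equivalent to strong reductivity, hence to closedness of the orbit $G\cdot\mathbf{h}$ of a generating tuple, and then the characteristic-free Matsushima--Richardson theorem (via Haboush) gives reductivity of the stabilizer $M=C_G(H)$; you also correctly diagnose that the detour through $L=C_G(S)$ only controls $C_L(H)$ and abandon it. The inseparability of the orbit map in characteristic $p$, which you flag, is a real but standard point.

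The second half has a genuine gap. The load-bearing sentence ``the closed-orbit description of $M$ from the first part shows that its own conjugation orbit is closed'' is either circular or incomplete. If you mean the $G$-orbit of a generating tuple $\mathbf{m}$ of $M$: its closedness is, by the very dictionary you are using, \emph{equivalent} to ``$M$ is $G$-cr'', so you are asserting the conclusion; nothing in the first part yields it, since there $\mathbf{m}$ plays no role and $M$ enters only as a stabilizer. If you mean the $M$-orbit $M\cdot\mathbf{m}$ inside $M^{l}$: that is indeed closed (Richardson applied inside the reductive group $M$, which is trivially $M$-ir), but it is only the first step of the actual argument, whose remaining ingredients are absent from your proposal: the fibration lemma (if $G\cdot\mathbf{h}$ is closed, then $G\cdot(\mathbf{h},\mathbf{m})$ is closed if and only if $M\cdot\mathbf{m}$ is), which gives that $H\,C_G(H)$ is $G$-cr, and then the normal-subgroup theorem (a normal subgroup of a $G$-cr group is $G$-cr, applied to $C_G(H)\trianglelefteq H\,C_G(H)$). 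Moreover, the portion of your second half that you do prove --- finiteness of $M\cap R_u(P_\lambda)$, so that $c_\lambda|_M$ is an isogeny onto its image --- carries no force by itself: for any \emph{connected} reductive $M\subseteq P_\lambda$ one automatically has $M\cap R_u(P_\lambda)=1$, yet in positive characteristic there exist connected reductive subgroups lying in a parabolic but in no Levi of it, so injectivity of $c_\lambda|_M$ can never produce your element $u$; the hypothesis on $H$ must enter, and in your text it never actually does (``reciprocity'' is not an argument). Compare with this paper's proof of its rational analogue, Theorem~\ref{main}: there one first takes a parabolic containing $H C$, uses $G$-cr of $H$ to arrange $P=P_{\lambda'}$ with $H\subseteq L_{\lambda'}$, so that $\lambda'$ is a cocharacter of $C$ and the (pseudo-)reductivity of $C$ kills $C\cap R_u(P)$, and only then descends from $HC$ to $C$ by normality --- exactly the two steps (passage through $HC_G(H)$, then the normal-subgroup theorem) missing from your proposal.
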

Note that any $G$-cr subgroup of $G$ is reductive~\cite[Prop.~4.1]{Serre-building}. It is natural to ask (cf.~\cite[Open Problem~1.13]{Uchiyama-Nonperfect-pre}):
\begin{oprob}\label{centralizerquestion}
Let $k$ be a field. Suppose that a $k$-subgroup $H$ of $G$ is $G$-cr over $k$. Is $C_G(H)$ $G$-cr over $k$? Is $\overline{C_G(H)(k_s)}$ $G$-cr over $k$? 
\end{oprob} 
Even if $H$ is $k$-defined, $C_G(H)$ is not necessarily $k$-defined; see~\cite[Theorem~1.2]{Uchiyama-Nonperfect-pre} for examples of non-$k$-defined $C_G(H)$. See~Lemma~\ref{centralizerkdefined} and~\cite[Prop.~7.4]{Bate-cocharacter-Arx} for some $k$-definability criteria for $C_G(H)$. Let $\Gamma:=\textup{Gal}(k_s/k)=\textup{Gal}(\overline{k}/k)$. Note that $\overline{C_G(H)(k_s)}$ is the unique maximal $k$-defined subgroup of $C_G(H)$; it is $k$-defined by~\cite[Prop.~14.2]{Borel-AG-book} since it is $k_s$-defined and $\Gamma$-stable. Also note that the slightly different notation $\overline{C_{G(k_s)}(H)}$ was used in~\cite[Lem.~C.4.1]{Conrad-pred-book} for $\overline{C_G(H)(k_s)}$ (they are the same, we follow the notation in~\cite{Bate-cocharacter-Arx}). Our principal result is the following.

\begin{thm}\label{main}
Let $k=k_s$. Let $G$ be connected. Suppose that a $k$-subgroup $H$ of $G$ is $G$-cr over $k$.  
\begin{enumerate}
\item{If $\overline{C_G(H)(k)}$ is pseudo-reductive, then it is $G$-cr over $k$,}
\item{If $C_G(H)$ is reductive, then it is $G$-cr over $k$.}
\end{enumerate}
\end{thm}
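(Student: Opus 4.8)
The plan is to derive part (2) from part (1) and to prove part (1) by induction on $\dim G$. For the reduction, observe that a reductive $C_G(H)$ is smooth and connected, so over the separably closed field $k$ it has dense $k$-points; hence $\overline{C_G(H)(k)}=C_G(H)$, and being reductive it is a fortiori pseudo-reductive. Thus (2) is precisely the instance of (1) in which $D:=\overline{C_G(H)(k)}$ equals $C_G(H)$, and it suffices to prove that the maximal $k$-defined subgroup $D$ of $C_G(H)$ is $G$-cr over $k$ whenever it is pseudo-reductive. So I fix such an $H$ and $D$, assume inductively that the statement holds for all connected reductive $k$-groups of dimension $<\dim G$, and suppose for contradiction that $D$ is not $G$-cr over $k$.

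I pass to the spherical building $\Delta=\Delta_k(G)$ whose simplices are the proper $k$-parabolic subgroups of $G$; this is a genuine thick spherical building because $G$ is split over $k=k_s$. By Serre's building-theoretic reformulation \cite{Serre-building} of Definition~\ref{G-cr}, a $k$-subgroup is $G$-cr over $k$ if and only if its fixed-point subcomplex in $\Delta$ is completely reducible, i.e.\ every simplex it contains has an opposite simplex lying in it. Since $D$ is not $G$-cr, the convex subcomplex $\Sigma:=\Delta^{D}$ is nonempty and fails to be completely reducible. The engine of the argument is the Tits center conjecture, now a theorem: applied to the non-completely-reducible convex subcomplex $\Sigma$, it furnishes a simplex $\xi\in\Sigma$ fixed by every automorphism of $\Delta$ that stabilises $\Sigma$.

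Now $H$ normalises $C_G(H)$, hence its canonical maximal $k$-defined subgroup $D$, so $H\subseteq N_G(D)$; since $D\trianglelefteq N_G(D)$, conjugation by $N_G(D)$ stabilises $\Sigma=\Delta^{D}$, and therefore $N_G(D)$---in particular both $D$ and $H$---fixes $\xi$. Writing $P_\xi$ for the proper $k$-parabolic attached to $\xi$, this gives $H\subseteq P_\xi$ and $D\subseteq P_\xi$. Because $H$ is $G$-cr over $k$, it lies in some $k$-Levi subgroup $L_\xi$ of $P_\xi$, and as $D$ centralises $H$ we obtain $D\subseteq C_{P_\xi}(H)=C_{R_u(P_\xi)}(H)\rtimes C_{L_\xi}(H)$, the decomposition being valid since $H\subseteq L_\xi$. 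Projecting along this semidirect product sends $D$ into $L_\xi$ with unipotent kernel, and the crux is to show this kernel is trivial: then $D\subseteq L_\xi$, every $k$-point of $C_G(H)$ lands in $L_\xi$, and $D=\overline{C_{L_\xi}(H)(k)}$.

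Granting $D\subseteq L_\xi$, the induction closes: $L_\xi$ is a $k$-Levi subgroup of $G$ with $\dim L_\xi<\dim G$; the $k$-rational Levi correspondence for complete reducibility makes $H$ $L_\xi$-cr over $k$; and $\overline{C_{L_\xi}(H)(k)}=D$ is pseudo-reductive, so the inductive hypothesis yields that $D$ is $L_\xi$-cr over $k$, whence the Levi correspondence returns that $D$ is $G$-cr over $k$---contradicting our assumption. I expect the \textbf{main obstacle} to be exactly the triviality of the unipotent kernel in the previous step, which is where the imperfection of $k$ bites: pseudo-reductivity ($R_{u,k}(D)=1$) kills only the \emph{smooth connected} normal unipotent $k$-subgroups of $D$, so one must separately rule out non-smooth or finite normal unipotent subgroup schemes of $D$ inside $R_u(P_\xi)$ in order to force $D$ into $L_\xi$. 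A secondary, more routine burden is to justify that $\Sigma=\Delta^{D}$ is convex, so that the center conjecture applies, and that the center $\xi$ is stabilised by $N_G(D)$ as an algebraic group, so that $H\subseteq P_\xi$ holds scheme-theoretically and not merely on $k$-points.
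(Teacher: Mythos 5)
Your proposal contains two genuine gaps, one fatal and one acknowledged but unresolved. The fatal one is the reduction of part~(2) to part~(1). You claim that since $C_G(H)$ is reductive, hence smooth and connected, it has dense $k$-points over $k=k_s$, so $\overline{C_G(H)(k)}=C_G(H)$. But density of $k_s$-points (\cite[AG.13.3]{Borel-AG-book}) applies only to groups \emph{defined over} $k$, and $C_G(H)$ need not be $k$-defined even when $H$ is $k$-defined --- the paper emphasizes exactly this point and cites examples of non-$k$-defined centralizers. Reductivity is a geometric condition over $\overline k$ and gives no control over the field of definition (nor does it give connectedness: reductive centralizers are frequently disconnected). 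This is precisely why the paper treats part~(2) by a separate argument: it first shows, by the same computation as in part~(1) (with reductivity in place of pseudo-reductivity), that $HC_G(H)$ is $G$-cr over $k$; it then takes a \emph{minimal} $k$-parabolic subgroup $P'$ containing $HC_G(H)$ and a $k$-Levi subgroup $L'$ of $P'$ containing $HC_G(H)$, and argues that if $C_G(H)$ were not $L'$-cr over $k$, then Proposition~\ref{handy} (the center-conjecture consequence, applied inside $L'$ with the $k$-defined group $H$ normalizing $C_G(H)$) would give a proper $k$-parabolic subgroup $P_{L'}$ of $L'$ containing $H$ and $C_G(H)$, whence $P_{L'}\ltimes R_u(P')$ (a $k$-parabolic subgroup of $G$ by \cite[Prop.~4.4(c)]{Borel-Tits-Groupes-reductifs}) properly refines $P'$, contradicting minimality; the Levi correspondence \cite[Lem.~3.4]{Uchiyama-Nonperfect-pre} then transfers $L'$-cr over $k$ back to $G$-cr over $k$. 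In short, the entire difficulty of part~(2) is the possible failure of $k$-definedness of $C_G(H)$, which your reduction assumes away.

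The second gap is in part~(1): the step you yourself flag as the main obstacle --- triviality of the kernel of the projection of $D=\overline{C_G(H)(k)}$ to the Levi --- is indeed the crux, and you leave it unproven. The paper closes it as follows: with $P=P_\lambda$, $L=L_\lambda$ a $k$-Levi containing $H$, $\lambda\in Y_k(G)$, the image of $\lambda$ centralizes $H$ and, being $k$-defined, lies in $C:=\overline{C_G(H)(k)}$; then $C=(C\cap L)(C\cap R_u(P))$ with $C\cap R_u(P)$ being $k$-defined by \cite[Prop.~2.2]{Bate-cocharacter-Arx}, and for each $u\in (C\cap R_u(P))(k)$ the $k$-morphism $\phi_u(t)=\lambda(t)u\lambda(t)^{-1}$, extended by $\phi_u(0)=1$, traces a connected $k$-subvariety of $C\cap R_u(P)$ through $1$ and $u$. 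These curves generate a smooth connected unipotent normal $k$-subgroup of $C$, which pseudo-reductivity forces to be trivial; hence the $k$-points of $C\cap R_u(P)$ are trivial and, since $C$ is by definition the closure of its $k$-points, $C<L$. This also disposes of your worry about non-smooth or finite unipotent subgroup schemes: because $D$ is the closure of its $k$-points, it suffices to kill the $k$-points of the kernel, and every such point lies on one of these smooth curves. Finally, note that once this step is in place, neither the center conjecture nor your induction on $\dim G$ is needed for part~(1): the argument applies to an \emph{arbitrary} $k$-parabolic $P$ containing $HC$, so $HC$ is $G$-cr over $k$ directly, and then $C$, being normal in the $k$-defined group $HC$, is $G$-cr over $k$ by \cite[Prop.~3.5]{Uchiyama-Nonperfect-pre}; the paper reserves the center-conjecture machinery (Proposition~\ref{handy}) for part~(2), where $k$-definedness genuinely fails.
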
 

Recall the following~\cite[Prop.~1.14]{Uchiyama-Nonperfect-pre}:
\begin{prop}\label{TCC-centralizer}
Let $k=k_s$. Let $G$ be connected. Let $H$ be a $k$-subgroup of $G$. If $H$ is $G$-ir over $k$, then $C_G(H)$ is $G$-cr over $k$. 
\end{prop}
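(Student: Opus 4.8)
The plan is to translate the statement into the language of the spherical Tits building of $G$ over $k$ and then invoke the recently proved center conjecture of Tits. Write $\Delta$ for the building whose simplices are the proper $k$-defined parabolic subgroups of $G$ ordered by reverse inclusion, so that chambers correspond to minimal $k$-parabolics and $G(k)$ acts by conjugation. For a closed subgroup $D<G$ let $\Delta_D$ be the subcomplex of simplices $\sigma$ with $D\le P_\sigma$. I would first recall two standard facts, both valid over the separably closed field $k$: following Serre, $\Delta_D$ is a \emph{convex} subcomplex of $\Delta$; and $D$ is $G$-cr over $k$ if and only if $\Delta_D$ is \emph{completely reducible} as a subcomplex, i.e.\ every $\sigma\in\Delta_D$ admits an opposite simplex lying in $\Delta_D$. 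The second equivalence holds because an opposite $\sigma'\in\Delta_D$ yields a parabolic $Q$ opposite to $P_\sigma$ with $D\le Q$, and then $L=P_\sigma\cap Q$ is a $k$-Levi subgroup of $P_\sigma$ containing $D$.

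Set $C:=C_G(H)$ and $\Sigma:=\Delta_C$. The structural observation driving the argument is that $H$ acts on $\Sigma$: since $H$ centralizes $C$, for $h\in H(k)$ and any $\sigma\in\Sigma$ we have $h P_\sigma h^{-1}\supseteq h C h^{-1}=C$, so conjugation by $H(k)$ stabilizes $\Sigma$. Thus the image of $H(k)$ in $\mathrm{Aut}(\Delta)$ lands in the stabilizer $\mathrm{Aut}(\Delta)_\Sigma$. I would then argue by contradiction: suppose $C$ is \emph{not} $G$-cr over $k$, equivalently $\Sigma$ is not completely reducible. By the center conjecture of Tits, a convex subcomplex of a spherical building that is not completely reducible has a simplex $\sigma^*\in\Sigma$ fixed by the whole of $\mathrm{Aut}(\Delta)_\Sigma$. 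In particular $\sigma^*$ is fixed by the image of $H(k)$, which means $h P_{\sigma^*} h^{-1}=P_{\sigma^*}$ for every $h\in H(k)$, that is, $H(k)\le N_G(P_{\sigma^*})(k)=P_{\sigma^*}(k)$.

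To finish I would promote this to the scheme-theoretic containment $H\le P_{\sigma^*}$. Since $k=k_s$ is separably closed and $H$ is smooth, $H(k)$ is Zariski dense in $H$, so $H\le\overline{H(k)}\le\overline{P_{\sigma^*}(k)}=P_{\sigma^*}$, the last equality because $P_{\sigma^*}$ is a smooth closed $k$-subgroup. But $\sigma^*\in\Sigma\subseteq\Delta$, so $P_{\sigma^*}$ is a \emph{proper} $k$-defined parabolic subgroup of $G$ containing $H$, contradicting the hypothesis that $H$ is $G$-ir over $k$. Hence $\Sigma$ must be completely reducible, and therefore $C=C_G(H)$ is $G$-cr over $k$, as required.

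The main obstacle I expect lies in setting up the $k$-rational dictionary correctly and in the final translation step, rather than in the geometric input: over a nonperfect base one must verify that the $k$-building, the correspondence between opposite simplices and $k$-Levi subgroups, and the convexity of $\Delta_C$ all behave as in the algebraically closed case, and that the passage from the building-theoretic fixed point $\sigma^*$ to the algebraic containment $H\le P_{\sigma^*}$ is legitimate. This is precisely where the hypothesis $k=k_s$ is essential, since it removes Galois-descent obstructions and guarantees the density of $H(k)$ in $H$ needed for the last inequality; in contrast the passage to general $k$ would require Proposition~\ref{separable} or an additional descent argument.
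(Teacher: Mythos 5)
Your proof is correct and takes essentially the same route as the paper: what you spell out is exactly the center-conjecture fixed-point mechanism that the paper packages as Theorem~\ref{TCCgroup} and Proposition~\ref{handy}, applied with $C_G(H)$ as the (possibly non-$k$-defined) non-$G$-cr subgroup and $H$ as the normalizing $k$-subgroup, followed by density of $k_s$-points to pass from $H(k)\le P_{\sigma^*}$ to $H\le P_{\sigma^*}$ and contradict $G$-irreducibility. The only difference is presentational: you inline the building-theoretic argument (convexity of $\Delta_C$, Serre's dictionary between opposite simplices and $k$-Levi subgroups, self-normalization of parabolics) instead of citing the packaged statements.
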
 
Theorem~\ref{main} and Proposition~\ref{TCC-centralizer} give a partial affirmative answer to Open Problem~\ref{centralizerquestion}. However, in~\cite[Rem.~3.9]{Uchiyama-Nonperfect-pre}, it was shown that there exists a $k$-subgroup $H$ of $G$ with the following properties: 1.~$H$ is $G$-cr over $k$, 2.~$C_G(H)$ is $k$-defined, 3.~$C_G(H)$ is not pseudo-reductive. This result suggests a negative answer to Open problem~\ref{centralizerquestion} since reductivity of $C_G(H)$ was crucial to show that $C_G(H)$ is $G$-cr in the proof of Proposition~\ref{algclocentralizer}.

Although we cannot solve Open problem~\ref{centralizerquestion} completely, using various group theoretic arguments we extend a number of results in~\cite{Bate-geometric-Inventione} on the structure of $G$-cr subgroups to a separably closed $k$, see~Theorems~\ref{stronglyreductive} and~\ref{regular} for example. 

Note that in Propositions~\ref{algclocentralizer},~\ref{TCC-centralizer} and Theorems~\ref{main},~\ref{regular} we assume $G$ to be connected. This is because the proofs of these results depend on the following (Theorem~\ref{TCCgroup}) that is a consequence of the recently proved \emph{center conjecture of Tits}~\cite{Leeb-Ramos-TCC-GFA},~\cite{Muhlherr-Tits-TCC-JAlgebra},~\cite{Ramos-centerconj-Geo},~\cite[Sec.~2.4]{Serre-building},~\cite[Lem.~1.2]{Tits-colloq},~\cite[Sec.~3.1]{Uchiyama-Nonperfect-pre}.  


\begin{thm}\label{TCCgroup}
Let $k$ be a field. Let $G$ be connected. Let $\Delta(G)$ be the spherical building of $G$. Let $H$ be a (possibly non-$k$-defined) subgroup of $G$ that is not $G$-cr over $k$. Let $\Delta(G)^H$ be the fixed point subcomplex of $\Delta(G)$. Then there exists a simplex in $\Delta(G)^H$ that is fixed by all building automorphisms of $\Delta(G)$ stabilizing $\Delta(G)^H$. 
\end{thm}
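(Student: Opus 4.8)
The plan is to translate the group-theoretic hypothesis into the combinatorics of the spherical building and then invoke the center conjecture of Tits directly. Recall that the simplices of $\Delta(G)$ correspond to the proper $k$-defined $R$-parabolic subgroups of $G$, and that $H$ acts on $\Delta(G)$ by conjugation. A simplex $\sigma$ is fixed by $H$ precisely when $H$ normalizes the corresponding $R$-parabolic $P_\sigma$, equivalently $H\subseteq P_\sigma$; thus $\Delta(G)^H$ is the subcomplex whose simplices are the $R$-parabolics containing $H$. Since $H$ is assumed not to be $G$-cr over $k$, it cannot be $G$-ir over $k$ (a $G$-ir subgroup is vacuously $G$-cr over $k$), so $H$ lies in some proper $k$-defined $R$-parabolic and $\Delta(G)^H$ is nonempty.

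The first substantive step is to record that $\Delta(G)^H$ is a \emph{convex} subcomplex of $\Delta(G)$, since convexity is exactly the hypothesis required to apply the center conjecture. This is the structural input coming from Serre's analysis of fixed-point subcomplexes~\cite{Serre-building}: I would verify that the description of $\Delta(G)^H$ as the set of $R$-parabolics containing $H$ is stable under passing to the convex hull of any pair of its simplices. The second step is to invoke Serre's dictionary between complete reducibility and the building~\cite{Serre-building},~\cite{Uchiyama-Nonperfect-pre}: $H$ is $G$-cr over $k$ if and only if $\Delta(G)^H$ is a \emph{completely reducible} subcomplex, meaning every simplex of $\Delta(G)^H$ admits an opposite simplex lying in $\Delta(G)^H$. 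Because $H$ is not $G$-cr over $k$ by hypothesis, $\Delta(G)^H$ is therefore a nonempty convex subcomplex that fails to be completely reducible.

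With these two facts in hand, the conclusion follows immediately from the center conjecture of Tits, now a theorem~\cite{Muhlherr-Tits-TCC-JAlgebra},~\cite{Leeb-Ramos-TCC-GFA},~\cite{Ramos-centerconj-Geo}. Applied to the nonempty convex non-completely-reducible subcomplex $\Delta(G)^H$ of the spherical building $\Delta(G)$, the theorem produces a simplex of $\Delta(G)^H$---its center---that is fixed by every building automorphism of $\Delta(G)$ stabilizing $\Delta(G)^H$. This is exactly the desired statement.

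The hard part will not be the final application but the faithful transfer of the classical building dictionary to the present generality: here $H$ need not be $k$-defined, the relevant objects are $R$-parabolic rather than ordinary parabolic subgroups, and $k$ is only separably closed rather than algebraically closed. I would therefore check carefully that the assignment $\sigma\mapsto P_\sigma$ together with the conjugation action behaves as in the split case, that the characterization of $G$-cr over $k$ in terms of completely reducible subcomplexes remains valid for $R$-parabolics over such a $k$, and that the convexity of $\Delta(G)^H$ survives in this setting. Once these compatibilities are in place, the center conjecture supplies the center simplex with no further modification.
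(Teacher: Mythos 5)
Your proposal follows the paper's own route exactly: the paper also derives this theorem by identifying $\Delta(G)^H$ with the subcomplex of proper $k$-parabolic subgroups containing $H$, establishing that it is a nonempty convex subcomplex which fails to be completely reducible precisely because $H$ is not $G$-cr over $k$ (Serre's dictionary), and then applying the proved center conjecture of Tits; the paper simply delegates these ingredients to citations (Serre, Tits, and Sec.~3.1 of the author's earlier paper) rather than writing them out. The compatibility checks you flag --- convexity of $\Delta(G)^H$ when $H$ is not $k$-defined and the validity of the building dictionary over a general $k$ --- are exactly the points the paper entrusts to those references, so your outline is correct and essentially identical to the paper's argument.
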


Recall that each simplex of $\Delta(G)$ is identified with a proper $k$-parabolic subgroup of $G$~\cite[Thm.~5.2]{Tits-book}, and $\Delta(G)^H$ is identified with the set of $k$-parabolic subgroups containing $H$~\cite[Sec.~2]{Serre-building}. Now let $G$ be non-connected reductive. Note that $\Delta(G)=\Delta(G^{\circ})$ by definition. Let $\Lambda(G)$ be the set of $k$-defined $R$-parabolic subgroups of $G$. Let $\Lambda(G)^{H}$ be the set of $k$-defined $R$-parabolic subgroups of $G$ containing $H$. 
\begin{thm}\label{TCCnotsubset}
Let $k$ be a field. Let $\tilde G=SL_3$. Let $G=\tilde G\rtimes \langle \sigma \rangle$ where $\sigma$ is the non-trivial graph automorphism of $\tilde G$. Then there exists a $k$-subgroup $H$ of $G$ such that $H$ is not $G$-cr over $k$ and $\Lambda(G)^H$ is not a subset of $\Delta(G)$. Moreover, $\Lambda(G)$ ordered by reversed inclusion does not form a simplicial complex in the sense of~\cite[Thm.~5.2]{Tits-book}. 
\end{thm}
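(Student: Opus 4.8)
The plan is to exhibit an explicit $H$ inside $G = SL_3 \rtimes \langle\sigma\rangle$ whose fixed-point data in the building behaves pathologically because $\sigma$ swaps the two ends of the $A_2$ Dynkin diagram. First I would recall the structure of $\Delta(SL_3)$: it is the incidence graph of the projective plane $\mathbb{P}^2(k)$, with vertices the proper nonzero $k$-subspaces (lines and planes) of $k^3$, and edges the incident flags; equivalently, vertices are maximal proper $k$-parabolics of two types (point-stabilizers and line-stabilizers), and the graph automorphism $\sigma$ interchanges these two vertex types. The key structural observation is that an $R$-parabolic subgroup $P$ of the non-connected $G$ is determined by a cocharacter $\lambda$ via $P = P_\lambda(G)$, and $P$ is a genuine parabolic of $G^\circ = SL_3$ exactly when $P$ meets the non-identity coset $\tilde G\sigma$; otherwise $P\cap G^\circ = P_\lambda(SL_3)$ but $P$ itself sits outside $\Delta(G)=\Delta(SL_3)$.

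Next I would choose $H$ to be generated by a suitably chosen element of the outer coset $\tilde G\sigma$ (for instance $\sigma$ itself, or $\sigma$ times a well-chosen unipotent or semisimple element), so that $H$ normalizes some cocharacter classes but, because of the diagram flip, cannot be conjugated into a Levi of any $R$-parabolic it lies in. Concretely I would verify three things: (i) $H$ is $k$-defined; (ii) $H$ is contained in some $k$-defined $R$-parabolic $P$ of $G$ but in no $k$-defined $R$-Levi subgroup of $P$, which shows $H$ is not $G$-cr over $k$ — here I would invoke the non-connected analogue of $R$-parabolics and the fact that $\langle\sigma\rangle$ forces any $\lambda$ fixed by $H$ to be $\sigma$-stable, pinning down the parabolics containing $H$; and (iii) among the $R$-parabolics in $\Lambda(G)^H$ there is at least one $P = P_\lambda(G)$ for which $P_\lambda(SL_3) = P \cap G^\circ$ but $P\notin\Delta(SL_3)$ because $\lambda$ (or its $G$-conjugate) is genuinely a cocharacter of the non-connected group. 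This last point is what makes $\Lambda(G)^H \not\subseteq \Delta(G)$: some member of $\Lambda(G)^H$ is not a parabolic of the connected group $SL_3$ at all.

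For the final sentence I would then argue that $\Lambda(G)$, ordered by reverse inclusion, fails the simplicial-complex axioms of \cite[Thm.~5.2]{Tits-book}. The simplest route is a counting/face-structure obstruction: in a simplicial complex coming from a building, every simplex is the intersection of the maximal simplices (chambers) containing it, and faces are closed under intersection with a well-defined rank function. Because $\sigma$ permutes the two conjugacy classes of maximal $R$-parabolics of $G$, I would show that there is an $R$-parabolic $P\in\Lambda(G)$ whose "vertices" — the maximal proper $R$-parabolics containing it — do not assemble into a single simplex: either two maximal $R$-parabolics fail to have a common upper bound that is still a proper $R$-parabolic, or the intersection of two faces is not itself a face in $\Lambda(G)$. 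Equivalently, I would display two elements of $\Lambda(G)$ whose poset-theoretic meet/join violates the face-closure property, so that $\Lambda(G)$ cannot be realized as the poset of simplices of any simplicial complex.

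The main obstacle will be step (ii)--(iii): correctly describing the $R$-parabolic subgroups of the \emph{non-connected} group $G$ via cocharacters $\lambda\colon \mathbb{G}_m \to G$ and checking precisely when $P_\lambda(G)$ lies outside $\Delta(G)=\Delta(SL_3)$, since one must track how $\sigma$ acts on the set of cocharacters and on the two vertex-types of the $A_2$ building. In particular I expect the delicate point to be verifying that the chosen $H$ really is \emph{not} $G$-cr over $k$ — i.e. that no $k$-defined $R$-Levi of the containing $R$-parabolic absorbs $H$ — rather than the comparatively formal verification that the resulting poset $\Lambda(G)$ breaks the simplicial-complex axioms. I would handle the $G$-cr failure by an explicit cocharacter/destabilization computation in $SL_3\rtimes\langle\sigma\rangle$, exploiting that $\sigma$ has no fixed Borel pairing compatible with the required Levi decomposition.
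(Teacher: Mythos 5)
Your overall strategy---exploit the $\sigma$-fixed cocharacter direction to manufacture an $R$-parabolic subgroup of $G$ meeting the outer coset, hence lying outside $\Delta(G)=\Delta(SL_3)$---is the same as the paper's, but the step you yourself flag as delicate (choosing $H$ and proving it is not $G$-cr over $k$) is a genuine gap, and the candidates you offer do not survive inspection. You propose $H=\langle\sigma\rangle$ (or $\sigma$ times a well-chosen element), supported by the heuristic that ``$\sigma$ has no fixed Borel pairing compatible with the required Levi decomposition.'' That heuristic is false: $\sigma$ normalizes both $B$ and $T$, and it centralizes $\lambda:=\alpha^{\vee}+\beta^{\vee}$ (indeed $C_{Y_k(T)}(\sigma)=\mathbb{Z}(\alpha^{\vee}+\beta^{\vee})$, the paper's Lemma~\ref{A2}), so $\sigma\in L_\lambda$, i.e.\ $\sigma$ \emph{does} lie in a $k$-defined $R$-Levi of the $R$-parabolic $P_\lambda=\langle\sigma,B\rangle$ containing it; moreover in characteristic $\neq 2$ the finite group $\langle\sigma\rangle$ is linearly reductive and hence $G$-cr over $k$, so this candidate fails outright (and the theorem must be proved for every field $k$). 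You also state your ``key structural observation'' backwards: $P_\lambda(G)$ belongs to $\Delta(G)$ precisely when it does \emph{not} meet the coset $\tilde G\sigma$. The paper sidesteps all of this by taking $H$ large rather than small: it sets $H:=\langle\sigma,B\rangle$ and verifies $H=P_\lambda$, a proper $k$-defined $R$-parabolic subgroup of $G$. Such an $H$ is trivially not $G$-cr over $k$ (it cannot be contained in any $R$-Levi of itself, since $R_u(P_\lambda)=R_u(B)\neq 1$ forces every $R$-Levi to have strictly smaller dimension), and every $k$-defined $R$-parabolic subgroup of $G$ containing $H$, other than $G$, is $H$ itself, which is not contained in $G^{\circ}$; so $\Lambda(G)^H\not\subseteq\Delta(G)$ is immediate.

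For the final assertion your face-closure/meet-join plan is in the right spirit but is never instantiated, and it is not clear which pair of elements of $\Lambda(G)$ you would exhibit. The paper's argument is a short count using the same object $P_\lambda$: the $k$-defined $R$-parabolic subgroups of $G$ containing $B$ are exactly $B$, $P_\alpha$, $P_\beta$, $P_\lambda$ and $G$, five in all, whereas in a simplicial complex in the sense of~\cite[Thm.~5.2]{Tits-book} the poset of faces of any fixed simplex (here, the overgroups of $B$ under reversed inclusion) must be isomorphic to the power set of a finite set and so must have cardinality a power of $2$. Since $5$ is not a power of $2$, $\Lambda(G)$ is not a simplicial complex. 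Any correct completion of your approach would have to produce this extra fifth overgroup $P_\lambda$---the same $R$-parabolic that drives the first half of the theorem---and your proposal never exhibits it.
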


Theorem~\ref{TCCnotsubset} shows that we cannot use~Theorem~\ref{TCCgroup} to extend Propositions~\ref{algclocentralizer},~\ref{TCC-centralizer} and Theorems~\ref{main},~\ref{regular} to non-connected $G$. Before finishing this section, we consider a problem with a slightly different flavor. In~\cite[Thm.~1]{Liebeck-Testerman-irreducible-QJM}, Liebeck and Testerman showed that:
\begin{prop}\label{overgroups}
Let $k=\overline k$. Let $G$ be a semisimple. Suppose that $H$ is a connected subgroup of $G$ and $H$ is $G$-ir. Then $H$ has only finitely many overgroups.
\end{prop}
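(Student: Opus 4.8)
The plan is to reduce the counting of all overgroups to counting connected reductive overgroups, and then to isolate the latter as the essential difficulty. First I would observe that every closed overgroup $M$ with $H\le M\le G$ is itself $G$-ir over $k$: since $H$ is contained in no proper parabolic subgroup and $H\le M$, neither is $M$. Hence $M$ is reductive, so $M^{\circ}$ is connected reductive, and because $H$ is connected we have $H\le M^{\circ}$; thus $M^{\circ}$ is a connected reductive $G$-ir overgroup of $H$ and $M\le N_{G}(M^{\circ})$. Counting overgroups therefore splits into two tasks: (a) show there are only finitely many connected reductive overgroups $L$ of $H$, and (b) for each such $L$, bound the number of $M$ with $M^{\circ}=L$.

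The engine for (b) is the finiteness of $C_{G}(H)$. Since $H$ is $G$-ir it is $G$-cr over $k$, so by Proposition~\ref{algclocentralizer} the group $C_{G}(H)$ is reductive. If $C_{G}(H)^{\circ}$ contained a nontrivial torus $S$, then $H\le C_{G}(S)$; as $G$ is semisimple its center is finite, so $S$ is noncentral and $C_{G}(S)$ is a proper Levi subgroup of a proper parabolic subgroup of $G$, contradicting $G$-irreducibility. Hence the connected reductive group $C_{G}(H)^{\circ}$ contains no nontrivial torus and is therefore trivial, so $C_{G}(H)$ is finite. Now for any connected reductive overgroup $L$ we have $C_{G}(L)\le C_{G}(H)$, so $C_{G}(L)$ is finite; since $\mathrm{Out}(L)$ is finite for connected reductive $L$, one gets $N_{G}(L)^{\circ}=L\cdot C_{G}(L)^{\circ}=L$, whence $N_{G}(L)/L$ is finite. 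As every $M$ with $M^{\circ}=L$ is determined by the finite subgroup $M/L\le N_{G}(L)/L$, and each such $M$ automatically contains $H\le L$, task (b) follows.

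The hard part will be (a): the finiteness of the set of connected reductive subgroups $L$ with $H\le L\le G$. The approach I would take is to argue by downward induction on the codimension $\dim G-\dim L$, using that over $\overline{k}$ there are only finitely many $G$-conjugacy classes of connected reductive subgroups of each dimension, and that within a fixed class only finitely many conjugates can contain $H$. For the latter point, the conjugates of a fixed $L$ form the homogeneous space $G/N_{G}(L)$, and the set of those conjugates $L'$ with $H\le L'$ is a closed subvariety; if this subvariety were positive dimensional, the subgroup $M:=\overline{\langle L'\rangle}$ generated by the family would be a connected $G$-ir, hence reductive, overgroup of $H$ with $\dim M>\dim L$, so $L$ would be a connected reductive subgroup of the strictly larger overgroup $M$, allowing the induction to close. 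The delicate points, and the main obstacle, are to secure the required conjugacy-class finiteness for connected reductive subgroups in arbitrary characteristic (rigidity of semisimple subgroups) and to make the passage ``a positive-dimensional family generates a strictly larger overgroup'' precise, in particular controlling that the generated subgroup is again reductive; alternatively one may replace this step by the Liebeck--Seitz classification of maximal connected subgroups of $G$, at the cost of a case analysis over the simple factors of $G$.
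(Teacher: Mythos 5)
Your reduction and your part (b) are sound, and they recover exactly the one ingredient that the paper itself records: Proposition~\ref{overgroups} is not proved in this paper at all, but quoted from Liebeck and Testerman~\cite{Liebeck-Testerman-irreducible-QJM}, and the only thing the paper says about its proof (in Section~8) is that it rests on Lemma~\ref{finiteness}, the finiteness of $C_G(H)$. Your derivation of that lemma is correct and is essentially the standard one: $C_G(H)$ is reductive by Proposition~\ref{algclocentralizer}, and a nontrivial torus $S\le C_G(H)^{\circ}$ would place $H$ inside the proper Levi subgroup $C_G(S)$, contradicting $G$-irreducibility; hence $C_G(H)^{\circ}$ is trivial. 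The consequences you draw (every overgroup is $G$-ir hence reductive, connected overgroups are semisimple, $N_G(L)/L$ is finite, so counting overgroups reduces to counting connected reductive overgroups) are all fine.

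The genuine gap is your part (a), which is the actual content of the theorem, and the route you sketch cannot be repaired as stated. The finiteness you invoke --- ``over $\overline{k}$ there are only finitely many $G$-conjugacy classes of connected reductive subgroups of each dimension'' --- is false: in characteristic $0$ it already fails for tori (the subtori $\{\mathrm{diag}(t^{a},t^{b},t^{-a-b})\}$ of $SL_3$ are pairwise non-conjugate), and in characteristic $p>0$ it fails even for semisimple subgroups, which is the relevant case here since by your part (b) all connected overgroups are semisimple: in $G=SL_2\times SL_2$ the graphs $\{(g,F^{q}(g)) : g\in SL_2\}$ of powers of the Frobenius endomorphism $F$ are pairwise non-conjugate three-dimensional simple subgroups. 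Your fallback induction also does not close structurally: if a positive-dimensional family of conjugates of $L$ containing $H$ generates a strictly larger connected overgroup $M$, then all members of the family lie in $M$, and you are left with counting conjugates of $L$ containing $H$ \emph{inside} $M$ --- but $M$ can be $G$ itself (it necessarily is once $L$ is maximal connected), so this is the original problem again and the argument is circular. What actually fills this gap in the cited proof of Liebeck and Testerman is the theorem of Liebeck and Seitz that a semisimple group has only finitely many conjugacy classes of maximal \emph{connected} subgroups, combined with an induction on $\dim G$ (using that $H$ stays $M$-irreducible in any connected reductive overgroup $M$, because $R$-parabolic subgroups of $M$ have the form $P_\lambda(G)\cap M$) and a separate rigidity argument that only finitely many conjugates of a fixed maximal connected subgroup contain $H$. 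This is precisely the alternative you name in your final sentence but do not carry out, and it is where all of the characteristic-$p$ difficulty lives; so as it stands the proposal establishes the easy reduction but not the proposition.
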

We show that:
\begin{thm}\label{infinitelymanyconjugacy}
Let $k$ be a nonperfect field of characteristic $2$. Let $G:=PGL_4$. Then there exists a connected $k$-subgroup $H$ of $G$ such that $H$ is $G$-ir over $k$ and $H$ has infinitely many (non-$k$-defined) overgroups.
\end{thm}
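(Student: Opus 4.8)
\section*{Proof proposal for Theorem~\ref{infinitelymanyconjugacy}}

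The plan is to exhibit $H$ directly, exploiting the one feature that nonperfectness buys us: over such a field a connected $k$-subgroup can be $G$-irreducible over $k$ while failing to be $G$-completely reducible over $\overline k$. This is precisely the loophole that makes Proposition~\ref{overgroups} inapplicable, and it cannot occur over a perfect field. Indeed, if $k$ were perfect and $H$ were $G$-ir over $k$, then $H$ would be $G$-cr over $k$, hence $G$-cr over $k_s=\overline k$ by Proposition~\ref{separable}, so its Zariski closure $\overline H$ would be reductive and $C_G(\overline H)$ reductive by Proposition~\ref{algclocentralizer}, leaving no room for a positive-dimensional unipotent group to generate overgroups. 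Concretely, I would fix $a\in k\setminus k^2$, set $K=k(\sqrt a)$ (a purely inseparable quadratic extension), and take $H$ to be the image in $G=PGL_4=PGL(V)$, $V=k^4$, of a Weil restriction $R_{K/k}(-)$ of a rank-one $K$-group (equivalently, the group of units of a suitable $4$-dimensional purely inseparable commutative $k$-algebra $L$ acting on $V=L$ by multiplication), embedded so as to act $k$-irreducibly on $V$. The features I need to record are: \emph{(i)} $H$ is connected and pseudo-reductive over $k$; \emph{(ii)} the $H$-action on $V$ has no proper nonzero $k$-subspace; and \emph{(iii)} over $\overline k$ the closure $\overline H$ acquires a nontrivial unipotent radical, so $\overline H$ lies in a proper parabolic of $G_{\overline k}$ and is \emph{not} $G$-cr over $\overline k$.

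First I would prove $H$ is $G$-ir over $k$. Since $G$ is connected, the $k$-defined proper $R$-parabolic subgroups of $PGL(V)$ are the stabilizers of proper nonzero $k$-defined flags of $V$, and $H$ is $G$-ir over $k$ exactly when it acts $k$-irreducibly on $V$. By feature \emph{(ii)} there is no such stable subspace, so $H$ is contained in no proper $k$-parabolic. The hypothesis $a\notin k^2$ enters precisely here: a $k$-subspace stable under $H$ is stable under multiplication by $L$, hence is an $L$-submodule, and the algebra structure of $L$ forbids proper such submodules over $k$ while permitting them once $\sqrt a$ is adjoined, i.e.\ over $\overline k$. This same inseparability is what produces feature \emph{(iii)}: after base change, $L\otimes_k\overline k$ is a local Artinian ring with nilpotents, so $(L\otimes_k\overline k)^\times$ splits as a torus times a unipotent group, giving $\overline H$ its unipotent radical.

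Next I would pass to $\overline k$ and build the overgroups. Because $\overline H$ is not $G$-cr over $\overline k$, its normalizer need not be reductive, and I would locate inside $N_{G_{\overline k}}(\overline H)$ a commutative unipotent subgroup $U\cong\mathbb{G}_a^2$ that normalizes $\overline H$ with $U\cap\overline H=1$ (arising from the unipotent automorphisms of the local algebra $L\otimes_k\overline k$, realized by conjugation in $PGL_4$). For each of the infinitely many one-dimensional closed subgroups $V\le U$ (the pencil of lines through the origin), the product $M_V:=\overline H\cdot V$ is a closed subgroup containing $H$. Distinctness follows from $M_V\cap U=V$, which holds because $U$ is abelian and $U\cap\overline H=1$; and each $M_V$ is not $k$-defined, since $V$ is defined only over $\overline k$ and is not $\Gamma$-stable, so neither is $M_V$. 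This produces infinitely many non-$k$-defined overgroups of $H$, as required, with no contradiction to Proposition~\ref{overgroups} because $H$ is not $G$-ir over $\overline k$.

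The hard part will be the simultaneous fulfilment of \emph{(ii)} and \emph{(iii)}: $k$-irreducibility of the action forces $H$ to be ``spread out'' and non-degenerate, whereas a positive-dimensional $H$-normalized unipotent group over $\overline k$ requires $\overline H$ to degenerate inside a parabolic. Reconciling these opposing demands is exactly what the purely inseparable extension $K/k$ in characteristic $2$ makes possible, and the delicate points are choosing the $4$-dimensional $k$-algebra $L$ and its embedding so that both \emph{(ii)} and \emph{(iii)} hold, and then verifying rigorously that $N_{G_{\overline k}}(\overline H)$ genuinely contains a commutative unipotent subgroup $U\cong\mathbb{G}_a^2$ with $U\cap\overline H=1$. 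Once that structural computation is in place, the $k$-irreducibility check and the distinctness and non-$k$-definedness of the family $\{M_V\}$ are comparatively routine.
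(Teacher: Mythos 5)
Your construction is, in substance, the paper's own: the paper takes $H$ to be the image in $PGL_4$ of the unit group of $L=k(a^{1/4})$ acting on $V=L$ by multiplication (written concretely as the matrices $xI+wu+zu^2+yu^3$ with $u^4=aI$), proves $G$-irreducibility over $k$ exactly via the absence of proper $L$-submodules, and then builds overgroups inside $N_{G_{\overline{k}}}(\overline{H})$: it conjugates $\overline{H}$ to the unipotent Toeplitz group $\{XI+YN+ZN^2+WN^3\}$ and adjoins the involutions $I+bE_{24}$, $b\in\overline{k}$, whereas you take $\overline{H}\cdot V$ for the pencil of lines $V$ in a $\mathbb{G}_a^2$ of unipotent algebra automorphisms of $L\otimes_k\overline{k}\cong\overline{k}[t]/(t^4)$. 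Your deferred verification does go through: in characteristic $2$ the automorphisms $t\mapsto t+c_2t^2+c_3t^3$ form a copy of $\mathbb{G}_a^2$ normalizing $\overline{H}$ and meeting it trivially, and your distinctness argument $M_V\cap U=V$ is sound (only $U\cap\overline{H}=1$ is needed, not commutativity of $U$). So the core of the theorem --- connectedness, $G$-irreducibility over $k$, infinitely many overgroups --- is correctly handled and by essentially the same route as the paper.

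Two of your side assertions are wrong, however. First, feature (i): $H$ is \emph{not} pseudo-reductive. The Weil restriction $R_{L/k}(\mathbb{G}_m)$ is pseudo-reductive, but its quotient by the central $\mathbb{G}_m$ --- which is your $H$ --- is a smooth connected ($k$-wound) \emph{unipotent} group ($h^4$ is scalar for every $h$, as the paper notes), so $R_{u,k}(H)=H\neq 1$. This is harmless only because (i) is never used; your feature (iii) is what actually matters. Second, and more seriously, your argument that the $M_V$ are non-$k$-defined ``since $V$ is not $\Gamma$-stable'' fails: $\Gamma=\textup{Gal}(k_s/k)$ fixes $a^{1/4}$ and every purely inseparable element, so the lines with parameters in $k$ (e.g.\ $t\mapsto t+st^2$) and the corresponding $M_V$ \emph{are} $\Gamma$-stable; worse, $k$ may be separably closed and nonperfect, in which case $\Gamma$ is trivial and every subgroup is $\Gamma$-stable. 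The obstruction here is inseparable, not Galois-theoretic: conjugation by any $n\in PGL_4(k_s)$ normalizing $\overline{H}$ induces a $k_s$-algebra automorphism of the field $L\otimes_k k_s$, which is purely inseparable over $k_s$ and hence admits only the identity, so $N_{G_{\overline{k}}}(\overline{H})(k_s)=H(k_s)$; consequently any overgroup $M$ with $\overline{H}\subsetneq M\subseteq N_{G_{\overline{k}}}(\overline{H})$ has $\overline{M(k_s)}=\overline{H}\neq M$ and so cannot be $k$-defined, since a $k$-defined group has dense $k_s$-points~(AG.13.3 of Borel). With that substitution your proof is complete; note that the paper's own proof leaves this last point implicit as well.
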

 
Here is the structure of the paper. In Section 2, we set out the notation. In Sections 3 and 4, we prove Theorems~\ref{stronglyreductive} and~\ref{regular}, respectively. Then in Section 5, we discuss some relationships between complete reducibility and linear reductivity. In Section 6, we prove Theorem~\ref{main}. In Sections 7 and 8, we prove Theorems~\ref{TCCnotsubset} and~\ref{infinitelymanyconjugacy}, respectively. Finally, in Section 9, we present an example where the number of $G(k)$-conjugacy classes of $k$-anisotropic unipotent elements of $G(k)$ is infinite. This paper complements the author's previous work on rationality problems for complete reducibility~\cite{Uchiyama-Nonperfect-pre}.

\section{Preliminaries}
Throughout, we denote by $k$ an arbitrary field. Our basic references for algebraic groups are~\cite{Borel-AG-book},~\cite{Borel-Tits-Groupes-reductifs},~\cite{Conrad-pred-book}, and~\cite{Springer-book}. We write $G$ for a (possibly non-connected) reductive $k$-group. We write $X_k(G)$ and $Y_k(G)$ for the set of $k$-characters and $k$-cocharacters of $G$ respectively. For an algebraic group $H$, we write $H^{\circ}$ for the identity component of $H$. 

Fix a maximal $k$-split torus $T$ of $G$. We write $\Psi_k(G,T)$ for the set of $k$-roots of $G$ with respect to $T$~\cite[21.1]{Borel-AG-book}. We sometimes write $\Psi_k(G)$ for $\Psi_k(G,T)$. Let $\zeta\in\Psi_k(G)$. We write $U_\zeta$ for the corresponding root subgroup of $G$. Let $\zeta, \xi \in \Psi_k(G)$. Let $\xi^{\vee}$ be the coroot corresponding to $\xi$. Then $\zeta\circ\xi^{\vee}:\overline{ k}^{*}\rightarrow \overline{k}^{*}$ is a $k$-homomorphism such that $(\zeta\circ\xi^{\vee})(a) = a^n$ for some $n\in\mathbb{Z}$.
Let $s_\xi$ denote the reflection corresponding to $\xi$ in the Weyl group $W_k$ relative to $T$. Each $s_\xi$ acts on the set of roots $\Psi_k(G)$ by the following formula~\cite[Lem.~7.1.8]{Springer-book}:
$
s_\xi\cdot\zeta = \zeta - \langle \zeta, \xi^{\vee} \rangle \xi. 
$
\noindent By \cite[Prop.~6.4.2, Lem.~7.2.1]{Carter-simple-book} we can choose $k$-homomorphisms $\epsilon_\zeta : \overline{k} \rightarrow U_\zeta$  so that 
$
n_\xi \epsilon_\zeta(a) n_\xi^{-1}= \epsilon_{s_\xi\cdot\zeta}(\pm a)
            \text{ where } n_\xi = \epsilon_\xi(1)\epsilon_{-\xi}(-1)\epsilon_{\xi}(1).  \label{n-action on group}
$

We are going to recall below the notions of $R$-parabolic subgroups and $R$-Levi subgroups from~\cite[Sec.~2.1--2.3]{Richardson-conjugacy-Duke}. These notions are essential to define $G$-complete reducibility for subgroups of non-connected reductive groups; see~\cite{Bate-nonconnected-PAMS} and~\cite[Sec.~6]{Bate-geometric-Inventione}. First we need a definition of a limit. 

\begin{defn}
Let $X$ be a $k$-affine variety. Let $\phi : \overline{k}^*\rightarrow X$ be a $k$-morphism of $k$-affine varieties. We say that $\displaystyle\lim_{a\rightarrow 0}\phi(a)$ exists if there exists a $k$-morphism $\hat\phi:\overline{k}\rightarrow X$ (necessarily unique) whose restriction to $\overline{k}^{*}$ is $\phi$. If this limit exists, we set $\displaystyle\lim_{a\rightarrow 0}\phi(a) = \hat\phi(0)$.
\end{defn}

\begin{defn}\label{non-connected-G-cr}
Let $\lambda\in Y_k(G)$. Define
$
P_\lambda := \{ g\in G \mid \displaystyle\lim_{a\rightarrow 0}\lambda(a)g\lambda(a)^{-1} \text{ exists}\}, $\\
$L_\lambda := \{ g\in G \mid \displaystyle\lim_{a\rightarrow 0}\lambda(a)g\lambda(a)^{-1} = g\}, \,
R_u(P_\lambda) := \{ g\in G \mid  \displaystyle\lim_{a\rightarrow0}\lambda(a)g\lambda(a)^{-1} = 1\}
$. We call $P_\lambda$ an $R$-parabolic subgroup of $G$, $L_\lambda$ an $R$-Levi subgroup of $P_\lambda$. Note that $R_u(P_\lambda)$ is the unipotent radical of $P_\lambda$.
\end{defn}
If $\lambda$ is $k$-defined, $P_\lambda$, $L_\lambda$, and $R_u(P_\lambda)$ are $k$-defined~\cite[Lem.~2.5]{Bate-uniform-TransAMS}. It is well known that $L_\lambda = C_G(\lambda(\overline k^*))$. Note that if $k=k_s$, for a $k$-defined $R$-parabolic subgroup $P$ of $G$ and a $k$-defined $R$-Levi subgroup $L$ of $P$ there exists $\lambda\in Y_k(G)$ such that $P=P_\lambda$ and $L=L_\lambda$~\cite[Lem.~2.5, Cor.~2.6]{Bate-uniform-TransAMS}. 

Let $M$ be a reductive $k$-subgroup of $G$. Then there is a natural inclusion $Y_k(M)\subseteq Y_k(G)$ of $k$-cocharacters. Let $\lambda\in Y_k(M)$. We write $P_\lambda(G)$ or just $P_\lambda$ for the $k$-defined $R$-parabolic subgroup of $G$ corresponding to $\lambda$, and $P_\lambda(M)$ for the $k$-defined $R$-parabolic subgroup of $M$ corresponding to $\lambda$. It is clear that $P_\lambda(M) = P_\lambda(G)\cap M$ and $R_u(P_\lambda(M)) = R_u(P_\lambda(G))\cap M$. 

The next result is a consequence of Theorem~\ref{TCCgroup}, and we use it repeatedly.
\begin{prop}\label{handy}
Let $k=k_s$. Let $G$ be connected. Suppose that a (possibly non-$k$-defined) subgroup $H$ of $G$ is not $G$-cr over $k$. If a $k$-subgroup $N$ of $G$ normalizes $H$, then there exist a proper $k$-parabolic subgroup of $G$ containing $H$ and $N$. 
\end{prop}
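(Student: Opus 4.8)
The plan is to feed the non-$G$-cr subgroup $H$ into Theorem~\ref{TCCgroup} and then show that the distinguished simplex it produces is invariant under conjugation by $N$. Since $H$ is not $G$-cr over $k$, Theorem~\ref{TCCgroup} supplies a simplex $\sigma\in\Delta(G)^H$ that is fixed by every building automorphism of $\Delta(G)$ stabilizing $\Delta(G)^H$. Under the identification of simplices of $\Delta(G)$ with proper $k$-parabolic subgroups of $G$, this $\sigma$ corresponds to a proper $k$-parabolic subgroup $P$, and the membership $\sigma\in\Delta(G)^H$ translates into $H\subseteq P$ via the identification of $\Delta(G)^H$ with the set of $k$-parabolic subgroups containing $H$. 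This already produces a proper $k$-parabolic containing $H$; the remaining work is to force $N$ into this same $P$.

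Next I would observe that each $n\in N(k)$ acts on $\Delta(G)$ through the inner (hence building) automorphism induced by conjugation $x\mapsto nxn^{-1}$, which carries $k$-parabolic subgroups to $k$-parabolic subgroups. Because $N$ normalizes $H$, for any $k$-parabolic $Q\supseteq H$ we have $nQn^{-1}\supseteq nHn^{-1}=H$, so conjugation by $n$ permutes the $k$-parabolics containing $H$ and therefore stabilizes the subcomplex $\Delta(G)^H$. By the defining property of $\sigma$ furnished by Theorem~\ref{TCCgroup}, the automorphism induced by $n$ fixes $\sigma$; under the parabolic dictionary this says exactly that $nPn^{-1}=P$, i.e. $n\in N_G(P)$. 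Since parabolic subgroups of a connected group are self-normalizing, $N_G(P)=P$, whence $n\in P$. Thus $N(k)\subseteq P$.

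Finally I would upgrade $N(k)\subseteq P$ to $N\subseteq P$. In the present setting the algebraic groups are $\overline{k}$-varieties equipped with a $k$-structure, hence geometrically reduced and so smooth; as $k=k_s$ is separably closed, $N(k)$ is Zariski dense in $N$. Since $P$ is Zariski closed and contains $N(k)$, it contains $\overline{N(k)}=N$. Combined with $H\subseteq P$, this exhibits $P$ as a proper $k$-parabolic subgroup of $G$ containing both $H$ and $N$, as required. The conceptual weight of the argument is carried entirely by the single application of Theorem~\ref{TCCgroup} (the consequence of the center conjecture), which I am free to assume; the only genuinely delicate bookkeeping is verifying that conjugation by $N(k)$ really preserves $\Delta(G)^H$ (which rests solely on $N$ normalizing $H$) and invoking density of $N(k)$ in $N$ to pass from rational points to the full group scheme. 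I expect the latter density step, together with the correct reading of ``fixes the simplex $\sigma$'' as $nPn^{-1}=P$, to be the subtlest part of an otherwise short argument.
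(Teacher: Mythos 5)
Your proof is correct and follows essentially the same route as the paper: the paper's own proof simply cites an external result (Prop.~3.3 of the author's earlier paper, itself a consequence of Theorem~\ref{TCCgroup}) for exactly the argument you spell out --- the center-conjecture simplex is stabilized by conjugation by $N(k)$ because $N$ normalizes $H$, and self-normalization of parabolics then forces $N(k)$ into the corresponding proper $k$-parabolic. The final step, upgrading $N(k)\subseteq P$ to $N\subseteq P$ via density of $k_s$-points in the $k$-group $N$, is identical in both arguments.
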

\begin{proof}
By~\cite[Prop.~3.3]{Uchiyama-Nonperfect-pre}, there exists a proper $k$-parabolic subgroup $P$ containing $H$ and $N(k)$. Since the $k_s$-points are dense in $N$ by~\cite[AG.13.3]{Borel-AG-book}, $P$ contains $H$ and $N$. 
\end{proof}

\section{Complete reducibility and strong reductivity}
In this paper, we extend various other results concerning complete reducibility in~\cite{Bate-geometric-Inventione} to an arbitrary $k$. First, we extend the notion of \emph{strong reductivity}~\cite[Def.~16.1]{Richardson-conjugacy-Duke}.

\begin{defn}\label{strong}
Let $k=k_s$. Let $H$ be a subgroup of $G$. Then $H$ is \emph{strongly reductive over $k$ in $G$} if $H$ is not contained in any proper $k$-defined $R$-parabolic subgroup of the reductive $k$-group $C_G(S)$, where $S$ is a maximal $k$-torus of $C_G(H)$.
\end{defn} 
Note that this definition does not depend on the choice of $S$. Also note that Definitions~\ref{strong} (and~\ref{def-regular} below) make sense even if $H$ is not $k$-defined. We generalize~\cite[Thm.~3.1]{Bate-geometric-Inventione}, which was the main result of~\cite{Bate-geometric-Inventione}. 
\begin{thm}\label{stronglyreductive}
Let $k=k_s$. Let $H$ be a (possibly non-$k$-defined) subgroup of $G$. Then $H$ is $G$-cr over $k$ if and only if $H$ is strongly reductive over $k$ in $G$.
\end{thm}

\begin{proof}
Suppose that $H$ is $G$-cr over $k$. Let $S$ be a maximal $k$-defined torus of $C_G(H)$. Suppose that $S$ is central in $G$. Then $C_G(S)=G$. Suppose that $H$ is contained in a proper $k$-defined $R$-parabolic subgroup $P$ of $G$. Then there exists a $k$-defined $R$-Levi subgroup $L$ of $P$ containing $H$ since $H$ is $G$-cr over $k$. Since $k=k_s$, we can set $L=L_{\lambda}$ and $P=P_{\lambda}$ for some $\lambda\in Y_{k}(G)$. Then $\lambda(\overline k^*)<C_G(H)$ and $\lambda(\overline k^*)$ is a connected commutative non-central $k$-subgroup of $G$. Now let $C:=\overline{C_{G}(H)(k_s)}$. Then $C$ is the unique maximal $k$-defined subgroup of $C_{G}(H)$. Since $k=k_s$, maximal $k$-tori of $C$ are $G(k)$-conjugate by~\cite[Thm.~20.9]{Borel-AG-book}. Then $\lambda(\overline k^*)<S$ since $S$ is central in $G$. This is a contradiction. So $H$ cannot be contained in a proper $k$-defined $R$-parabolic subgroup of $G$. Therefore $H$ is strongly reductive over $k$.   

Now we assume $S$ is non-central in $G$. Suppose that $H$ is contained in a $k$-defined proper $R$-parabolic subgroup $Q$ of $C_G(S)$. Note that $C_G(S)$ is a $k$-defined $R$-Levi subgroup of $G$ (\cite[Cor.~6.10]{Bate-geometric-Inventione}). Then by the rational version of~\cite[Lem.~6.2(\rmnum{2})]{Bate-geometric-Inventione} (note that \cite[Lem.~6.2(\rmnum{2})]{Bate-geometric-Inventione} is for $k=\overline k$, but the same proof works work by word if we set $P=P_{\lambda}$, $P'=P'_{\mu}$, and $L=L_{\lambda}$ for $\lambda\in Y_k(G)$ in the proof), there exists a $k$-defined proper $R$-parabolic subgroup $P_\mu$ of $G$ such that $Q=C_G(S)\cap P_\mu$. It is clear that $S<Q< P_\mu$. Since $H$ is $G$-cr over $k$, there exists a $k$-defined $R$-Levi subgroup $L$ of $P_\mu$ containing $H$. Without loss we set $L=L_\mu$. Then $\mu(\overline k^{*})$ is a $k$-torus in $C_{P_{\mu}}(H)$. Since $S$ is contained in $P_{\mu}$ and $S$ is a maximal $k$-torus of $C_G(H)$, $S$ is a maximal $k$-torus of $C_{P_\mu}(H)$. Since $k=k_s$, by the same argument as in the first paragraph, we have $g\mu(\overline k^*)g^{-1}<S$ for some $g\in P_{\mu}(k)$. Then $C_G(S)< C_G(g\mu(\overline{k}^{*})g^{-1})= gL_{\mu}g^{-1} < P_{\mu}$. Therefore $Q=C_G(S)$, which is a contradiction.

Now suppose that $H$ is strongly reductive over $k$. Let $S$ be a maximal $k$-torus of $C_G(H)$. Then $H$ is not contained in any proper $k$-defined $R$-parabolic subgroup of $C_G(S)$. Let $L:=C_G(S)$. Let $Q$ be a $k$-defined $R$-parabolic subgroup of $G$ containing $L$ as a $k$-Levi subgroup. Then by the rational version of~ \cite[Lem.~6.2(\rmnum{2})]{Bate-geometric-Inventione}, $Q$ is minimal among all $k$-defined $R$-parabolic subgroups of $G$ containing $H$. Let $P$ be a $k$-defined $R$-parabolic subgroup of $G$ containing $H$. Our goal is to find a $k$-defined $R$-Levi subgroup of $P$ containing $H$. 

If $P'$ is a $k$-defined $R$-parabolic subgroup of $G$ such that $P'<P$ and $M'$ is a $k$-defined $R$-Levi subgroup of $P'$, then, by~\cite[Cor.~6.6]{Bate-geometric-Inventione}, there exists a unique $\overline k$-defined $R$-Levi subgroup $M''$ of $P$ containing $M'$. But $M''$ is $k$-defined by~\cite[Lem.~2.5(\rmnum{3})]{Bate-uniform-TransAMS}. So, we assume that $P$ is minimal among all $k$-defined $R$-parabolic subgroups of $G$ containing $H$. By~\cite[Prop.~20.7]{Borel-AG-book}, $P\cap Q$ contains a maximal $k$-torus $T$ of $G$. 
We see that there exists a (possibly non-$k$-defined) common $R$-Levi subgroup $M$ of $P$ and $Q$ containing $T$ by a similar argument to that in the proof of~\cite[Thm.~3.1]{Bate-geometric-Inventione} (use~\cite[Lem.~6.2]{Bate-geometric-Inventione} where necessary). Since $T$ is $k$-defined, $M$ is $k$-defined by~\cite[Lem.~2.5(\rmnum{3})]{Bate-uniform-TransAMS}.

Let $P^{-}$ be the unique opposite of $P$ such that $M=P\cap P^{-}$. By the argument in the third paragraph of the proof of~\cite[Thm.~3.1]{Bate-geometric-Inventione}, we have
\begin{equation*}
R_u(Q)=(R_u(Q)\cap M)(R_u(Q)\cap R_u(P^{-}))(R_u(Q)\cap R_u(P)).
\end{equation*}
It is clear that $R_u(Q)\cap M$ is trivial. Since $L$ and $M$ are $k$-defined $R$-Levi subgroups of $Q$, there exists $u'\in R_u(Q)(k)$ such that $u'Mu'^{-1}=L$ by~\cite[Lem.~2.5(\rmnum{3})]{Bate-uniform-TransAMS}. Using the rational version of the Bruhat decomposition (\cite[Thm.~21.15]{Borel-AG-book}), we can express $u'$ as $u'=yz$ where $y\in (R_u(Q)\cap R_u(P^{-}))(k)$ and $z\in (R_u(Q)\cap R_u(P))(k)$. Note that $zMz^{-1}$ is a $k$-defined common $R$-Levi subgroup of $P$ and $Q$ because $z\in (R_u(Q)\cap R_u(P))(k)$. So, without loss, we may assume $z=1$. Then $y M y^{-1} = L$ and $L<P^{-}$. Since $L$ contains $H$, we have $H < P\cap P^{-}=M$. We are done.

\end{proof}

\begin{rem}
This result shows that Richardson's various results on strongly reductivity in~\cite{Richardson-conjugacy-Duke} and~\cite{Richardson-orbits-BullAustralian} may be extended to an arbitrary (or a separably closed) $k$ using methods and results in this paper. 
\end{rem}

\section{Complete reducibility and regular subgroups}
Generalizing the notion of a \emph{regular subgroup} of $G$~\cite{Liebeck-Seitz-memoir},~\cite{Liebeck-Seitz-Steinberg-JAlgebra}, define:
\begin{defn}\label{def-regular}
A (possibly non-$k$-defined) subgroup $H$ of $G$ is \emph{$k$-regular} if $H$ is normalized by a maximal $k$-torus of $G$. 
\end{defn}
We extend~\cite[Prop.~3.19]{Bate-geometric-Inventione}.
\begin{lem}\label{maximaltoruscentralizer}
Let $k=k_s$. Let $H$ be a reductive $k$-subgroup of $G$. Let $K$ be a (possibly non-$k$-defined) subgroup of $H$. Suppose that $H$ contains a maximal $k$-torus of $C_G(K)$ and that $K$ is $G$-cr over $k$. Then $K$ is $H$-cr over $k$ and $H$ is $G$-cr over $k$.
\end{lem}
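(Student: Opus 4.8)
The plan is to prove both assertions using the strong-reductivity criterion of Theorem~\ref{stronglyreductive} together with the compatibility of $R$-parabolic subgroups of $H$ and $G$ recorded at the end of Section~2, namely $P_\lambda(H)=P_\lambda(G)\cap H$ for $\lambda\in Y_k(H)$. The key structural input is the hypothesis that $H$ contains a maximal $k$-torus $S$ of $C_G(K)$. First I would observe that $S$ is then a maximal $k$-torus of $C_H(K)=C_G(K)\cap H$ as well, since any $k$-torus of $C_H(K)$ centralizing $K$ lives in $C_G(K)$ and $S$ is already maximal there. This identifies the same torus $S$ as the relevant maximal $k$-torus for the strong-reductivity condition both inside $G$ and inside $H$, which is what makes the two reductions line up.

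\medskip

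For the first conclusion, that $K$ is $H$-cr over $k$, I would apply Theorem~\ref{stronglyreductive} to $K$ in $H$. Since $K$ is $G$-cr over $k$, it is strongly reductive over $k$ in $G$, i.e.\ $K$ is not contained in any proper $k$-defined $R$-parabolic subgroup of $C_G(S)$. I want to deduce that $K$ is not contained in any proper $k$-defined $R$-parabolic subgroup of $C_H(S)$. The point is that $C_H(S)=C_G(S)\cap H$, and any proper $k$-defined $R$-parabolic $Q$ of $C_H(S)$ has the form $P_\mu(C_H(S))=P_\mu(C_G(S))\cap C_H(S)$ for a suitable $\mu\in Y_k(C_H(S))\subseteq Y_k(C_G(S))$; if $K<Q$ then $K<P_\mu(C_G(S))$, and one must check this $R$-parabolic of $C_G(S)$ is proper. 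Properness follows because $K$ is not $C_G(S)$-ir over $k$ would be the issue, but here the containment in a proper $R$-parabolic of the smaller group $C_H(S)$ forces a proper one in $C_G(S)$ (otherwise $\mu$ would be central in $C_G(S)$, hence central in $C_H(S)$, contradicting properness of $Q$). Thus $K$ strongly reductive over $k$ in $G$ yields $K$ strongly reductive over $k$ in $H$, so $K$ is $H$-cr over $k$ by Theorem~\ref{stronglyreductive}.

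\medskip

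For the second conclusion, that $H$ is $G$-cr over $k$, I would again use Theorem~\ref{stronglyreductive}, now applied to $H$ in $G$. Let $S'$ be a maximal $k$-torus of $C_G(H)$. Since $C_G(H)<C_G(K)$ (as $K<H$), $S'$ is contained in a maximal $k$-torus of $C_G(H)$ and I need to show $H$ is not contained in any proper $k$-defined $R$-parabolic subgroup of $C_G(S')$. The natural route is to relate $S'$ to $S$: because $C_G(S')$ is an $R$-Levi subgroup of $G$ containing $H$, and $H$ is reductive, I can work inside $C_G(S')$ and use the $G$-cr hypothesis on $K$ together with the just-established $H$-cr of $K$. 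Concretely, a proper $k$-defined $R$-parabolic $P_\nu(C_G(S'))$ containing $H$ would contain $K$, and intersecting with $H$ gives a proper $k$-defined $R$-parabolic of $H$ containing $K$ unless $\nu$ acts trivially on $H$; since $K$ is $H$-cr over $k$ it lies in an $R$-Levi of that parabolic, and one pushes this back through the compatibility $P_\nu(H)=P_\nu(G)\cap H$ to force $H$ itself into an $R$-Levi, contradicting properness.

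\medskip

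I expect the main obstacle to be the bookkeeping in the second part: ensuring that the maximal $k$-torus $S'$ of $C_G(H)$ can be handled through the torus $S$ of $C_G(K)$ that the hypothesis supplies, and that properness of the various $R$-parabolic subgroups is genuinely preserved when passing between $G$, $C_G(S)$, $C_G(S')$ and $H$. The cocharacter compatibility $P_\lambda(M)=P_\lambda(G)\cap M$ and the fact over $k=k_s$ that every $k$-defined $R$-parabolic and $R$-Levi arises from a $k$-cocharacter (cited from~\cite[Lem.~2.5, Cor.~2.6]{Bate-uniform-TransAMS}) are the tools that should make each transfer go through, but verifying that a nonproper image forces $\nu$ to be central in the relevant centralizer — and hence contradicts properness downstairs — is the delicate point that requires care.
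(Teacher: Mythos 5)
Your first half (that $K$ is $H$-cr over $k$) is essentially the paper's own argument: take $S\le H$ a maximal $k$-torus of $C_G(K)$, note it is also a maximal $k$-torus of $C_H(K)$, use Theorem~\ref{stronglyreductive} to convert the $G$-cr hypothesis into $C_G(S)$-irreducibility of $K$ over $k$, and transfer this to $C_H(S)$. Your extra care about properness (if $P_\mu(C_G(S))=C_G(S)$ then $\mu(\overline k^*)$ is central in the reductive group $C_G(S)$, hence centralizes $C_H(S)$, contradicting properness of $Q$) correctly fills in a step the paper passes over in one sentence. That half is fine.

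The second half has a genuine gap. You start from $H<P_\nu(C_G(S'))$ and claim that "intersecting with $H$ gives a proper $k$-defined $R$-parabolic of $H$ containing $K$." But since $H\le P_\nu(C_G(S'))$ by assumption, that intersection is all of $H$ and is never proper; moreover $P_\nu(H)$ is only an $R$-parabolic of $H$ when $\nu$ has image in $H$ (or normalizes it), which nothing guarantees for $\nu\in Y_k(C_G(S'))$. The next inference, that $K$ lying in an $R$-Levi of this parabolic "forces $H$ itself into an $R$-Levi," is a non sequitur: complete reducibility of $K$ says nothing about where $H$ sits. The telltale symptom is that your part-two argument never actually uses the hypothesis $S\le H$ — the torus $S$ is mentioned ("relate $S'$ to $S$") but plays no role — so the same argument run with $K=1$ (which is trivially $G$-cr over $k$ and $H$-cr over $k$) would "prove" that every reductive $k$-subgroup of $G$ is $G$-cr over $k$, which is false in positive characteristic already over $\overline k$. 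The paper's proof of this half is where the hypothesis does its work: given a $k$-defined $R$-parabolic $P\supseteq H$, it uses the $G$-cr property of $K$ and rational conjugacy of maximal $k$-tori of $C_P(K)$ (here $k=k_s$ is used) to produce $\lambda\in Y_k(G)$ with $P=P_\lambda$ and $C_G(S)<L_\lambda$; then $\lambda(\overline k^*)\le C_G(C_G(S))^\circ=Z(C_G(S))^\circ$, and maximality of $S$ as a $k$-torus of $C_G(K)$ together with $k$-definedness of $Z(C_G(S))^\circ$ gives $Z(C_G(S))^\circ=S\le H$; hence $\lambda\in Y_k(H)$, so $P_\lambda(H)=H$, and reductivity of $H$ forces $\lambda(\overline k^*)\le Z(H)$, whence $H\le L_\lambda$. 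Some version of this conjugation-into-$S$ step (whether you verify the definition of $G$-cr directly, as the paper does, or aim for strong reductivity of $H$ as you propose) is exactly what your sketch is missing.
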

\begin{proof}
Let $S$ be a maximal $k$-torus of $C_G(K)$ contained in $H$. Then $S$ is a maximal $k$-torus of $C_H(K)$. Since $K$ is $G$-cr over $k$, $K$ is $C_G(S)$-ir over $k$ by~Theorem~\ref{stronglyreductive}. Note that $K<C_H(S)<C_G(S)$. So $K$ is $C_H(S)$-ir over $k$. Thus $K$ is $H$-cr over $k$ by~Theorem~\ref{stronglyreductive}. 

Let $P$ be a $k$-defined $R$-parabolic subgroup of $G$ containing $H$. Then $P$ contains $K$. Since $K$ is $G$-cr over $k$, by the same argument as in the second paragraph of the proof of Theorem~\ref{stronglyreductive}, there exists $\lambda\in Y_k(G)$ such that $C_G(S)<L_\lambda$ and $P=P_\lambda$. Thus we have $\lambda(\overline k^*)<C_G(C_G(S))^{\circ}=Z(C_G(S))^{\circ}$. It is clear that $S<Z(C_G(S))^{\circ}$. We have $Z(C_G(S))^{\circ}<C_G(K)$. By~\cite[Thm.~18.2]{Borel-AG-book}, $Z(C_G(S))^{\circ}$ is $k$-defined. Since $S$ is a maximal $k$-torus of $C_G(K)$, we have $S=Z(C_G(S))^{\circ}$. Thus $\lambda(\overline k^*)<S<H$. So $\lambda\in Y_k(H)$. Thus we have $P_\lambda(H)=P_\lambda\cap H=H$. So $\lambda\in Y_k(Z(H))$. Then $H<C_G(\lambda(\overline k^*))=L_\lambda$, and we are done.
\end{proof}
The following the main result in this section. We extend~\cite[Prop.~3.20]{Bate-geometric-Inventione}.
\begin{thm}\label{regular}
Let $k=k_s$. Let $G$ be connected. Let $H$ be a $k$-regular reductive $k$-subgroup of $G$. Then $H$ is $G$-cr over $k$.
\end{thm}
\begin{proof}
Let $T$ be a maximal $k$-torus of $G$ normalizing $H$. Then if we show that $TH$ is $G$-cr over $k$, by~\cite[Prop.~3.5]{Uchiyama-Nonperfect-pre} we are done since $H$ is a normal subgroup of the $k$-group $TH$. Note that $C_G(T)=T$ and $T$ is $G$-cr over $k$ by~\cite[Cor.~9.8]{Bate-cocharacter-Arx}. Applying Lemma~\ref{maximaltoruscentralizer} to $T<TH<G$, we obtain the desired result.
\end{proof}

\section{Complete reducibility and linear reductivity}
In this section we assume that $G$ is connected. Recall that a subgroup $H$ of $G$ is called \emph{linearly reductive} if every rational representation of $H$ is completely reducible. It is known that a linearly reductive subgroup $H$ of $G$ is $G$-cr~\cite[Lem.~2.6]{Bate-geometric-Inventione} and if $H$ is $k$-defined, it is $G$-cr over $k$~\cite[Cor.~9.8]{Bate-cocharacter-Arx}. 

It is clear that the converse of Proposition~\ref{algclocentralizer} is false; take $H$ to be a Borel subgroup of $G$. However we have the following partial converse~\cite[Cor.~3.18]{Bate-geometric-Inventione}:
\begin{lem}\label{lem1}
Let $k=\overline k$. Let $H$ be a subgroup of $G$. If $C_G(H)$ is $G$-ir, then $H$ is linearly reductive. In particular, $H$ is $G$-cr.
\end{lem}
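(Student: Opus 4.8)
The plan is to prove directly that $H$ is linearly reductive; the final clause then follows immediately, since a linearly reductive subgroup of $G$ is $G$-cr by~\cite[Lem.~2.6]{Bate-geometric-Inventione}. Since $k=\overline k$, by the classification of linearly reductive groups (Nagata's theorem) it suffices to show that $H^{\circ}$ is a torus and, when $\operatorname{char}k=p>0$, that $p\nmid [H:H^{\circ}]$; in characteristic $0$ it is enough that $H^{\circ}$ is a torus, as then $H$ is reductive and hence linearly reductive. Every step exploits the single hypothesis that $C_G(H)$ lies in no proper parabolic subgroup of $G$.

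First I would record the torus constraint. If $S$ is any subtorus of $H$, then $C_G(H)\subseteq C_G(S)$ because $S\subseteq H$. Were $S$ noncentral in $G$, I could choose a noncentral $\lambda\in Y(S)$ and obtain $C_G(H)\subseteq C_G(S)\subseteq L_\lambda\subseteq P_\lambda$, a proper $R$-parabolic subgroup, contradicting the $G$-irreducibility of $C_G(H)$. Hence every subtorus of $H$, in particular a maximal torus $S$ of $H^{\circ}$, is central in $G$.

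Next comes the key structural step, which uses the Borel--Tits theorem~\cite{Borel-Tits-Groupes-reductifs}. Suppose some $x\in H$ had nontrivial unipotent part $x_u$ (note $x_s,x_u\in H$ since $H$ is closed). Then $U:=\overline{\langle x_u\rangle}$ is a nontrivial unipotent subgroup of $G$, and $C_G(H)\subseteq C_G(x)\subseteq C_G(x_u)\subseteq N_G(U)$. By Borel--Tits, $N_G(U)$ is contained in a proper parabolic subgroup of $G$, contradicting $G$-irreducibility of $C_G(H)$. Thus every element of $H$ is semisimple; in particular $R_u(H^{\circ})=1$, so $H^{\circ}$ is reductive. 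Combining this with the previous paragraph, the maximal torus of the semisimple part $[H^{\circ},H^{\circ}]$ sits inside $S$ and is therefore central in $G$, which forces $[H^{\circ},H^{\circ}]=1$; hence $H^{\circ}$ is a (central) torus. I expect this Borel--Tits input to be the main obstacle, since it is the device that converts the presence of a unipotent element into a destabilizing parabolic for $C_G(H)$.

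Finally, in characteristic $p$ I would rule out $p\mid [H:H^{\circ}]$. An element of order $p$ in $H/H^{\circ}$ lifts to some $x\in H$ with $x^{p}\in H^{\circ}=S$. As $S$ is a divisible central torus, I can pick $s\in S$ with $s^{p}=x^{p}$; then $y:=xs^{-1}$ satisfies $y^{p}=1$ and still projects to an order-$p$ element of $H/H^{\circ}$, so $y\neq 1$. But $y$ is semisimple (a product of commuting semisimple elements), and a nontrivial semisimple element of order $p$ cannot exist in characteristic $p$ — a contradiction. Therefore $p\nmid[H:H^{\circ}]$. With $H^{\circ}$ a torus and $[H:H^{\circ}]$ prime to $p$, Nagata's theorem gives that $H$ is linearly reductive, and~\cite[Lem.~2.6]{Bate-geometric-Inventione} then yields that $H$ is $G$-cr.
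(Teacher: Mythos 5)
Your proof is correct, and it takes a genuinely different route from the paper's. The paper offers no argument of its own for Lemma~\ref{lem1}: the statement is quoted from \cite[Cor.~3.18]{Bate-geometric-Inventione}, with the remark that it is a consequence of Lemma~\ref{lem2} --- apply Lemma~\ref{lem2} to the $G$-irreducible group $C_G(H)$ to get that $C_G(C_G(H))$ is linearly reductive, then use $H\le C_G(C_G(H))$. You instead argue from scratch: your key structural step is exactly the Borel--Tits mechanism that underlies Lemma~\ref{lem2}, run with the roles of $H$ and $C_G(H)$ interchanged (every element of $H$ centralizes $C_G(H)$, so a nontrivial unipotent part of an element of $H$ would trap $C_G(H)$ inside $N_G(\overline{\langle x_u\rangle})$ and hence inside a proper parabolic), after which you finish with Nagata's classification rather than by quoting Lemma~\ref{lem2}. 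This is, incidentally, the very strategy the paper itself uses in the rational setting in Propositions~\ref{linred} and~\ref{linred2}, where Borel--Tits is unavailable over nonperfect fields and must be replaced by the $k$-plongeability hypothesis plus Proposition~\ref{handy}; over $k=\overline k$ your unconditional appeal to Borel--Tits is legitimate. As for what each approach buys: the paper's route is a one-line deduction from quoted results, while yours is self-contained and yields stronger intermediate facts (every subtorus of $H$ is central in $G$; every element of $H$ is semisimple). Your route is also arguably more robust: in characteristic $0$ a closed subgroup of a linearly reductive group need not be linearly reductive (Borel subgroups of reductive groups), so the quoted deduction from Lemma~\ref{lem2} really requires the stronger ``all elements of $C_G(C_G(H))$ are semisimple'' form of its conclusion, which is exactly what you establish for $H$ directly. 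The only step you should make explicit is why a noncentral torus $S$ admits a noncentral cocharacter (a torus is generated by the images of its cocharacters); everything else is complete.
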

The previous lemma was a consequence of the following~\cite[Lem.~3.38]{Bate-geometric-Inventione}:
\begin{lem}\label{lem2}
Let $k=\overline k$. Let $H$ be a subgroup of $G$. If $H$ is $G$-ir, then $C_G(H)$ is linearly reductive. 
\end{lem}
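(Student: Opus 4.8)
The plan is to reduce the statement to the structural characterisation of linearly reductive groups and then to exploit $G$-irreducibility through the canonical parabolic attached to a unipotent element. Write $M:=C_G(H)$. Since $H$ is $G$-ir over $\overline k$ it is in particular $G$-cr, so by Proposition~\ref{algclocentralizer} the group $M$ is reductive. If $\operatorname{char}k=0$ then every reductive group is linearly reductive and we are done immediately, so assume $\operatorname{char}k=p>0$. By Nagata's characterisation, an algebraic group over $\overline k$ is linearly reductive in characteristic $p$ if and only if its identity component is a torus and its component group has order prime to $p$; equivalently, if and only if it contains no nontrivial unipotent element. Thus the whole statement reduces to the single claim that $M$ contains no nontrivial unipotent element.

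To prove the claim, suppose for contradiction that $u\in M$ is a nontrivial unipotent element. Since $u$ centralises $H$, we have $H<C_G(u)$. Now attach to $u$ its optimal destabilising cocharacter $\lambda$ for the conjugation action of $G$ on itself; here $u$ is unstable because $1$ lies in the closure of its orbit, as $\lim_{a\to 0}\lambda(a)u\lambda(a)^{-1}=1$ for a suitable $\lambda$, so $u\in R_u(P_\lambda)$. By the Kempf--Rousseau theory of optimal cocharacters, $\lambda$ can be chosen canonically so that the stabiliser $C_G(u)$ is contained in the associated $R$-parabolic subgroup $P_\lambda$. Because $u\neq 1$, the cocharacter $\lambda$ is noncentral, so $P_\lambda$ is a \emph{proper} parabolic subgroup of $G$ (were $P_\lambda=G$, then $\lambda$ would be central and the limit above would equal $u\neq 1$). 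Consequently $H<C_G(u)<P_\lambda\subsetneq G$, contradicting the $G$-irreducibility of $H$. This proves the claim.

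It remains to deduce linear reductivity. As $M$ has no nontrivial unipotent element, $R_u(M^\circ)=1$, so $M^\circ$ is connected reductive; a connected reductive group with no nontrivial unipotent element is a torus (otherwise its root subgroups would supply unipotents), hence $M^\circ$ is a torus. For the component group, suppose $p$ divided $[M:M^\circ]$. By Cauchy's theorem $M/M^\circ$ would contain an element $\bar g$ of order $p$; lifting to $g\in M$ and taking the Jordan decomposition $g=g_s g_u$ with $g_s,g_u\in M$, the claim forces $g_u=1$, so $g=g_s$ is semisimple. Then the Zariski closure of $\langle g\rangle$ is diagonalisable, hence a product of a subtorus lying in $M^\circ$ and a finite group of order prime to $p$, so the image of $g$ in $M/M^\circ$ has order prime to $p$, contradicting $\operatorname{ord}(\bar g)=p$. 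Therefore $p\nmid[M:M^\circ]$, and by Nagata's characterisation $M=C_G(H)$ is linearly reductive.

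The main obstacle is the key claim, and within it the input that the optimal (instability) cocharacter of a nontrivial unipotent $u$ produces a \emph{proper} $R$-parabolic subgroup containing the full centraliser $C_G(u)$: an arbitrary $\lambda$ with $u\in R_u(P_\lambda)$ need not have this containment property, and it is precisely the canonical Kempf--Rousseau choice that packages the geometric instability of $u$ into a parabolic absorbing $C_G(u)$. Once this tool is in place, the translation into a contradiction with $G$-irreducibility and the final bookkeeping on the component group via Jordan decomposition are routine.
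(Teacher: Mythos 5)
Your proof is correct. Note that there is no in-paper argument to compare it against: the paper states Lemma~\ref{lem2} as a quoted result, citing \cite[Lem.~3.38]{Bate-geometric-Inventione}, and gives no proof of its own. Your route --- use Proposition~\ref{algclocentralizer} plus Nagata's characterisation to reduce everything to showing $C_G(H)$ has no nontrivial unipotent element, then eliminate any such $u$ because the Kempf--Rousseau optimal destabilising parabolic $P_\lambda$ (equivalently, the Borel--Tits canonical parabolic) is proper, contains the stabiliser $C_G(u)$, and hence contains $H$, contradicting $G$-irreducibility --- is precisely the standard argument, built on the same optimality machinery ($C_G(u)\le P_\lambda$) that underlies the cited source; your closing bookkeeping on the component group via Jordan decomposition and smooth diagonalisable groups is also sound.
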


\begin{rem}\label{importantexample}
A natural analogue of Lemmas~\ref{lem1} and~\ref{lem2} for an arbitrary field $k$ is false even if $H$ is $k$-defined. Let $k$ be a nonperfect field of characteristic $2$. Let $a\in k\backslash k^2$. Let $G=PGL_2$. We write $\overline{A}$ for the image in $PGL_2$ of $A\in GL_2$. Let $H:=\left\{\overline{\left[\begin{array}{cc}
                                                      x & ay \\
                                                      y & x \\
                                                    \end{array}  
                                             \right]} \in PGL_2(\overline k) \mid x, y \in \overline k\right\}$.
Then $H$ is $G$-ir over $k$ since $H$ contains the element $u:=\overline{\left[\begin{array}{cc}
                                                      0 & a \\
                                                      1 & 0 \\
                                                    \end{array}  
                                             \right]}$, which is a $k$-anisotropic unipotent element; see~\cite[Ex.~3.8]{Uchiyama-Nonperfect-pre}. It is clear that $C_G(H)=H$ is not a torus. So, by~\cite[Thm.~2]{Nagata-complete-Kyoto}, $C_G(H)=H$ is not linearly reductive. We see that $H$ is not $G$-cr since $H$ is unipotent. 
\end{rem}

\begin{defn}
A unipotent element $u\in G(k)$ is called \emph{$k$-plongeable} if $u$ belongs to the unipotent radical of some proper $k$-parabolic subgroup of $G$. 
\end{defn}

Note that a unipotent element $u$ in Remark~\ref{importantexample} is $k$-nonplongeable. For more on $k$-nonplongeable unipotent elements, see~\cite{Gille-E8unipotent-QJM},~\cite{Tits-unipotent-Utrecht},~\cite{Tits-Note1-CollegeFrance}. Recall the following deep result which was conjectured by Tits~\cite{Tits-Note1-CollegeFrance} and proved by Gille~\cite{Gille-unipotent-Duke}:

\begin{prop}\label{GillePlong}
Let $k=k_s$. Let $G$ be a semisimple simply connected $k$-group. If $[k:k^p]\leq p$, then every unipotent subgroup of $G(k)$ is $k$-plongeable.
\end{prop}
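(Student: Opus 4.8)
Since this is a recalled theorem of Gille, I would reconstruct Tits's dynamic strategy and reduce the rationality question to a cohomological bound governed by the $p$-dimension of $k$. The plan is first to cut the problem down to its essential case. Using that a direct product of proper $k$-parabolic subgroups is proper $k$-parabolic, and that for a simply connected group the central isogeny decomposition into simple factors is compatible with the formation of $R_u(P_\lambda)$, I would reduce to $G$ simple and simply connected. Then, since a unipotent subgroup of $G(k)$ is generated by its unipotent elements, the main content is to show that every single unipotent element $u\in G(k)$ is $k$-plongeable; the passage from elements to an arbitrary unipotent subgroup is afterwards handled by a Borel--Tits type normalizer argument, replacing $u$ by the subgroup it generates and intersecting the resulting parabolics so that a common destabilizing cocharacter survives.

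For a fixed unipotent $u\in G(k)$, the strategy is the dynamic one underlying Definition~\ref{non-connected-G-cr}: I would seek $\lambda\in Y_k(G)$ with $\lim_{a\to 0}\lambda(a)u\lambda(a)^{-1}=1$, so that $u\in R_u(P_\lambda)(k)$. Over $\overline k$ such a $\lambda$ always exists, since every unipotent element of a connected reductive group lies in the unipotent radical of a Borel subgroup, so the difficulty is purely one of rationality. I would therefore recast the problem as the search for a $k$-point on the variety of parabolics (equivalently, destabilizing cocharacters) sweeping $u$ into their unipotent radical; this variety is nonempty over $\overline k$, and its failure to carry a $k$-point is measured by a torsor under the relevant centralizer group. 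Here the simply connected hypothesis is essential: it forces the relevant stabilizers to be connected and "special" enough that the obstruction class lands in a cohomology group one can hope to control, and it is exactly what rules out the $k$-anisotropic unipotent elements of adjoint type exhibited by the $PGL_2$ example in Remark~\ref{importantexample}.

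The main obstacle, and the point where the hypothesis $[k:k^p]\le p$ enters decisively, is the bounding of this obstruction. The equations one must solve to descend $\lambda$ from $\overline k$ to $k$ are inseparable in nature, built from $p$-th power relations, and their solvability is controlled by cohomological invariants of $k$ whose vanishing is guaranteed precisely by $[k:k^p]\le p$, i.e.\ by $p$-dimension at most one. Concretely, I expect the hardest step to be showing that a candidate parabolic defined a priori only over a purely inseparable extension actually descends to $k$, and that the relevant $H^1$ (or $H^2$) governing this descent is killed by the $p$-dimension bound together with simple connectedness. This inseparable descent is the delicate technical heart of the argument, and I would expect to need the full force of Tits's structure theory of unipotent elements over imperfect fields to carry it out, rather than any of the complete-reducibility machinery developed earlier in this paper.
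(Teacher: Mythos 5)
The paper does not prove Proposition~\ref{GillePlong} at all: it is recalled as a deep external result, conjectured by Tits~\cite{Tits-Note1-CollegeFrance} and proved by Gille~\cite{Gille-unipotent-Duke}, and the paper simply cites it. Your attempt therefore cannot be measured against an argument in the paper; it has to stand on its own as a proof of Gille's theorem, and it does not. What you have written is a research outline in which every step carrying the real difficulty is deferred to an expectation. Two gaps are decisive. First, the passage from unipotent elements to unipotent subgroups is not a routine "normalizer argument": if $u_1,u_2\in U$ are separately $k$-plongeable, the parabolics swallowing them need not intersect in a parabolic, and there is no reason a single $\lambda\in Y_k(G)$ should destabilize all of $U$ simultaneously. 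Your phrase "intersecting the resulting parabolics so that a common destabilizing cocharacter survives" names the desired conclusion, not a mechanism; this implication (all elements plongeable implies the subgroup is plongeable) is itself a nontrivial theorem going back to Borel--Tits, and is exactly how the subgroup statement is deduced from the element statement in the literature.

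Second, the cohomological core is unsubstantiated. You assert that the failure of the variety of destabilizing parabolics to have a $k$-point "is measured by a torsor under the relevant centralizer group" and that the governing $H^1$ (or $H^2$) "is killed by the $p$-dimension bound together with simple connectedness". But you never identify the obstruction group; the centralizer of a unipotent element in characteristic $p$ is typically non-smooth and far from reductive, so it is not "special" in any sense that yields vanishing; and no standard vanishing theorem applies, because $[k:k^p]\leq p$ bounds the imperfection degree of $k$, not its cohomological dimension (for instance $k=\mathbb{F}_p(t)$ satisfies $[k:k^p]=p$ yet has cohomological dimension $2$), so Steinberg-type vanishing for simply connected groups is unavailable. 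The claim that this inseparable-descent obstruction vanishes under the hypothesis is precisely the content of Gille's theorem, so asserting it amounts to assuming the conclusion. The parts you do carry out --- reduction to $G$ simple, and non-emptiness of the parabolic variety over $\overline{k}$ --- are the trivial portion of the problem.
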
 

The following are partial extensions of Lemmas~\ref{lem2} and~\ref{lem1} to a separably closed $k$ under the $k$-plongeability hypothesis (see Remark~\ref{lin} below).

\begin{prop}\label{linred}
Let $k=k_s$. Suppose that a $k$-subgroup $H$ of $G$ is $G$-ir over $k$. Let $C:=\overline{C_G(H)(k)}$ (or $C_G(H)$). If every unipotent element of $G(k)$ is $k$-plongeable (in particular if $G$ and $k$ satisfy the hypothesis of Proposition~\ref{GillePlong}), then every element of $C(k)$ is semisimple.
\end{prop}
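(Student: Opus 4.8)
The plan is to argue by contraposition, using $k$-plongeability to convert a putative nontrivial unipotent element of the centralizer into a violation of the $G$-irreducibility of $H$. First note that the two choices for $C$ give the same $k$-points: since $C_G(H)(k)\subseteq \overline{C_G(H)(k)}\subseteq C_G(H)$, taking $k$-points yields $\overline{C_G(H)(k)}(k)=C_G(H)(k)$, so it suffices to treat $C(k)=C_G(H)(k)$. The first reduction is to show that $C_G(H)(k)$ contains no nontrivial unipotent element; one then passes to an arbitrary $g\in C_G(H)(k)$ through its Jordan decomposition $g=g_sg_u$, for the unipotent part $g_u$ lies in $\overline{\langle g\rangle}\subseteq C_G(H)$, so if $g_u$ is again a $k$-point it belongs to $C_G(H)(k)$, forcing $g_u=1$ and hence $g=g_s$ semisimple.

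So I would suppose $u\in C_G(H)(k)$ is unipotent with $u\neq 1$. Then $u$ is a unipotent element of $G(k)$, so by hypothesis (in particular under the hypotheses of Proposition~\ref{GillePlong}) it is $k$-plongeable: there is a proper $k$-parabolic subgroup $P$ of $G$ with $u\in R_u(P)$, and since $k=k_s$ and $G$ is connected I may write $P=P_\lambda$ for some $\lambda\in Y_k(G)$. Set $U:=\overline{\langle u\rangle}$. This is a nontrivial $k$-defined unipotent subgroup (in characteristic $p$ it is the finite group $\langle u\rangle\subseteq G(k)$, and in characteristic $0$ a $k$-defined copy of $\mathbb{G}_a$), and because $R_u(P_\lambda)$ is closed we have $U\subseteq R_u(P_\lambda)$.

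Next I claim $U$ is not $G$-cr over $k$. Indeed, $U$ is contained in the proper $k$-parabolic $P_\lambda$, so if $U$ were $G$-cr over $k$ it would lie in some $k$-defined $R$-Levi subgroup $L_\mu$ of $P_\lambda$; but an $R$-Levi subgroup meets the unipotent radical trivially, whence $U\subseteq L_\mu\cap R_u(P_\lambda)=\{1\}$, contradicting $u\neq 1$. Now $H$ is a $k$-subgroup of $G$ centralizing $u$, hence normalizing $U$, so Proposition~\ref{handy} applied to the non-$G$-cr subgroup $U$ and the normalizing $k$-subgroup $H$ produces a proper $k$-parabolic subgroup $Q$ of $G$ containing both $U$ and $H$. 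In particular $H\subseteq Q$ with $Q$ a proper $k$-parabolic, contradicting that $H$ is $G$-ir over $k$. This establishes that $C_G(H)(k)$ has no nontrivial unipotent element.

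The step I expect to be the main obstacle is the reduction from ``no nontrivial unipotent element in $C_G(H)(k)$'' to ``every element of $C_G(H)(k)$ is semisimple'', i.e.\ the assertion that the unipotent part $g_u$ of a $k$-point $g$ is itself a $k$-point. Over a perfect field this is automatic, but over an imperfect separably closed $k$ the Jordan decomposition of a $k$-point need not be $k$-rational, and correspondingly a $k$-defined commutative subgroup such as $\overline{\langle g\rangle}$ can be $G$-cr over $k$ while having geometrically non-semisimple elements (this is precisely the wound-form phenomenon underlying Remark~\ref{importantexample}). I would secure the reduction by working within the setting of Proposition~\ref{GillePlong}, where $G$ is semisimple and simply connected: there the characteristic polynomial data forcing the semisimple part have separable roots over $k=k_s$, so $g_s,g_u\in G(k)$, and the argument above closes. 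Pinning down exactly this $k$-rationality of the unipotent part, rather than the geometric/plongeability input, is where the real care is needed.
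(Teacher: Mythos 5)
Your reduction $\overline{C_G(H)(k)}(k)=C_G(H)(k)$ and your core argument --- $k$-plongeability places $u$ in $R_u(P)$ for a proper $k$-parabolic $P$, hence $\langle u\rangle$ is not $G$-cr over $k$ (it cannot lie in an $R$-Levi of $P$), hence Proposition~\ref{handy}, applied to the non-$G$-cr subgroup $\langle u\rangle$ and the normalizing $k$-subgroup $H$, yields a proper $k$-parabolic containing $H$, contradicting $G$-irreducibility over $k$ --- is exactly the paper's proof. The paper stops there and simply asserts ``therefore every element of $C(k)$ is semisimple''; it offers no argument for passing from ``$C(k)$ has no nontrivial unipotent element'' to ``every element of $C(k)$ is semisimple''. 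So the subtlety you isolate in your final paragraph is real, and you are right that it is not automatic over a nonperfect separably closed field; the paper itself passes over it in silence.

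However, your proposed repair of that step is false, and this is a genuine gap in your write-up. Over a separably closed nonperfect $k$ irreducible polynomials can be inseparable (e.g.\ $x^2-a$ with $a\in k\setminus k^2$ in characteristic $2$), and such polynomials do occur in characteristic polynomials of $k$-points of semisimple simply connected groups: take $G=SL_4$, $B=\left(\begin{smallmatrix}0&a\\1&0\end{smallmatrix}\right)$ and $g=\mathrm{diag}(B,B^{-1})\in SL_4(k)$; then $g_s=\mathrm{diag}(\sqrt a,\sqrt a,1/\sqrt a,1/\sqrt a)$ and $g_u=g_s^{-1}g$ are \emph{not} $k$-points, so ``semisimple simply connected $\Rightarrow g_s,g_u\in G(k)$'' fails. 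The same example shows the reduction cannot be formal: $\overline{\langle g\rangle}(k)$ contains no nontrivial unipotent element, yet contains the non-semisimple element $g$. What a correct repair must do is invoke the hypothesis on $H$ a second time rather than any rationality of Jordan parts: in the example, $g^2=\mathrm{diag}(a,a,a^{-1},a^{-1})$ is a rational non-central semisimple element, so $C_G(g)\subseteq C_G(g^2)$ lies inside a proper $k$-parabolic (here $C_G(g^2)$ is a proper $k$-Levi), and therefore no subgroup that is $G$-ir over $k$ can centralize such a $g$; in general one argues via a non-central $k$-torus of $\overline{\langle g\rangle}$, falling back on your unipotent argument when that torus is central. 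Finally, note that even if your repair had worked, it would establish the proposition only under the hypotheses of Proposition~\ref{GillePlong}, which is strictly weaker than the statement proved in the paper, whose only assumption is $k$-plongeability of the unipotent elements of $G(k)$.
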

\begin{proof}
Let $C:=\overline{C_G(H)(k)}$. Suppose that there exists a non-trivial unipotent element $u\in C(k)$. Since we assumed that every unipotent element of $G(k)$ (hence every element of $C(k)$) is $k$-plongeable, there exists a proper $k$-parabolic subgroup $P$ of $G$ such that $u\in R_u(P)$. Then, it is clear that the subgroup $U:=\langle u \rangle$ is not $G$-cr over $k$. Since $H$ normalizes $U$ and $H$ is $k$-defined, by Proposition~\ref{handy}, there exists a proper $k$-parabolic subgroup $P'$ of $G$ containing $U$ and $H$. This is a contradiction. Therefore every element of $C(k)$ is semisimple. The same argument works for $C:=C_G(H)$. 
\end{proof}      
\begin{prop}\label{linred2}
Let $k=k_s$. Let $H$ be a subgroup of $G$. Suppose that $C:=\overline{C_G(H)(k)}$ is $G$-ir over $k$. If every unipotent element of $H(k)$ is $k$-plongeable, then every element of $H(k)$ is semisimple. 
\end{prop}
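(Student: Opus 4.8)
The plan is to run the argument of Proposition~\ref{linred} with the roles of $H$ and $C:=\overline{C_G(H)(k)}$ interchanged, now exploiting the $G$-irreducibility of $C$ in place of that of $H$. I would argue by contradiction: suppose $H(k)$ contains a non-trivial unipotent element $u$. By hypothesis $u$ is $k$-plongeable, so there is a proper $k$-parabolic subgroup $P$ of $G$ with $u\in R_u(P)$; then $U:=\langle u\rangle$ is not $G$-cr over $k$, since $U<R_u(P)$ is a non-trivial unipotent group, and $R_u(P)$ meets every $R$-Levi subgroup of $P$ trivially, so $U$ lies in no $R$-Levi subgroup of $P$.

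The decisive step is to supply a $k$-defined subgroup that normalizes $U$ and is $G$-ir over $k$, so that Proposition~\ref{handy} produces a contradiction. The group $C$ is designed for exactly this: it is $k$-defined, being the maximal $k$-defined subgroup of $C_G(H)$, and since $C<C_G(H)$ it centralizes all of $H$, in particular $u\in H(k)$, whence $C$ normalizes $U=\langle u\rangle$. I would then apply Proposition~\ref{handy} to the non-$G$-cr subgroup $U$ together with the normalizing $k$-subgroup $C$, obtaining a proper $k$-parabolic subgroup of $G$ containing both $U$ and $C$ --- contradicting the hypothesis that $C$ is $G$-ir over $k$. Hence $H(k)$ has no non-trivial unipotent element.

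The hard part, as in Proposition~\ref{linred}, will be the final passage from ``$H(k)$ contains no non-trivial unipotent element'' to ``every element of $H(k)$ is semisimple'': over a non-perfect $k=k_s$ the unipotent part of a $k$-rational element need not be $k$-rational, so this implication is more delicate than over a perfect field and must be justified in the same manner as in the proof of Proposition~\ref{linred}. I would also stress the reason the hypothesis is formulated with $C=\overline{C_G(H)(k)}$ rather than $C_G(H)$: Proposition~\ref{handy} requires the normalizing group to be $k$-defined, so the maximal $k$-defined subgroup of the centralizer is precisely what the argument needs; moreover $H$ itself need not be $k$-defined here, since all the $k$-structure required for Proposition~\ref{handy} is carried by $C$.
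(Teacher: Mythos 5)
Your argument is exactly the paper's: its entire proof of Proposition~\ref{linred2} reads ``Swap the roles of $H$ and $\overline{C_G(H)(k)}$ in the proof of Proposition~\ref{linred}'', which is precisely what you carry out ($u\in H(k)$ non-trivial unipotent and $k$-plongeable gives $u\in R_u(P)$, so $U=\langle u\rangle$ is not $G$-cr over $k$; $C$ is $k$-defined and centralizes $u$, hence normalizes $U$, so Proposition~\ref{handy} produces a proper $k$-parabolic subgroup containing $U$ and $C$, contradicting that $C$ is $G$-ir over $k$). The one caveat concerns the step you defer: the passage from ``$H(k)$ contains no non-trivial unipotent element'' to ``every element of $H(k)$ is semisimple'' is not in fact ``justified in the same manner as in Proposition~\ref{linred}'', because the paper's proof of Proposition~\ref{linred} asserts that implication with no argument at all --- so your write-up is, if anything, more candid than the source about where the real delicacy lies.
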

\begin{proof}
Swap the roles of $H$ and $\overline{C_G(H)(k)}$ in the proof of Proposition~\ref{linred}.
\end{proof}
\begin{rem}\label{lin}
We do not know whether $C$ in Proposition~\ref{linred} (or $H$ in Proposition~\ref{linred2}) is linearly reductive. For that purpose we need to know whether every element of $C(\overline k)$ (or $H(\overline k)$) is semisimple~\cite[Thm.~2]{Nagata-complete-Kyoto}. 
\end{rem}

Recall that~\cite[Def.~3.27]{Bate-geometric-Inventione}, a subgroup $H$ of $G$ is called \emph{separable} if the scheme theoretic centralizer of $H$ in $G$ (in the sense of~\cite[Def.~A.1.9]{Conrad-pred-book}) is smooth. It is known that every subgroup of $GL_n$ is separable~\cite[Ex.~3.28]{Bate-geometric-Inventione}. The following is a generalization of~\cite[Lem.~2.6 and Cor.~3.17]{Bate-geometric-Inventione}.

\begin{prop}
Suppose that a $k$-subgroup $H$ of $G$ is linearly reductive. Then $C_G(H)$ is $k$-defined and $G$-cr over $k$. 
\end{prop}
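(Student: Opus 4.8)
The plan is to prove the two assertions separately: first that $C_G(H)$ is $k$-defined, and then that it is $G$-cr over $k$. The key structural observation is that once $k$-definedness of $C_G(H)$ is in hand, the second assertion can be proved over $k_s$ and descended to $k$: since $G$ is connected and $C_G(H)$ is then a $k$-subgroup, Proposition~\ref{separable} reduces ``$C_G(H)$ is $G$-cr over $k$'' to ``$C_G(H)$ is $G$-cr over $k_s$''. So the real content is to establish $k$-definedness and then to run the rational centralizer argument over the separably closed field $k_s$.

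For $k$-definedness, I would argue that a linearly reductive $k$-subgroup $H$ is separable in $G$, i.e.\ the scheme-theoretic centralizer $\underline{C}_G(H)$ (in the sense of~\cite[Def.~A.1.9]{Conrad-pred-book}) is smooth. Linear reductivity forces the adjoint $H$-module $\operatorname{Lie}(G)$ to be completely reducible, which is exactly the mechanism behind separability; this is the generalization of the fact, recalled just above, that every subgroup of $GL_n$ is separable (cf.~\cite{Bate-separability-TransAMS}). Because $H$ and the conjugation action are $k$-defined, the fixed-point scheme $\underline{C}_G(H)$ is $k$-defined, and smoothness means it coincides with its reduced subscheme, namely $C_G(H)$. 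Hence $C_G(H)$ is a smooth $k$-subgroup, i.e.\ $k$-defined; alternatively one invokes the $k$-definability criterion of Lemma~\ref{centralizerkdefined} (or~\cite[Prop.~7.4]{Bate-cocharacter-Arx}), whose hypothesis is precisely separability of $H$ in $G$.

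With $k$-definedness established, I pass to $k_s$. Linear reductivity is preserved under base field extension (by Nagata's characterization it depends only on the geometric group $H_{\overline{k}}$), so $H$ is a linearly reductive $k_s$-subgroup of $G$, and hence $H$ is $G$-cr over $k_s$ by~\cite[Cor.~9.8]{Bate-cocharacter-Arx}. Since $H$ is in particular $G$-cr over $\overline{k}$, Proposition~\ref{algclocentralizer} shows $C_G(H)$ is reductive (a geometric property, so insensitive to the field of definition), and centralizers commute with the base change $k \to k_s$, so $C_G(H)$ is a reductive $k_s$-subgroup. Now Theorem~\ref{main}(2), applied with base field $k_s = (k_s)_s$ and $G$ connected, yields that $C_G(H)$ is $G$-cr over $k_s$. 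Finally Proposition~\ref{separable} descends this to the conclusion that $C_G(H)$ is $G$-cr over $k$.

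I expect the main obstacle to be the first step, the $k$-definedness of $C_G(H)$: everything else is an assembly of results already stated in the excerpt, but the separability (smoothness of the fixed-point scheme) of a linearly reductive subgroup in arbitrary characteristic is the genuine technical input, and care is needed to quote it in the scheme-theoretic form that gives $k$-definedness of the reduced centralizer rather than merely reductivity over $\overline{k}$. By contrast, the upgrade from the geometric statement (Lemma~2.6 together with Corollary~3.17 of~\cite{Bate-geometric-Inventione}) to the rational one is purely formal once Theorem~\ref{main} and Proposition~\ref{separable} are available.
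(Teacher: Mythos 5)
Your proposal is correct and takes essentially the same route as the paper's proof: linear reductivity gives separability of $H$ in $G$, hence $k$-definedness of $C_G(H)$ via (the proof of)~\cite[Prop.~7.4]{Bate-cocharacter-Arx}, and it gives geometric $G$-complete reducibility of $H$, hence reductivity of $C_G(H)$, so that Theorem~\ref{main}(2) applies. If anything, you are more explicit than the paper, which cites Theorem~\ref{main} directly even though its hypothesis requires $k=k_s$; your spelled-out detour through $k_s$ (using~\cite[Cor.~9.8]{Bate-cocharacter-Arx}) followed by descent via Proposition~\ref{separable} --- which is exactly where the $k$-definedness of $C_G(H)$ is needed --- is the argument the paper leaves implicit.
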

\begin{proof}
Since $H$ is linearly reductive, it is $G$-cr~\cite[Lem.~2.6]{Bate-geometric-Inventione}. So $C_G(H)$ is reductive by~\cite[Prop.~3.12]{Bate-geometric-Inventione}. Then $C_G(H)$ is $G$-cr over $k$ by Theorem~\ref{main}. Since $H$ is linearly reductive, $H$ is separable in $G$. So, by the proof of~\cite[Prop.~7.4]{Bate-cocharacter-Arx}, $C_G(H)$ is $k$-defined since $H$ is $k$-defined.  
\end{proof}

\section{Centralizers of completely reducible subgroups}
\begin{proof}[Proof of Theorem~\ref{main}]
We start with the first part of the theorem. Let $C:=\overline{C_G(H)(k)}$. Let $P$ be a $k$-parabolic subgroup of $G$ such that $HC < P$. Since $H$ is $G$-cr over $k$, there exists a $k$-Levi subgroup $L$ of $P$ with $H < L$. Let $\lambda$ be a $k$-cocharacter of $G$ such that $P=P_\lambda$ and $L=L_\lambda$. Then $\lambda$ is a cocharacter of $C$. We have $C=(C\cap L)(C \cap R_u(P))$. Since $\lambda$ normalizes $C$, by~\cite[Prop.~2.2]{Bate-cocharacter-Arx} $C\cap R_u(P)$ is $k$-defined. For $u\in (C \cap R_u(P))(k)$, we define a $k$-morphism $\phi_u: \overline{k}\rightarrow C\cap R_u(P)$ by $\phi_u(0)=1$ and $\phi_u(t)=\lambda(t)u\lambda(t)^{-1}$ for $t\in \overline{k}^*$. Then the image of $\phi_u$ is a connected $k$-subvariety of $C\cap R_u(P)$ containing $1$ and $\phi_u(1)=u$. Then $C\cap R_u(P)$ must be trivial since $C$ is pseudo-reductive. Thus $C< L$. Therefore $HC$ is $G$-cr over $k$. Note that $C$ is a normal subgroup of the $k$-defined subgroup $HC$. So by~\cite[Prop.~3.5]{Uchiyama-Nonperfect-pre}, $C$ is $G$-cr over $k$.  

For the second part, the same argument shows that $HC_G(H)$ is $G$-cr over $k$ since we assumed that $C_G(H)$ is reductive. However, if $C_G(H)$ is not $k$-defined we cannot apply~\cite[Prop.~3.5]{Uchiyama-Nonperfect-pre} to conclude that $C_G(H)$ is $G$-cr over $k$. We need a different argument. Let $P'$ be a minimal $k$-parabolic  subgroup of $G$ containing $H C_G(H)$. Since $H C_G(H)$ is $G$-cr over $k$, there exists a $k$-Levi subgroup $L'$ of $P'$ containing $H C_G(H)$. If $C_G(H)$ is $L'$-cr over $k$, it is $G$-cr over $k$ by~\cite[Lem.~3.4]{Uchiyama-Nonperfect-pre}. Otherwise, by~\cite[Lem.~3.4]{Uchiyama-Nonperfect-pre} and Proposition~\ref{handy}, there exist a proper $k$-parabolic subgroup $P_{L'}$ of $L'$ containing $C_G(H)$ and $H$ since $H$ is $k$-defined and $H$ normalizes $C_G(H)$. Note that $P_{L'}\ltimes R_u(P')$ is a $k$-parabolic subgroup of $G$ by~\cite[Prop.~4.4(c)]{Borel-Tits-Groupes-reductifs} and it is properly contained in $P'$. This contradicts the minimality of $P'$.  
\end{proof}

The following are further consequences of Theorem~\ref{TCCgroup}, and they all deal with special cases of Open Problem~\ref{centralizerquestion}.
\begin{prop}\label{unipotent}
Let $k=k_s$. Let $G$ be connected. If a $k$-subgroup $H$ of $G$ is $G$-cr over $k$ and if $\overline{C_G(H)(k)}^{\circ}$ (or ${C_G(H)}^{\circ}$) is unipotent, then $\overline{C_G(H)(k)}$ (or $C_G(H)$) is $G$-cr over $k$. 
\end{prop}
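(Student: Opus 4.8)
The plan is to argue by contradiction and reduce everything to Proposition~\ref{handy}, exploiting that unipotence of the identity component of the centralizer is incompatible with housing the image of a nontrivial cocharacter. Write $C := \overline{C_G(H)(k)}$; the argument for $C_G(H)$ is identical and I will indicate the only change at the end. Since $C < C_G(H)$, the group $H$ centralizes $C$, so the $k$-subgroup $H$ normalizes $C$. Suppose, for contradiction, that $C$ is not $G$-cr over $k$. Then, because $H$ is $k$-defined and normalizes $C$, Proposition~\ref{handy} yields a proper $k$-parabolic subgroup $P$ of $G$ containing both $C$ and $H$.

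Next I extract a cocharacter from the $G$-complete reducibility of $H$. Since $H < P$ and $H$ is $G$-cr over $k$, there is a $k$-defined $R$-Levi subgroup $L$ of $P$ with $H < L$; as $k = k_s$ I may write $L = L_\lambda$ and $P = P_\lambda$ for some $\lambda \in Y_k(G)$. From $H < L_\lambda = C_G(\lambda(\overline k^*))$ it follows that $\lambda(\overline k^*) < C_G(H)$. Because $\lambda$ is $k$-defined, $\lambda(\overline k^*)$ is a $k$-defined connected subgroup of $C_G(H)$, hence lies in the maximal $k$-defined subgroup $C = \overline{C_G(H)(k)}$, and being connected lies in $C^\circ$. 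Since $P$ is a proper $k$-parabolic, $\lambda$ is non-central, so $\lambda(\overline k^*)$ is a nontrivial torus. But $C^\circ$ is unipotent and cannot contain a nontrivial torus, a contradiction. Therefore $C$ is $G$-cr over $k$.

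For the variant $C_G(H)$ one proceeds identically: $H$ centralizes $C_G(H)$, hence normalizes it, and assuming $C_G(H)$ is not $G$-cr over $k$, Proposition~\ref{handy} again produces a proper $k$-parabolic containing $C_G(H)$ and $H$; the same extraction gives $\lambda(\overline k^*) < C_G(H)^\circ$, a nontrivial torus inside a unipotent group, which is impossible. The main point to get right is that properness of $P$ forces $\lambda$ to be non-central, so that its image is a genuine torus, which is exactly what collides with unipotence of the identity component. I expect the only real subtlety to be the bookkeeping of $k$-definedness: routing the argument through Proposition~\ref{handy} (rather than trying to directly produce a $k$-Levi of $P$ containing $C$) avoids the difficulty that $C_G(H)$ need not be $k$-defined, and it bypasses the pseudo-reductivity computation used in Theorem~\ref{main}, which is unavailable here since a nontrivial unipotent $C^\circ$ is never pseudo-reductive.
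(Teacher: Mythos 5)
Your proof is correct and follows essentially the same route as the paper: invoke Proposition~\ref{handy} to get a proper $k$-parabolic $P_\lambda$ containing both $H$ and the centralizer, use $G$-complete reducibility of $H$ to place $H$ in a $k$-defined $R$-Levi $L_\lambda$, and then derive a contradiction from a nontrivial torus $\lambda(\overline k^*)$ sitting inside the unipotent identity component. The only (welcome) difference is that you spell out the $k$-definedness step showing $\lambda(\overline k^*)$ lands in $\overline{C_G(H)(k)}$, which the paper leaves implicit in its phrase ``the other case can be shown in the same way.''
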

\begin{proof}
Let $C:=C_G(H)$. Suppose that $C$ is not $G$-cr over $k$. By Proposition~\ref{handy} there exists a $k$-defined proper parabolic subgroup $P_\lambda$ of $G$ containing $H$ and $C$. Then there exists a $k$-Levi subgroup $L$ of $P_\lambda$ containing $H$ since $H$ is $G$-cr over $k$. Without loss, we assume $L=L_\lambda$. Then $\lambda(\overline k^*)<C^{\circ}$ since  $\lambda(\overline k^*)$ is connected. So $\lambda$ must be trivial since $C^{\circ}$ is unipotent. This is a contradiction. The other case can be shown in the same way.
\end{proof}

\begin{rem}
See~\cite[Sec.~4,5]{Uchiyama-Nonperfect-pre} for examples of a $k$-subgroup $H$ of connected $G$ (or non-connected $G$) such that: 1.~$H$ is $G$-cr over $k$, 2.~$C_G(H)$ (or $\overline{C_G(H)(k_s)}$) is unipotent.
\end{rem}

\begin{cor}\label{cent}
Let $k=k_s$. Let $G$ be connected. Let $H$ be a $k$-subgroup of $G$. If $H$ is $G$-ir over $k$, then $\overline{C_G(H)(k)}$ is $G$-cr over $k$.
\end{cor}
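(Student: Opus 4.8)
The plan is to deduce this directly from Proposition~\ref{handy}, by viewing $\overline{C_G(H)(k)}$ as a $k$-subgroup of $G$ that is normalized by $H$. Write $C:=\overline{C_G(H)(k)}$, the unique maximal $k$-defined subgroup of $C_G(H)$. Before running the main argument I would first verify that $H$ normalizes $C$. Clearly $H$ normalizes $C_G(H)$: for $h\in H$ the inner automorphism $\mathrm{Int}(h)$ sends $C_G(H)$ to $C_G(hHh^{-1})=C_G(H)$. For $h\in H(k)$ this inner automorphism is defined over $k$ and fixes $H$ setwise, so it carries $C_G(H)(k)$ into itself and hence preserves its Zariski closure $C$. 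Since $k=k_s$, the set $H(k)$ is dense in $H$ by~\cite[AG.13.3]{Borel-AG-book}, so $H$ itself normalizes $C$.

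With this in hand I would argue by contradiction. Suppose $C$ is not $G$-cr over $k$. Since $H$ is a $k$-subgroup of $G$ normalizing $C$, and $G$ is connected with $k=k_s$, Proposition~\ref{handy}—applied with $C$ in the role of the subgroup that fails to be $G$-cr over $k$ and $H$ in the role of the normalizing $k$-subgroup $N$—produces a proper $k$-parabolic subgroup $P$ of $G$ containing both $C$ and $H$. But $H\subseteq P$ with $P$ a proper $k$-parabolic subgroup contradicts the hypothesis that $H$ is $G$-ir over $k$. Hence $C$ is $G$-cr over $k$, as claimed.

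I expect no serious obstacle here: once Proposition~\ref{handy} is available the statement is almost immediate, and the only point requiring care is the verification that $H$ normalizes the maximal $k$-defined subgroup $C$ rather than merely $C_G(H)$ itself. This is precisely where the separable closedness of $k$ and the density of $H(k)$ enter. Note that, in contrast with the proof of Theorem~\ref{main}, this argument requires no reductivity or pseudo-reductivity hypothesis on the centralizer, precisely because $G$-irreducibility of $H$ excludes every proper $k$-parabolic overgroup simultaneously.
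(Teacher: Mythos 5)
Your proposal is correct and follows essentially the same route as the paper: assume $\overline{C_G(H)(k)}$ is not $G$-cr over $k$, apply Proposition~\ref{handy} with $H$ as the normalizing $k$-subgroup to get a proper $k$-parabolic subgroup containing both, and contradict $G$-irreducibility of $H$ over $k$. Your explicit verification that $H$ normalizes $\overline{C_G(H)(k)}$ (via density of $H(k)$) is a detail the paper leaves implicit, but it is exactly the right justification.
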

\begin{proof}
By~Proposition~\ref{handy}, there exists a proper $k$-parabolic subgroup of $G$ containing $H$ and $\overline{C_G(H)(k)}$. This is a contradiction since $H$ is $G$-ir over $k$.
\end{proof}

\begin{cor}\label{centralizer1}
Let $k=k_s$. Let $G$ be connected. Let $H$ be a (possibly non-$k$-defined) subgroup of $G$. If $\overline{C_G(H)(k)}$ is $G$-ir over $k$, then $H$ is $G$-cr over $k$.
\end{cor}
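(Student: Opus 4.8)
The plan is to argue by contradiction, mirroring the proof of Corollary~\ref{cent} with the roles of $H$ and its centralizer interchanged. Suppose $H$ is not $G$-cr over $k$. I would like to apply Proposition~\ref{handy}, so I first need a $k$-subgroup of $G$ that normalizes $H$. The natural candidate is $N:=\overline{C_G(H)(k)}$ itself. As recorded in the introduction, $\overline{C_G(H)(k)}$ is $k$-defined (being $k_s$-defined and $\Gamma$-stable, via~\cite[Prop.~14.2]{Borel-AG-book}), so it is a genuine $k$-subgroup of $G$.

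Next I would check that $N$ normalizes $H$. In fact $N$ centralizes $H$: for each fixed $h\in H$ the commutator map $g\mapsto ghg^{-1}h^{-1}$ is a morphism that vanishes on $C_G(H)(k)$, and since the locus where it equals the identity is Zariski closed, it vanishes on the closure $\overline{C_G(H)(k)}$ as well. Hence $N=\overline{C_G(H)(k)}<C_G(H)$, and in particular $N$ normalizes $H$.

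With these two points in hand, Proposition~\ref{handy} applies (using $k=k_s$ and $G$ connected) and yields a proper $k$-parabolic subgroup $P$ of $G$ containing both $H$ and $N=\overline{C_G(H)(k)}$. But then $\overline{C_G(H)(k)}$ lies in a proper $k$-parabolic subgroup of $G$, contradicting the hypothesis that $\overline{C_G(H)(k)}$ is $G$-ir over $k$. Therefore $H$ must be $G$-cr over $k$. I do not expect a genuine obstacle here: the only nonroutine ingredients are the $k$-definability of $\overline{C_G(H)(k)}$ and the fact that it centralizes $H$, both of which are immediate from the preliminaries, after which Proposition~\ref{handy} does all the work.
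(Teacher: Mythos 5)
Your proof is correct and is essentially the paper's own argument: the paper's proof of this corollary just says that the proof of Corollary~\ref{cent} works, i.e.\ apply Proposition~\ref{handy} with the roles of $H$ and $\overline{C_G(H)(k)}$ interchanged, which is precisely what you did. The two auxiliary points you check ($k$-definability of $\overline{C_G(H)(k)}$ and that it centralizes, hence normalizes, $H$) are exactly the facts implicitly invoked there.
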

\begin{proof}
The same proof as that of Corollary~\ref{cent} works. 
\end{proof}

\begin{rem}
In Corollary~\ref{centralizer1}, we cannot replace $\overline{C_G(H)(k)}$ by $C_G(H)$ even if $H$ is $k$-defined; if $C_G(H)$ is not $k$-defined, there might not be any proper $k$-parabolic subgroup containing $H$ and $C_G(H)$. It would be interesting to know whether such examples exist. 
\end{rem} 
By a similar argument to that in the proof of Corollary~\ref{cent}, we obtain:

\begin{cor}\label{normalizer-of-Gcrsubgroup}
Let $k=k_s$. Let $G$ be connected. If a $k$-subgroup $H$ of $G$ is $G$-ir over $k$, then $N_G(H)$ and $\overline{N_G(H)(k)}$ are $G$-cr over $k$.
\end{cor}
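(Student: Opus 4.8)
The plan is to reuse the contradiction argument from the proof of Corollary~\ref{cent} almost verbatim, the only genuinely new ingredient being that both $N_G(H)$ and $\overline{N_G(H)(k)}$ are normalized by $H$, so that Proposition~\ref{handy} applies with $N=H$.

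First I would record the elementary group-theoretic fact that $H$ normalizes $N_G(H)$. For $h\in H$ and $n\in N_G(H)$ one checks directly that $(hnh^{-1})H(hnh^{-1})^{-1}=hnHn^{-1}h^{-1}=hHh^{-1}=H$, using $h^{-1}Hh=H$ and $nHn^{-1}=H$; hence $hnh^{-1}\in N_G(H)$. Since $H$ is $k$-defined and normalizes $N_G(H)$, conjugation by $H$ sends the unique maximal $k$-defined subgroup $\overline{N_G(H)(k)}$ of $N_G(H)$ to a $k$-defined subgroup of $N_G(H)$ of the same dimension, so by the uniqueness recorded in the introduction (exactly as for $\overline{C_G(H)(k_s)}$), $H$ also normalizes $\overline{N_G(H)(k)}$.

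With this in hand, suppose $N_G(H)$ is not $G$-cr over $k$. Since $H$ is a $k$-subgroup normalizing $N_G(H)$, Proposition~\ref{handy} produces a proper $k$-parabolic subgroup of $G$ containing both $N_G(H)$ and $H$, contradicting the $G$-irreducibility of $H$ over $k$. The identical argument with $\overline{N_G(H)(k)}$ in place of $N_G(H)$ shows that $\overline{N_G(H)(k)}$ is $G$-cr over $k$. The only step requiring any care — and the one I expect to be the mildest obstacle — is the verification that $H$ normalizes $\overline{N_G(H)(k)}$, which rests on the uniqueness of the maximal $k$-defined subgroup together with the fact that $H$-conjugation preserves $k$-definedness; everything else is a direct transcription of the proof of Corollary~\ref{cent}.
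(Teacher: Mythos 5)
Your proposal is correct and is essentially the paper's own argument: the paper proves Corollary~\ref{normalizer-of-Gcrsubgroup} simply ``by a similar argument to that in the proof of Corollary~\ref{cent}'', i.e.\ assume the subgroup in question is not $G$-cr over $k$, apply Proposition~\ref{handy} with $N=H$, and contradict the $G$-irreducibility of $H$ over $k$; your write-up fills in exactly the normality verifications this requires.

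One small repair is needed in your justification that $H$ normalizes $\overline{N_G(H)(k)}$. You invoke ``the fact that $H$-conjugation preserves $k$-definedness'', but this is false for an arbitrary $h\in H$: if $h$ is not a $k$-point, conjugation by $h$ is not a $k$-defined automorphism of $G$, so it need not carry $k$-defined subgroups to $k$-defined subgroups. The correct version of your argument runs: for $h\in H(k)$, conjugation by $h$ \emph{is} $k$-defined and maps $N_G(H)$ onto itself, hence maps the unique maximal $k$-defined subgroup $\overline{N_G(H)(k)}$ of $N_G(H)$ onto a $k$-defined subgroup of $N_G(H)$, which by uniqueness (applied to the conjugation and its inverse) equals $\overline{N_G(H)(k)}$; then, since $k=k_s$ and $H$ is $k$-defined, $H(k)$ is dense in $H$, and as the normalizer of the closed subgroup $\overline{N_G(H)(k)}$ is closed, all of $H$ normalizes $\overline{N_G(H)(k)}$. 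This density step is the same maneuver the paper itself uses in the proof of Proposition~\ref{handy}. With that one-line fix, your proof is complete and coincides with the intended one.
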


\begin{cor}
Let $k=k_s$. Let $G$ be connected. Let $H$ be a (possibly non-$k$-defined) subgroup of $G$. If $\overline{N_G(H)(k)}$ is $G$-ir over $k$, then $H$ is $G$-cr over $k$.
\end{cor}
\noindent It is natural to ask:
\begin{oprob}\label{normalizerquestion}
Let $k$ be a field. Suppose that a $k$-subgroup $H$ of $G$ is $G$-cr over $k$. Is $N_G(H)$ $G$-cr over $k$? Is $\overline{N_G(H)(k_s)}$ $G$-cr over $k$? 
\end{oprob}

Propositions~\ref{prop1} and~\ref{prop2} below show that if we allow $H$ to be non-$k$-defined, the answer to Open Problem~\ref{centralizerquestion} is no. First we need~\cite[Prop.~7.4]{Bate-cocharacter-Arx}

\begin{lem}\label{centralizerkdefined}
Let $k$ be a field. If a $k$-subgroup $H$ of $G$ is separable in $G$, then $C_G(H)^{\circ}$ is $k$-defined. 
\end{lem}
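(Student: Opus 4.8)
The plan is to compare the set-theoretic centralizer $C_G(H)$ with the scheme-theoretic centralizer $\underline{C}_G(H)$ and to show that the separability hypothesis forces these to agree. First I would record that, since $H$ is a $k$-subgroup, the scheme-theoretic centralizer $\underline{C}_G(H)$ in the sense of~\cite[Def.~A.1.9]{Conrad-pred-book} is a closed $k$-subgroup scheme of $G$: the centralizer functor is defined over $k$, so it is represented by a $k$-group scheme. The essential point here is that this holds with no smoothness assumption---$\underline{C}_G(H)$ is always $k$-defined---but a priori it may fail to be reduced, and its underlying reduced variety (the classical centralizer) need not descend to $k$ when $k$ is nonperfect.

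Next I would invoke the separability hypothesis. By definition~\cite[Def.~3.27]{Bate-geometric-Inventione}, separability of $H$ in $G$ means exactly that $\underline{C}_G(H)$ is smooth. A smooth $k$-group scheme is geometrically reduced, so base change to $\overline{k}$ gives that $\underline{C}_G(H)_{\overline{k}}$ is reduced; since its geometric points are precisely the set-theoretic centralizer $C_G(H)$, we obtain $\underline{C}_G(H)_{\overline{k}} = C_G(H)$ as reduced varieties. Thus the classical centralizer $C_G(H)$ carries the $k$-structure coming from $\underline{C}_G(H)$, i.e. it is $k$-defined; note this already proves the stronger statement that $C_G(H)$ itself (not just its identity component) is $k$-defined, which is what is actually used in the preceding proposition. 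Finally, the identity component of a finite-type $k$-group scheme is an open-and-closed $k$-subgroup scheme, hence $k$-defined; applying this to $\underline{C}_G(H)$ yields that $C_G(H)^{\circ} = \underline{C}_G(H)^{\circ}$ is $k$-defined, as claimed.

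The step I expect to be the main obstacle---and the reason separability cannot be dropped---is the descent of the reduced structure. Over a nonperfect $k$, a $k$-group scheme $X$ can have $(X_{\overline{k}})_{\mathrm{red}}$ not defined over $k$, so one genuinely cannot pass from the always-$k$-defined scheme-theoretic centralizer to the classical centralizer without extra input. Smoothness is precisely the hypothesis that closes this gap, since it makes $\underline{C}_G(H)$ already (geometrically) reduced and therefore equal to $C_G(H)$ on the nose. Phrasing the conclusion in terms of the identity component is the safe formulation, as it sidesteps any residual subtlety in the Galois action on the finite component group, although the argument above shows the full centralizer is $k$-defined as well.
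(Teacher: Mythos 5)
Your proof is correct, and it is essentially the intended argument: the paper itself gives no proof of this lemma, importing it wholesale from~\cite[Prop.~7.4]{Bate-cocharacter-Arx} (the sentence preceding the lemma reads ``First we need [Prop.~7.4]\ldots''), and the standard proof of that result is precisely your descent-via-smoothness mechanism. Your chain --- the scheme-theoretic centralizer is always a closed $k$-subgroup scheme because the fixed-point functor for the conjugation action of the $k$-group $H$ on the affine $k$-group $G$ is represented by a closed $k$-subscheme; separability makes it smooth, hence geometrically reduced, so after base change it coincides with the set-theoretic centralizer $C_G(H)$, which is therefore $k$-defined along with its identity component --- is sound, and it correctly isolates the failure of reduced structures to descend over nonperfect fields as the reason smoothness cannot be dropped. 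As a bonus you obtain the stronger conclusion that $C_G(H)$ itself (not only $C_G(H)^{\circ}$) is $k$-defined, which is in fact what the author invokes later (via ``the proof of [Prop.~7.4]'') in the proposition on linearly reductive subgroups in Section~5.
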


\begin{prop}\label{prop1}
Let $k$ be a nonperfect field of characteristic $3$. Let $a\in k\backslash k^3$. Let $G=GL_4$. Then there exists a $\overline k$-defined subgroup $H$ of $G$ such that $H$ is $G$-cr over $k$ but $C_G(H)$ is not $G$-cr over $k$.
\end{prop}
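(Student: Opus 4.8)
The goal is to construct, over a nonperfect field $k$ of characteristic $3$ with $a \in k \setminus k^3$, a $\overline{k}$-defined subgroup $H$ of $G = GL_4$ that is $G$-cr over $k$ yet whose full centralizer $C_G(H)$ fails to be $G$-cr over $k$. The plan is to exploit the gap, already emphasized throughout this section, between $C_G(H)$ and its maximal $k$-defined subgroup $\overline{C_G(H)(k)}$: by Corollary~\ref{cent} and Theorem~\ref{main}, the $k$-defined version behaves well, so any counterexample must come from the non-$k$-defined part of the centralizer. Since $H$ is to be $\overline{k}$-defined but not $k$-defined, we have genuine freedom to make $C_G(H)$ live partly outside the $k$-structure.

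**The construction.**

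First I would look for a one-parameter $\overline{k}$-subgroup $H$ built from the cube-root pathology, mirroring the characteristic-$2$ example of Remark~\ref{importantexample} but adapted to a degree-$3$ Frobenius twist inside $GL_4$. Concretely, one takes a cocharacter or a unipotent-plus-torus subgroup whose matrix entries involve $a^{1/3} \notin k$, so that $H$ is defined over $\overline{k}$ (indeed over $k(a^{1/3})$, a purely inseparable extension) but not over $k$. The idea is that conjugation by the Galois-like inseparable twist forces $C_G(H)$ to contain a unipotent element that is $k$-anisotropic in the sense of being non-$k$-plongeable, while $H$ itself remains anisotropic enough to be $G$-cr over $k$. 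I would verify $G$-complete reducibility of $H$ over $k$ by checking, via Theorem~\ref{stronglyreductive}, that $H$ is not contained in any proper $k$-defined $R$-parabolic of $G$, or equivalently by producing a $k$-anisotropic torus/unipotent witness as in~\cite[Ex.~3.8]{Uchiyama-Nonperfect-pre}.

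**Showing $C_G(H)$ is not $G$-cr over $k$.**

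For the failure, I would exhibit a nontrivial unipotent element $u \in C_G(H)$ together with a proper $k$-parabolic subgroup $P$ of $G$ with $u \in R_u(P)$, so that $\langle u\rangle$ is visibly not $G$-cr over $k$. The key point is that $u$ must lie in $C_G(H)$ but \emph{not} in $\overline{C_G(H)(k)}$: it should have entries in $k(a^{1/3})$ that are not Galois-stable over $k$, so that $u$ centralizes the $\overline{k}$-group $H$ yet is not a limit of $k$-points. Then $C_G(H)$ contains the non-$G$-cr subgroup $\langle u\rangle$, and I would argue that $C_G(H)$ itself is not $G$-cr over $k$ by showing it lies in a proper $k$-parabolic with no $k$-Levi containing it — essentially because its unipotent radical part cannot be conjugated away by a $k$-rational element. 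Here the explicit $4\times 4$ matrices must be written out to confirm both $[u,H]=1$ and $u \notin \overline{C_G(H)(k)}$.

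**The main obstacle.**

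The hard part will be arranging simultaneously that $H$ is $G$-cr over $k$ (which pushes $H$ toward being ``large'' or anisotropic) and that $C_G(H)$ contains a genuinely bad unipotent element living outside the $k$-structure (which requires $C_G(H)$ to be ``large'' in the complementary direction). These pull against each other, since a bigger $H$ shrinks $C_G(H)$. Reconciling them is exactly why $GL_4$ and characteristic $3$ are chosen: the dimension $4$ gives enough room for a rank-one inseparable twist with a two-dimensional centralizer, and the cube in $a \notin k^3$ supplies the order-$3$ inseparability matching the characteristic. I expect the delicate computation to be verifying that the resulting $C_G(H)$ admits \emph{no} $k$-defined $R$-Levi containing it, which amounts to checking that the relevant $R_u(P) \cap C_G(H)$ has no $k$-rational conjugating element — the same mechanism that drives the final step of the proof of Theorem~\ref{main}, run here in the direction of failure rather than success.
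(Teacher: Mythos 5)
There is a genuine gap: your proposal never actually constructs anything. The proposition is a pure existence statement whose entire content is an explicit example, and your text defers precisely the decisive steps (``the explicit $4\times 4$ matrices must be written out'', ``I expect the delicate computation to be verifying\dots''), so there is nothing that can be checked. For comparison, the paper's proof writes down $H=\langle h_1,h_2\rangle$ for two explicit block-diagonal ($3+1$) matrices with entries involving $a^{1/3}$ and $a^{2/3}$, computes $C_G(H)$ by hand, and then settles both halves with one elementary tool, namely the criterion that a subgroup of $GL(V)$ is $GL(V)$-cr over $k$ if and only if it acts $k$-semisimply on $V$~\cite[Ex.~3.2.2(a)]{Serre-building}: the natural module decomposes as $V=V_1\oplus V_2$ into $k$-irreducible $H$-summands of dimensions $3$ and $1$, so $H$ is $G$-cr over $k$; whereas $C_G(H)$, whose elements have last column $(x_1,\,a^{-1/3}x_1,\,a^{-2/3}x_1,\,t)^{T}$, stabilizes the $k$-defined subspace $V_1$ and admits no stable complement to it at all (not even a $\overline{k}$-defined one), hence is not $G$-cr over $k$. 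No plongeability, strong reductivity, or building-theoretic input enters anywhere.

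Beyond the missing construction, your outline takes a concrete wrong turn: you plan to arrange that $H$ be $G$-\emph{ir} over $k$ (via Theorem~\ref{stronglyreductive}, or a $k$-anisotropic witness as in Remark~\ref{importantexample}). The paper's $H$ is deliberately \emph{not} $G$-ir over $k$ --- it lies in the proper $k$-parabolic subgroup stabilizing $V_1$ --- and this reducibility is exactly what leaves room for the centralizer to misbehave. Irreducibility pulls the other way: Proposition~\ref{TCC-centralizer} and Corollary~\ref{cent} show (for $k$-defined $H$ with $k=k_s$) that $G$-irreducibility of $H$ forces its centralizer to be well behaved, and the paper itself flags the non-$k$-defined analogue as an open question (see the remark following Corollary~\ref{centralizer1}), so your route requires resolving a problem strictly harder than the one at hand. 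Moreover, the anisotropic-unipotent trick you want to import from $PGL_2$ is unavailable in $G=GL_4$: a unipotent element of $GL_4(k)$ is $1$ plus a nilpotent matrix over $k$, hence stabilizes a nonzero $k$-rational line and so lies in a proper $k$-parabolic subgroup. In short, the configuration you aim at ($H$ irreducible over $k$ with non-cr centralizer) is not known to exist, while the configuration that does work ($H$ reducible but $k$-semisimple, with the inseparability hidden inside one $k$-irreducible summand) is not the one your outline targets.
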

\begin{proof}
Let $h_1= \left[\begin{array}{cccc}
                        0 & 0 & a & 0 \\
                        1 & 0 & 0 & 0 \\
                        0 & 1 & 0 & 0 \\
                        0 & 0 & 0 & a^{1/3} \\
                         \end{array}  
                        \right]$,
     $h_2= \left[\begin{array}{cccc}
                        1 & a^{1/3} & 2a^{2/3} & 0 \\
                        0 & 1 & 0 & 0 \\
                        0 & 0 & 1 & 0 \\
                        0 & 0 & 0 & 1 \\
                         \end{array}  
                        \right]$. Set $H:=\langle h_1, h_2 \rangle$. \\

\noindent A simple matrix computation shows
$C_G(H)= \left\{\left[\begin{array}{cccc}
                        s & 0 & 0 & x_1 \\
                        0 & s & 0 & a^{-1/3}x_1 \\
                        0 & 0 & s & a^{-2/3}x_1 \\
                        0 & 0 & 0 & t \\
                         \end{array}  
                        \right]\in GL_4(\overline{k}) \middle| x_1, s, t \in \overline{k}\right\}$.
Note that a subgroup $H$ of $G=GL_n(V)$ is $G$-cr over $k$ if and only if $H$ acts $k$-semisimply on $V$~\cite[Ex.~3.2.2(a)]{Serre-building}. We find by a direct computation that $H$ acts semisimply on a $4$-dimensional $k$-vector space in the usual way with $k$-irreducible summands $V_1:=\left[\begin{array}{c}
                        * \\
                        * \\
                        * \\
                        0 \\
                         \end{array}  
                        \right]$ and $V_2:=\left[\begin{array}{c}
                        0 \\
                        0 \\
                        0 \\
                        * \\
                         \end{array}  
                        \right]$. Hence $H$ is $G$-cr over $k$. It is clear that $C_G(H)$ is not $G$-cr over $k$ since $V_1$ is a $k$-defined $3$-dimensional $C_G(H)$-stable subspace with no $C_G(H)$ stable complement. 

We show that $H$ is not $k$-defined. First, we see that $C_G(H)^{\circ}(k_s)$ is not dense in $C_G(H)^{\circ}$, so $C_G(H)^{\circ}$ is not $k$-defined by~\cite[AG.~13.3]{Borel-AG-book}. We conclude that by Lemma~\ref{centralizerkdefined}, $H$ cannot be $k$-defined since $H$ is separable in $G$.
\end{proof}

\begin{prop}\label{prop2}
Let $k$ be a nonperfect field of characteristic $p$. Let $a\in k\backslash k^p$. Let $G=GL_4$. Then there exists a $\overline k$-defined subgroup $H$ of $G$ such that $H$ is $G$-cr over $k$, but $\overline{C_G(H)(k_s)}$ is not $G$-cr over $k$.
\end{prop}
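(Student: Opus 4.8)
The plan is to mirror the construction of Proposition~\ref{prop1}: exhibit an explicit $\overline k$-defined (but not $k$-defined) subgroup $H=\langle h_1,h_2,\dots\rangle$ of $GL_4(\overline k)$ whose entries involve $a^{1/p}$, arrange that $H$ acts $k$-semisimply on $V=k^4$ so that $H$ is $G$-cr over $k$ by the dictionary $H$ is $G$-cr over $k$ $\iff$ $V$ is $k$-semisimple (\cite[Ex.~3.2.2(a)]{Serre-building}), and then analyze $\overline{C_G(H)(k_s)}$. The essential contrast with Proposition~\ref{prop1} is already visible in that proof: there the unipotent direction of $C_G(H)$ has coordinates $1,a^{-1/3},a^{-2/3}$, so imposing $k_s$-rationality forces the parameter $x_1$ to vanish and $\overline{C_G(H)(k_s)}$ collapses to the torus $\{\mathrm{diag}(s,s,s,t)\}$, which \emph{is} $G$-cr over $k$. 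The whole task is therefore to redesign the coupling so that this unipotent direction \emph{survives} over $k_s$.

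I would organize the argument through the commutant algebra. Write $B:=\mathrm{End}_{\overline k[H]}(\overline k^4)$, so $C_G(H)=B^{\times}$, and set $A:=B\cap M_4(k_s)$, the $k_s$-rational commutant; then $C_G(H)(k_s)=A^{\times}$ and $\overline{C_G(H)(k_s)}=\overline{A^{\times}}$. The target is to make $A$ a \emph{non-semisimple} $k_s$-algebra, with nonzero Jacobson radical $J$. If I achieve this, then $\overline{A^{\times}}$ has a nontrivial smooth connected unipotent radical $U=\overline{1+J}$, defined over $k_s$ and hence over $k$, acting nontrivially and faithfully on $V$. Setting $W:=V^{U}$ gives a proper nonzero $k$-defined $\overline{C_G(H)(k_s)}$-stable subspace: any $\overline{C_G(H)(k_s)}$-stable $k$-complement $W'$ would be $U$-stable with $W'^{U}\subseteq W\cap W'=0$, contradicting that the unipotent $U$ has nonzero fixed vectors in $W'\neq 0$. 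Thus $V$ is not $k$-semisimple for $\overline{C_G(H)(k_s)}$, and by the same dictionary $\overline{C_G(H)(k_s)}$ is not $G$-cr over $k$.

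The construction itself is the crux, and it forces $H$ to be genuinely non-$k$-defined. Indeed, if $H$ were $k$-defined and $V$ were $k$-semisimple, then $A=\mathrm{End}_{k[H]}(V)\otimes_k k_s$ would be semisimple (being the endomorphism algebra of a semisimple module), and $\overline{C_G(H)(k_s)}$ would be $G$-cr; so the inseparable element $a^{1/p}$ must be used precisely to twist $H$ off every $k$-form while preserving $k$-semisimplicity of $V$. Concretely I would choose $H$ so that over $\overline k$ two composition factors of $V$ are identified -- producing the radical direction $N\in B$ exactly as in Proposition~\ref{prop1} -- but so that, unlike there, the matching between these factors is realized by a $k_s$-rational (rather than $a^{-1/3}$-scaled) coupling, so that $N$ lies in $A$ and contributes to $J$. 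Non-$k$-definedness of $H$ would then be verified exactly as in Proposition~\ref{prop1}: every subgroup of $GL_n$ is separable, so Lemma~\ref{centralizerkdefined} applies, and one checks that $C_G(H)^{\circ}(k_s)$ is not Zariski dense in $C_G(H)^{\circ}$.

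The hard part will be step three: producing a single $H$ that is simultaneously $k$-semisimple (hence $G$-cr over $k$) and has a \emph{non-semisimple} $k_s$-commutant. These pull in opposite directions, since $k$-semisimplicity tends to force $A$ semisimple, while a non-semisimple $A$ records a coupling of composition factors that typically only exists over $\overline k$ and collapses over $k_s$ (the phenomenon behind Proposition~\ref{prop1}). Resolving this tension -- introducing the inseparability so that it both destroys $k$-definedness and creates a geometric self-extension of factors, yet aligning those factors so the resulting nilpotent endomorphism descends to $k_s$ instead of acquiring inseparable coordinates -- is the heart of the proof; once such $H$ is written down, everything else reduces to a direct matrix computation of $A$ and of $V^{U}$. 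It is worth noting that such an example is not in conflict with Theorem~\ref{main}, whose pseudo-reductive case genuinely requires $k=k_s$, so Proposition~\ref{prop2} is exactly the expected demonstration that $k$-separable-closedness cannot be dropped there.
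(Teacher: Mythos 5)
Your strategic analysis is correct and matches the paper's construction in spirit: you rightly diagnose that in Proposition~\ref{prop1} the nilpotent direction of the commutant is coupled through $1, a^{-1/3}, a^{-2/3}$, hence dies when $k_s$-rationality is imposed, and that the present proposition needs an $H$ whose commutant contains a nilpotent direction with $k_s$-rational coupling that therefore survives in $\overline{C_G(H)(k_s)}$. But there is a genuine gap: you never construct $H$. The proposition is an existence statement, and your proposal explicitly defers ``the hard part'' --- producing an $H$ that acts $k$-semisimply on $V$ while having a non-semisimple $k_s$-rational commutant --- calling it ``the heart of the proof'' and leaving it unresolved. Everything else in your outline (the algebras $B$ and $A=B\cap M_4(k_s)$, the fixed-space argument with $W=V^U$) is routine once such an $H$ is in hand, so what is missing is not a detail but the entire content. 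For comparison, the paper takes $H=\langle h_1,h_2\rangle$ with $h_1=I+E_{12}+E_{13}$ (all entries in $k$; only one generator carries the inseparable element) and $h_2=I+aE_{21}+a^{1/p}E_{31}$, where $E_{ij}$ are the matrix units. A direct computation gives a five-parameter commutant in which the direction $x_2(E_{24}-E_{34})$ has $k$-rational coefficients and survives over $k_s$, while the other nilpotent direction $y_2(E_{42}+a^{(p-1)/p}E_{43})$ and the extra torus direction (coupled through $s-t$ and $a^{(p-1)/p}(s-t)$) are killed; thus $\overline{C_G(H)(k_s)}$ consists of the matrices $\mathrm{diag}(s,s,s,w)+x_2(E_{24}-E_{34})$, which stabilize the $k$-defined subspace $\langle e_1,e_2,e_3\rangle$ with no stable complement --- exactly your ``surviving unipotent coupling,'' realized concretely.

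Two secondary points. First, your assertion that $U=\overline{1+J}$ is ``defined over $k_s$ and hence over $k$'' is not a valid implication: over a nonperfect field, $k_s$-definedness does not imply $k$-definedness (this distinction is the theme of the whole paper, and is precisely why pseudo-reductivity appears in Theorem~\ref{main}). Consequently the $k$-definedness of $W=V^U$, which your complementation argument needs in order to produce a $k$-defined parabolic with no $k$-defined Levi containing $\overline{C_G(H)(k_s)}$, requires a separate justification (Galois-stability of $U$, or, as in the paper, an explicit matrix description from which the $k$-defined stable subspace and the absence of any stable complement are read off directly). Second, your closing remark misidentifies why the example is consistent with Theorem~\ref{main}: the point, stated in the paper just before Proposition~\ref{prop1}, is that $H$ is not $k$-defined (and, relatedly, that $\overline{C_G(H)(k_s)}$ is not pseudo-reductive), not that separable closedness of $k$ fails; indeed the construction works verbatim over a separably closed nonperfect field, where it shows that the $k$-definedness hypothesis on $H$ in Theorem~\ref{main} and Open Problem~\ref{centralizerquestion} cannot be dropped.
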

\begin{proof}
Let $h_1= \left[\begin{array}{cccc}
                        1 & 1 & 1 & 0 \\
                        0 & 1 & 0 & 0 \\
                        0 & 0 & 1 & 0 \\
                        0 & 0 & 0 & 1 \\
                         \end{array}  
                        \right]$,
     $h_2= \left[\begin{array}{cccc}
                        1 & 0 & 0 & 0 \\
                        a & 1 & 0 & 0 \\
                        a^{1/p} & 0 & 1 & 0 \\
                        0 & 0 & 0 & 1 \\
                         \end{array}  
                        \right]$. Define
     $H:=\langle h_1, h_2 \rangle$. Then

\noindent$C_G(H)= \left\{\left[\begin{array}{cccc}
                        s & 0 & 0 & 0 \\
                        0 & t & a^{\frac{p-1}{p}}(s-t) & x_2 \\
                        0 & s-t & (1-a^{\frac{p-1}{p}})s+a^{\frac{p-1}{p}}t & -x_2\\
                        0 & y_2 & a^{\frac{p-1}{p}}y_2 & w \\
                         \end{array}  
                        \right]\in GL_4(\overline{k}) \middle| x_2, y_2, s, t, w \in \overline{k}\right\}$. \\
So, $\overline{C_G(H)(k_s)}= \left\{\left[\begin{array}{cccc}
                        s & 0 & 0 & 0 \\
                        0 & s & 0 & x_2 \\
                        0 & 0 & s & -x_2\\
                        0 & 0 & 0 & w \\
                         \end{array}  
                        \right]\in GL_4(\overline k) \middle| x_2, s, w \in \overline{k}\right\}$.
A similar argument to that in the proof of Proposition~\ref{prop1} shows that $H$ is $G$-cr over $k$ and $H$ is not $k$-defined. It is clear that $\overline{C_G(H)(k_s)}$ is not $G$-cr over $k$.
\end{proof}

\section{On the structure of the set of $R$-parabolic subgroups}

\begin{proof}[Proof of Theorem~\ref{TCCnotsubset}]
Let $\tilde G=SL_3$. Let $G=\tilde G \rtimes \langle \sigma \rangle$ where $\sigma$ is the nontrivial graph automorphism of $\tilde G$. Fix a $k$-split maximal torus $T$, and a $k$-Borel subgroup $B$ of $G$ containing $T$. Let $\alpha, \beta$ be the simple roots of $G$ corresponding to $T$ and $B$. Let $n_\alpha, n_\beta$ be the canonical reflections corresponding to $\alpha$ and $\beta$ respectively. Let $f$ be the automorphism of $G$ such that $f(A)=(A^{T})^{-1}$. Then we obtain 
\begin{equation}
\sigma(P_i) = (n_\alpha n_\beta n_\alpha) f(P_i) (n_\alpha n_\beta n_\alpha)^{-1} \text{ for }i\in \{\alpha, \beta\}. \label{contrans}
\end{equation}  
In particular, we have
$
\sigma(P_\alpha)=P_\beta,\; \sigma(P_\beta)=P_\alpha.
$
Let $W_k$ be the Weyl group of $G^{\circ}$. Then $W_k\cong S_3$. We list all canonical representatives of $W_k$:
$
1, n_\alpha, n_\beta, n_\alpha n_\beta, n_\beta n_\alpha, n_\alpha n_\beta n_\alpha.
$
Taking all $W_k$-conjugates of $P_\alpha$ and $P_\beta$, we obtain all proper maximal $k$-parabolic subgroups of $G^{\circ}$ containing $T$: $P_\alpha$, $P_\beta$, $P_{-\alpha}$, $P_{-\beta}$, $n_\beta\cdot P_{\alpha}$, $n_\alpha\cdot P_\beta$. Using (\ref{contrans}), we find that none of these proper maximal $k$-parabolic subgroups of $G^{\circ}$ is normalized by $\sigma$. So, by~\cite[Prop.~6.1]{Bate-geometric-Inventione} they are $k$-defined proper maximal $R$-parabolic subgroups of $G$ containing $T$.

Now we look at $k$-Borel subgroups of $G^{\circ}$. By taking all $W_k$-conjugates of $B$, we obtain all $k$-Borel subgroups of $G^{\circ}$ containing $T$: $B$, $n_\alpha\cdot B$, $n_\beta B$, $n_\alpha n_\beta\cdot B$, $n_\beta n_\alpha\cdot B$, $n_\alpha n_\beta n_\alpha \cdot B$. 

\begin{lem}\label{A2}
$C_{Y_k(T)}(\sigma)=a(\alpha^{\vee}+\beta^{\vee}), \textup{ where } a\in \mathbb{Z}$.
\end{lem}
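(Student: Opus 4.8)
The plan is to compute the fixed-point sublattice of $\sigma$ on $Y_k(T)$ directly. Since $Y_k(T)$ is abelian, $C_{Y_k(T)}(\sigma)$ is exactly the set of $k$-cocharacters fixed by the conjugation action of $\sigma$, so the whole problem reduces to linear algebra over $\mathbb{Z}$ once the lattice and the action have been pinned down.

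First I would identify $Y_k(T)$ explicitly. As $\tilde G = SL_3$ is $k$-split, semisimple and simply connected, the cocharacter lattice of the maximal $k$-split torus $T$ coincides with the coroot lattice, so $Y_k(T) = \mathbb{Z}\alpha^{\vee} \oplus \mathbb{Z}\beta^{\vee}$ and a general element is $\lambda = m\alpha^{\vee} + n\beta^{\vee}$ with $m, n \in \mathbb{Z}$.

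Next I would determine the action of $\sigma$ on $Y_k(T)$, given by $(\sigma\cdot\lambda)(t) = \sigma(\lambda(t))$. Because $\sigma$ is the non-trivial graph automorphism of the $A_2$ diagram, it stabilizes $T$ and interchanges the simple roots, $\sigma(\alpha)=\beta$ and $\sigma(\beta)=\alpha$; consequently it interchanges the corresponding coroots, $\sigma(\alpha^{\vee})=\beta^{\vee}$ and $\sigma(\beta^{\vee})=\alpha^{\vee}$. Imposing $\sigma\cdot\lambda=\lambda$ on $\lambda = m\alpha^{\vee}+n\beta^{\vee}$ then forces $n\alpha^{\vee}+m\beta^{\vee}=m\alpha^{\vee}+n\beta^{\vee}$, i.e. $m=n$, so the fixed sublattice is exactly $\{a(\alpha^{\vee}+\beta^{\vee}) : a\in\mathbb{Z}\}$, which is the assertion.

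The only step requiring genuine care — though it is not a serious obstacle — is verifying that $\sigma$ really induces the transposition $\alpha^{\vee}\leftrightarrow\beta^{\vee}$ on the coroot lattice, rather than some other lattice automorphism. This can be read off the explicit realization of $\sigma$ in equation~(\ref{contrans}) as the map $A\mapsto (A^{T})^{-1}$ conjugated by $n_\alpha n_\beta n_\alpha$, or argued more conceptually from the general fact that a diagram automorphism acts on coroots by the same permutation it effects on the simple roots.
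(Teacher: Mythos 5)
Your proof is correct and takes essentially the same route as the paper: the paper also writes a general $\lambda = x\alpha^{\vee}+y\beta^{\vee}$ and uses equation~(\ref{contrans}) to compute $\sigma(\lambda)=n_\alpha n_\beta n_\alpha \cdot (-x\alpha^{\vee}-y\beta^{\vee})=y\alpha^{\vee}+x\beta^{\vee}$, i.e.\ that $\sigma$ swaps the two coroots, forcing $x=y$. The verification you flag as the one delicate point (that $\sigma$ really induces the transposition $\alpha^{\vee}\leftrightarrow\beta^{\vee}$) is precisely the computation the paper carries out, so the two arguments coincide in substance.
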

\begin{proof}
Let $\lambda = x\alpha^{\vee}+y\beta^{\vee}$. Using (\ref{contrans}), we obtain 
$
\sigma(\lambda)=n_\alpha n_\beta n_\alpha \cdot (-x\alpha^{\vee}-y\beta^{\vee}) 
                        =y\alpha^{\vee}+x\beta^{\vee}$. 
Then, for $\lambda\in C_{Y_k(T)}(\sigma)$ we must have $x=y$.
\end{proof}

\begin{lem}\label{H1-lemma}
$H:=\langle \sigma, B\rangle$ is a $k$-defined $R$-parabolic subgroup of $G$.
\end{lem}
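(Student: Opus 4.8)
The plan is to realize $H$ directly as $P_\lambda$ for a well-chosen $k$-cocharacter, so that $H$ is automatically an $R$-parabolic subgroup of $G$ in the sense of Definition~\ref{non-connected-G-cr}, and $k$-defined because the cocharacter is (recall that a $k$-defined $\lambda$ yields $k$-defined $P_\lambda$, $L_\lambda$ by~\cite[Lem.~2.5]{Bate-uniform-TransAMS}). Guided by Lemma~\ref{A2}, the candidate is the $\sigma$-fixed regular dominant cocharacter $\lambda := \alpha^{\vee}+\beta^{\vee}\in Y_k(T)$, which is $k$-defined since $T$ is $k$-split.

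First I would record that $\lambda$ is regular dominant: $\langle\alpha,\lambda\rangle=\langle\beta,\lambda\rangle=1>0$ (and $\langle\alpha+\beta,\lambda\rangle=2>0$), so the limit $\lim_{a\to 0}\lambda(a)\,u_\gamma(x)\,\lambda(a)^{-1}=u_\gamma(a^{\langle\gamma,\lambda\rangle}x)$ exists for $\gamma\in\{\alpha,\beta,\alpha+\beta\}$ and fails for the negative roots. Hence $P_\lambda\cap G^{\circ}=P_\lambda(G^{\circ})=B$, the Borel subgroup attached to the positive system $\{\alpha,\beta\}$.

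The key point is that $\sigma\in P_\lambda$. Since $\sigma$ normalizes $T$ and acts on $Y_k(T)$ by conjugation, for $a\in\overline k^{*}$ one has $\lambda(a)\,\sigma\,\lambda(a)^{-1}=\lambda(a)\,\sigma(\lambda)(a)^{-1}\sigma=(\lambda-\sigma(\lambda))(a)\,\sigma$, using that $\lambda(a)$ and $\sigma(\lambda)(a)^{-1}$ both lie in the abelian group $T$. By Lemma~\ref{A2} we have $\sigma(\lambda)=\lambda$, so this expression equals $\sigma$ for all $a$; thus $\lim_{a\to 0}\lambda(a)\,\sigma\,\lambda(a)^{-1}=\sigma$ exists (indeed $\sigma\in L_\lambda$), and therefore $\sigma\in P_\lambda$ by Definition~\ref{non-connected-G-cr}.

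Finally I would assemble the two inclusions. Because $B$ and $\sigma$ both lie in the group $P_\lambda$, we get $H=\langle\sigma,B\rangle\subseteq P_\lambda$. For the reverse inclusion I would use the coset partition $G=G^{\circ}\sqcup G^{\circ}\sigma$: one has $P_\lambda\cap G^{\circ}=B$, while any $g\sigma\in P_\lambda\cap G^{\circ}\sigma$ satisfies $g=(g\sigma)\sigma^{-1}\in P_\lambda\cap G^{\circ}=B$, so $P_\lambda=B\sqcup B\sigma\subseteq H$. Hence $H=P_\lambda$ is a $k$-defined $R$-parabolic subgroup of $G$. The computation is elementary; I expect the only delicate point to be the disconnected bookkeeping, namely making the identity $\lambda(a)\,\sigma\,\lambda(a)^{-1}=(\lambda-\sigma(\lambda))(a)\,\sigma$ precise (correctly transporting the $\sigma$-action on cocharacters through the conjugation) and verifying that $P_\lambda$ meets the outer component $G^{\circ}\sigma$ in no more than $B\sigma$, rather than any genuine difficulty.
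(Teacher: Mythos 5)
Your proposal is correct and follows essentially the same route as the paper: the paper's proof also takes $\lambda=\alpha^{\vee}+\beta^{\vee}$, invokes Lemma~\ref{A2} to get $\sigma\in L_\lambda$, and then asserts that "an easy calculation shows $P_\lambda=H$" — which is exactly the verification ($P_\lambda\cap G^{\circ}=B$ by regular dominance, plus the coset argument on $G^{\circ}\sigma$) that you carry out explicitly.
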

\begin{proof}
Let $\lambda = \alpha^{\vee}+\beta^{\vee}$. By Lemma~\ref{A2}, we have $\sigma \in L_\lambda$. An easy calculation shows that $P_\lambda= H$. 
\end{proof}

It is clear that $H$ is not $G$-cr over $k$. Note that $\Lambda(G)^{H}=\{ H \}$, and $H$ is not a simplex in $\Delta(G)=\Delta(G^0)$. This gives the first part of the theorem. Consider the set of $k$-defined $R$-parabolic subgroups of $G$ containing $T$. We have
\begin{equation*}
B<B, B<P_\alpha, B<P_\beta, B<P_\lambda, B<G.
\end{equation*}
Thus the cardinality of the set of $R$-parabolic subgroups of $G$ containing $B$ is $5$, which is not a power of $2$. So $\Lambda(G)$ ordered by reverse inclusion is not a simplicial complex (in the sense of~\cite[Thm.~5.2]{Tits-book}) since it cannot be isomorphic to the partially ordered set of subsets of some finite set; see Figure 1 where vertices (edges) correspond to $k$-defined maximal (minimal) $R$-parabolic subgroups of $G$ containing $T$. 
\end{proof}

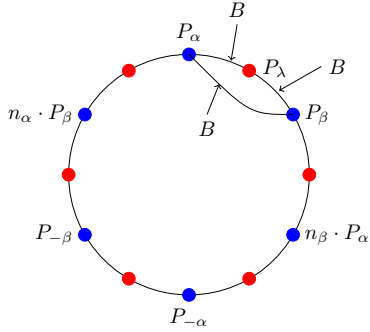
\begin{figure}[!h]
\begin{center}
\scalebox{0.8}{
\begin{tikzpicture}
\draw (0,0) circle (2cm);
\filldraw [blue] (0,2) circle (3pt);
\draw [above] (0,2.1) node {$P_\alpha$};
\filldraw [blue] (0,-2) circle (3pt);
\draw [below] (0,-2.1) node {$P_{-\alpha}$};
\filldraw [blue] (1.73, 1) circle (3pt);
\draw [right] (1.80, 1) node {$P_{\beta}$};
\filldraw [blue] (1.73, -1) circle (3pt);
\draw [right] (1.80, -1) node {$n_\beta\cdot P_{\alpha}$};
\filldraw [blue] (-1.73, 1) circle (3pt);
\draw [left] (-1.80, -1) node {$P_{-\beta}$};
\filldraw [blue] (-1.73, -1) circle (3pt);
\draw [left] (-1.80, 1) node {$n_\alpha\cdot P_{\beta}$};
\filldraw [red] (1, 1.73) circle (3pt);
\draw [right] (1.1, 1.77) node {$P_\lambda$};
\filldraw [red] (1, -1.73) circle (3pt);
\filldraw [red] (2, 0) circle (3pt);
\filldraw [red] (-2, 0) circle (3pt);
\filldraw [red] (-1, 1.73) circle (3pt);
\filldraw [red] (-1, -1.73) circle (3pt);
\draw (0,2) .. controls (1,1) .. (1.73, 1);
\draw [->] (0.8, 2.5)--(0.7, 1.9); 
\draw [above] (0.8, 2.5) node {$B$};
\draw [->] (2.2, 1.8)--(1.5, 1.4); 
\draw [right] (2.2, 1.8) node {$B$};
\draw [->] (0.3, 1.0)--(0.5, 1.5); 
\draw [below] (0.3, 1.0) node {$B$};
\end{tikzpicture}
}
\caption{The set of $R$-parabolic subgroups of $G=SL_3$ containing $T$}
\end{center}
\end{figure}

\begin{oprob}
Let $k$ be a field. Let $G$ be non-connected. Suppose that a (possibly non-$k$-defined) subgroup $H$ of $G$ is not $G$-cr over $k$. If a $k$-subgroup $N$ of $G$ normalizes $H$, does there exist a $k$-defined proper $R$-parabolic subgroup of $G$ containing $H$ and $N$?
\end{oprob}

\section{The number of overgroups of $G$-ir subgroups}
Recall that Proposition~\ref{overgroups} depended on the following~\cite[Lem.~2.1]{Liebeck-Testerman-irreducible-QJM}:
\begin{lem}\label{finiteness}
Let $k$, $G$, $H$ be as in the hypotheses of Proposition~\ref{overgroups}. Then $C_G(H)$ is finite.
\end{lem}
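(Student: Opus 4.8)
The plan is to obtain a contradiction with the $G$-irreducibility of $H$: I will show that if $C_G(H)$ were infinite, then $H$ would be forced into a proper parabolic subgroup of $G$. The essential input is the reductivity of the centralizer. Since $H$ is $G$-ir it is in particular $G$-cr, so Proposition~\ref{algclocentralizer} gives that $C_G(H)$ is reductive. I single this out because the next step produces a nontrivial torus inside the centralizer, and a positive-dimensional \emph{unipotent} group contains no nontrivial torus; reductivity is exactly what rules this possibility out.

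Assume for contradiction that $C_G(H)$ is infinite, so that $C_G(H)^{\circ}$ is a nontrivial connected reductive group. Its maximal torus $S$ is then nontrivial, since a reductive group with trivial maximal torus is unipotent and hence trivial. I would pick any nontrivial cocharacter $\lambda$ of $G$ with image in $S$. Because $S < C_G(H)$, the subgroup $H$ centralizes $S$, and therefore $H < C_G(S) < C_G(\lambda(\overline{k}^*)) = L_\lambda < P_\lambda$. It then remains only to check that $P_\lambda$ is proper: here I use that $G$ is semisimple, so $Z(G)$ is finite, whence the one-dimensional subtorus $\lambda(\overline{k}^*)$ cannot lie in $Z(G)$; thus $\lambda$ is noncentral and $P_\lambda \neq G$. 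This exhibits $H$ inside a proper parabolic subgroup, contradicting $G$-irreducibility, and so $C_G(H)$ must be finite.

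The argument is short once the earlier machinery is available, and there is no single hard computational step; rather, the two delicate points coincide precisely with the two places where the hypotheses enter. The reductivity of $C_G(H)$ (via Proposition~\ref{algclocentralizer}) is what guarantees the torus $S$, and the semisimplicity of $G$ is what guarantees the resulting $P_\lambda$ is proper. I expect the reductivity statement to be the genuine crux rather than a convenience: for an arbitrary infinite centralizer with a nontrivial unipotent identity component the torus extraction would simply fail, and the passage $H < C_G(S) < L_\lambda < P_\lambda$ would have no analogue.
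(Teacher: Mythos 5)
Your proof is correct. One thing to be aware of: the paper itself does not prove Lemma~\ref{finiteness} at all --- it quotes it from Liebeck and Testerman (Lem.~2.1 of the cited QJM paper) --- so the comparison must be made with that original argument. The classical proof proceeds by a case split on $D := C_G(H)^{\circ}$: if $D$ contains a nontrivial torus $S$, then (exactly as in your argument) $H \le C_G(S)$, which lies in a proper parabolic subgroup since $Z(G)$ is finite; if $D$ contains no nontrivial torus, then $D$ is a positive-dimensional unipotent group, and the Borel--Tits theorem yields a proper parabolic subgroup $P$ with $N_G(D)\le P$, which contains $H$ because $H$ normalizes its own centralizer --- again contradicting $G$-irreducibility. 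You avoid this second case entirely by invoking Proposition~\ref{algclocentralizer} (reductivity of $C_G(H)$, a theorem of Bate--Martin--R\"ohrle) up front, so that reductivity of $C_G(H)^{\circ}$ forces its maximal torus to be nontrivial. Both routes are sound, and yours is not circular, since the proof of Proposition~\ref{algclocentralizer} does not rest on this lemma. What your version buys is brevity, given the heavier machinery; what the classical version buys is independence from it (Borel--Tits suffices), which also matters historically, since the reductivity theorem postdates the Liebeck--Testerman paper. Your remaining steps --- choosing a nontrivial cocharacter $\lambda$ with image in $S$, the chain $H \le C_G(S) \le C_G(\lambda(\overline{k}^{*})) = L_\lambda \le P_\lambda$, and properness of $P_\lambda$ because the one-dimensional torus $\lambda(\overline{k}^{*})$ cannot lie in the finite center of the semisimple group $G$ --- are all correct.
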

However if $k$ is nonperfect, we have 
\begin{prop}
Let $k$ be a nonperfect field of characteristic $2$. Let $G=PGL_2$. Then there exists a connected $k$-subgroup $H$ of $G$ such that $H$ is $G$-ir over $k$ but $C_G(H)$ is infinite.
\end{prop}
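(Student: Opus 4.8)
The plan is to recycle the group $H$ already introduced in Remark~\ref{importantexample}: fix $a\in k\setminus k^2$, let $u:=\overline{\left[\begin{array}{cc} 0 & a \\ 1 & 0 \end{array}\right]}\in PGL_2$ with matrix lift $\widetilde u:=\left[\begin{array}{cc} 0 & a \\ 1 & 0 \end{array}\right]\in GL_2$, and set
\[
H := \left\{\overline{\left[\begin{array}{cc} x & ay \\ y & x \end{array}\right]}\in PGL_2(\overline k)\;\middle|\; x,y\in\overline k\right\},
\]
the image in $PGL_2$ of the unit group of the $k$-algebra $k[\widetilde u]$. I claim this single example already witnesses the proposition, so the work is entirely in checking its three required features: connectedness, $G$-irreducibility over $k$, and infinitude of the centralizer.

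First I would record that $H$ is a connected one-dimensional $k$-subgroup. Over $\overline k$ write $a=s^2$; since $\mathrm{char}\,k=2$ the algebra $k[\widetilde u]\otimes_k\overline k\cong\overline k[t]/(t^2-a)=\overline k[t]/((t-s)^2)$ is the ring of dual numbers, so passing to units and quotienting by scalars identifies $H_{\overline k}$ with $\mathbb{G}_a$. Thus $H$ is a $k$-defined connected one-dimensional unipotent group with $\dim H=1$. Next I would verify that $H$ is $G$-ir over $k$, which is the one genuinely nonperfect-flavoured step. Every proper $k$-parabolic of $PGL_2$ is the stabilizer $B_\ell$ of a $k$-rational line $\ell\in\mathbb{P}^1(k)$. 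If $H\subseteq B_\ell$, then in particular $u$ fixes $\ell$; but $\widetilde u$ has characteristic polynomial $\lambda^2-a=(\lambda-s)^2$, so $u$ is a nontrivial unipotent element of $PGL_2$ with a unique invariant line $\langle(s,1)\rangle$, which is not $k$-rational because $s=\sqrt a\notin k$. Hence $u$, and a fortiori $H$, lies in no proper $k$-parabolic, so $H$ is $G$-ir over $k$. This is exactly the $k$-anisotropy of $u$ recorded in the remark.

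Finally I would compute the centralizer. Since $\widetilde u$ is non-scalar, $C_{GL_2}(\widetilde u)=k[\widetilde u]^{\times}$ is two-dimensional. If $\overline g\in PGL_2$ centralizes $u$, then $g\widetilde u g^{-1}=c\,\widetilde u$ for some scalar $c\in\overline k^{*}$; taking determinants gives $c^2=1$, hence $c=1$ in characteristic $2$, so $g\widetilde u g^{-1}=\widetilde u$ and $\overline g\in H$. Therefore $C_G(u)=H$, and since $H$ is abelian we get $C_G(H)=H$, a one-dimensional — hence infinite — group.

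The main point worth flagging is conceptual rather than computational. Over $\overline k$, Lemma~\ref{finiteness} forces $C_G(H)$ to be finite whenever $H$ is connected and $G$-ir, so nonperfectness must do real work here; it does so precisely because in characteristic $2$ the separable quadratic norm-one torus $k[\widetilde u]^{\times}/\text{scalars}$ (the shape one would expect in odd characteristic) degenerates into the unipotent group $\mathbb{G}_a$, whose generator $u$ is $k$-anisotropic and whose centralizer is positive-dimensional. The only step requiring genuine care is the $G$-irreducibility, which rests entirely on the non-$k$-rationality of the invariant line of $u$; the centralizer computation is then routine.
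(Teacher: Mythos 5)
Your proposal is correct and is essentially the paper's own proof: the paper uses exactly this $H$ from Remark~\ref{importantexample}, citing that remark for connectedness and $G$-irreducibility over $k$ and asserting $C_G(H)=H$. You have simply filled in the details the paper delegates to the remark and its references (the dual-numbers identification $H_{\overline k}\cong\mathbb{G}_a$, the non-$k$-rational invariant line of $u$, and the determinant argument showing $C_G(u)=H$), all of which check out.
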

\begin{proof}
Let $a\in k\backslash k^2$. Let $H:=\left\{\overline{\left[\begin{array}{cc}
                                                      x & ay \\
                                                      y & x \\
                                                    \end{array}  
                                             \right]} \in PGL_2(\overline{k}) \mid x, y \in \overline{k}\right\}$. Then $H$ is connected and $G$-ir over $k$; see Remark~\ref{importantexample}. We have $H=C_G(H)$, and $C_G(H)$ is infinite.
\end{proof}

\begin{proof}[Proof of Theorem~\ref{infinitelymanyconjugacy}]
Let $a\in k\backslash k^2$. Define \\$H:= \left\{\overline{\left[\begin{array}{cccc}
                        x &  ay & az & aw\\
                        w &  x  & ay & az\\
                        z  &  w  &  x   & ay   \\
                        y  &  z  &  w   & x   \\
                         \end{array}  
                        \right]}\in PGL_4(\overline{k}) \middle | x,y,z,w\in \overline{k} \right\}$. Note that $H$ is the centralizer of a $k$-anisotropic unipotent element $\overline{\left[\begin{array}{cccc}
                        0 &  0 & 0 & a\\
                        1 &  0  & 0 & 0\\
                        0  &  1  &  0   & 0   \\
                        0  &  0  &  1   & 0   \\
                         \end{array}  
                        \right]}$ of $PGL_4(k)$. Then $H$ is connected and $G$-ir over $k$. Note that $H$ is $3$-dimensional. Since $h^4=1$ for any $h\in H$, $H$ is a unipotent group. So, for an appropriate element $g\in G(\overline{k})$, $gHg^{-1}$ is a (possibly non-$k$-defined) subgroup of the $6$-dimensional group $U$ of upper unitriangular matrices of $G$. A computation by Magma (using the standard function \texttt{UpperTriangularMatrix($\cdot$)}) shows that there exist some $g\in G(\overline{k})$ such that\\
$g H g^{-1}= \left\{\overline{\left[\begin{array}{cccc}
                        X &  Y & Z & W\\
                        0 &  X  & Y & Z\\
                        0  &  0  &  X   & Y   \\
                        0  &  0  &  0   & X   \\
                         \end{array}  
                        \right]}\in PGL_4(\overline k) \middle | X,Y,Z,W\in \overline k\right\}$. 
Now for each $b\in \overline k$, define\\
  $H_b:=  \left \langle g H g^{-1},  
                        \overline{\left[\begin{array}{cccc}
                        1 &  0 & 0 & 0\\
                        0 &  1  & 0 & b\\
                        0  &  0  &  1   & 0   \\
                        0  &  0  &  0   & 1   \\
                         \end{array}  
                        \right] }\right \rangle$. Then a quick computation shows that the groups $H_b$ are distinct. So the groups $g^{-1} H_b g$ are infinitely many overgroups of $H$. 
\end{proof}

\begin{oprob}\label{openovergroup}
Let $k$ be a field. Let $G$ be semisimple algebraic group. Suppose that a $k$-subgroup $H$ of $G$ is connected and $G$-ir over $k$. Then does $H$ have only finitely many $k$-defined overgroups?
\end{oprob}

\section{The number of conjugacy classes of $k$-anisotropic unipotent elements}
Let $k=\overline{k}$. Let $G/k$ be conncected reductive. The following are known.
\begin{enumerate}
\itemsep0em 
\item {There are only finitely many conjugacy classes of unipotent elements in $G$~\cite[Thm.~13]{Lusztig-unipotent-invent}. }
\item {There is only a finite number $c_N$ of $G$-conjugacy classes of $G$-cr subgroups of fixed order $N$~\cite[Cor.~3.8]{Bate-geometric-Inventione}.}
\item {Moreover, if $G$ is simple, there is a uniform bound on $c_N$ that depends only on $N$ and the type of $G$ but not on $k$~\cite[Prop.~2.1]{Liebeck-Martin-Shalev}}.
\end{enumerate}  

Now let $k$ be nonperfect, and let $G/k$ be connected reductive. In this section, we show that the natural analogue of the above results 1, 2, 3 fail at the same time over a nonperfect $k$.  

\begin{prop}
Let $\mathbb{F}_2$ be the finite field with $2$ elements. Let $k:=\mathbb{F}_2(x)$ be a function field over $\mathbb{F}_2$. Let $G=PGL_2$. Then there exist infinitely many $G(k)$-conjugacy classes of $k$-anisotropic unipotent elements in $G$.
\end{prop}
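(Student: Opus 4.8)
The plan is to exhibit an explicit infinite family of $k$-anisotropic unipotent elements indexed by the square classes of $k^{*}$, and to separate their $G(k)$-conjugacy classes by a determinant invariant. For each $a\in k^{*}\setminus (k^{*})^{2}$, set
\[
u_a := \overline{\begin{bmatrix} 0 & a \\ 1 & 0 \end{bmatrix}} \in PGL_2(k),\qquad M_c := \begin{bmatrix} 0 & c \\ 1 & 0 \end{bmatrix}.
\]
By exactly the reasoning of Remark~\ref{importantexample}, each $u_a$ is a nontrivial $k$-anisotropic unipotent element: it is unipotent since $u_a^{2}=1$ and $\mathrm{char}\,k=2$, and it is $k$-anisotropic because the repeated eigenvalue $\sqrt a\notin k$, so $u_a$ stabilizes no $k$-point of $\mathbb{P}^1$ and hence lies in no proper $k$-parabolic (Borel) subgroup of $PGL_2$.

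Next I would construct the conjugacy invariant. The conceptual point is that over $\overline k$ all these elements fall into the single nontrivial unipotent class of $PGL_2$, so the separation is a genuinely rational phenomenon and must be detected over $k$. By Hilbert's Theorem~90 we have $H^{1}(k,\mathbb{G}_m)=0$, so the map $GL_2(k)\to PGL_2(k)$ is surjective and $PGL_2(k)=GL_2(k)/k^{*}$. Suppose $u_a$ and $u_b$ are $PGL_2(k)$-conjugate; lifting a conjugating element to $g\in GL_2(k)$ gives $gM_ag^{-1}=\lambda M_b$, where $\lambda\in k^{*}$ (comparing the nonzero $(2,1)$-entries forces $\lambda$ into $k^{*}$). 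Taking determinants, and using $\det M_c=-c=c$ in characteristic $2$, yields $a=\lambda^{2}b$, so $a$ and $b$ represent the same class in $k^{*}/(k^{*})^{2}$. Thus $a\mapsto [a]\in k^{*}/(k^{*})^{2}$ is an invariant of the $G(k)$-conjugacy class of $u_a$.

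Finally I would verify that $k^{*}/(k^{*})^{2}$ is infinite for $k=\mathbb{F}_2(x)$. Writing each nonzero $f$ as $\prod_P P^{v_P(f)}$ over monic irreducibles $P\in\mathbb{F}_2[x]$, one sees $f\in(k^{*})^{2}$ iff every $v_P(f)$ is even (the leading coefficient lies in $\mathbb{F}_2^{*}=\{1\}$ and causes no obstruction), whence $k^{*}/(k^{*})^{2}\cong\bigoplus_P \mathbb{Z}/2\mathbb{Z}$. Since there are infinitely many monic irreducibles, the classes $[P]$ are distinct and nontrivial, so the elements $u_P$ are pairwise non-$G(k)$-conjugate $k$-anisotropic unipotent elements, giving infinitely many conjugacy classes.

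I expect the main obstacle to be conceptual rather than computational: one must detect conjugacy over $k$ rather than over $\overline k$, where the whole family collapses to a single class. This forces the production of a genuinely arithmetic invariant (the determinant square-class) together with careful bookkeeping of the passage between $GL_2(k)$ and $PGL_2(k)$ via Hilbert~90; once that invariant is in hand, the infinitude of $k^{*}/(k^{*})^{2}$ finishes the argument with no further effort.
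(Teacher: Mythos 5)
Your proposal is correct and takes essentially the same approach as the paper: the paper also uses anti-diagonal representatives $\overline{\left[\begin{smallmatrix}0 & f\\ 1 & 0\end{smallmatrix}\right]}$ (with $f=p_n(x)=x^{2\cdot 3^n}+x^{3^n}+1$ irreducible over $\mathbb{F}_2$), lifts a hypothetical $PGL_2(k)$-conjugation to $GL_2(k)$, and takes determinants, so its separating invariant is precisely your determinant square-class in $k^{*}/(k^{*})^{2}$. The only difference is presentational: the paper verifies via unique factorization that distinct irreducibles $p_i,p_j$ are not related by a square factor of $k^{*}$, whereas you package the same computation as the statement that $k^{*}/(k^{*})^{2}$ is infinite.
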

\begin{proof}
Let $p_n(x)=x^{2\cdot3^n}+x^{3^n}+1\in k$ for $n\in\mathbb{N}$. Then each $p_n(x)$ is irreducible over $\mathbb{F}_2$ by~\cite[Ex.~3.96]{Lidl-Niederreiter-finitefields-book}. 
Let $u_n= \overline{\left[\begin{array}{cc}
                        0 & p_n(x) \\
                        1 & 0 \\
                         \end{array}  
                        \right]}$. 
It is clear that $u_n$ is a unipotent element of order $2$. Let $U_n:=\langle u_n \rangle$. Let $U_n$ act on $\mathbb{P}^1_k$ in the usual way. Since no eigenvalue of $u_n$ belongs to $k$, there is no non-trivial $k$-defined $U_n$-invariant proper subspace of $\mathbb{P}^1_k$. Thus $u_n$ is $k$-anisotropic.  

Suppose that $u_i$ is $PGL_2(k)$-conjugate to $u_j$ for some $j\neq i$. Then there exist $m\in GL_2(k)$ and $d\in k$ such that $m u_j m^{-1} =  \left[\begin{array}{cc}
                        d & 0 \\
                        0 & d \\
                         \end{array}  
                        \right] u_i$. Taking determinants on both sides, we obtain $p_j(x)=d^2 p_i(x)$. Let $d:=d_1/d_2$ where $d_1, d_2\in \mathbb{F}_2[x]$ and $d_1, d_2$ have no nontrivial common factor in $\mathbb{F}_2[x]$. Then $p_j(x)d_2^2=p_i(x)d_1^2$. So $d_1^2$ divides $p_j(x)$, but this is a impossible unless $d_1=1$ since $p_j(x)$ is irreducible. If $d_1=1$, we have $p_j(x)d_2^2=p_i(x)$. Then $d_2=1$ by the same argument. This is a contradiction since $p_i(x)\neq p_j(x)$.  
\end{proof}

\begin{rem}
The following was pointed out by the referee: it happens very frequently for  a non-algebraically closed $k$ that a simple group $G$ has infinitely many $G(k)$-conjugacy classes of unipotent elements (as an easy argument using the Galois cohomology shows). It is known that if a reductive group $G$ is defined over an local field $k$ and if the characteristic of $k$ is good for $G$, there are finitely many $G(k)$-conjugacy classes of unipotent elements. 
\end{rem}

\section*{Acknowledgements}
This research was supported by Marsden Grant UOA1021. The author would like to thank Benjamin Martin, Don Taylor, Philippe Gille, and Brian Conrad for helpful discussions. He is also grateful for detailed comments from an anonymous referee. 
\bibliography{mybib}

\end{document}